\documentclass[12pt]{amsart}
\usepackage{amsmath}
\usepackage{amsfonts}
\usepackage{bbm}
\usepackage[all]{xy}
\usepackage{bbding}
\usepackage{txfonts}
\usepackage{amscd}
\usepackage{mathrsfs}

\usepackage{xspace}
\usepackage[T1]{fontenc}
\usepackage[shortlabels]{enumitem}
\allowdisplaybreaks[4]
\usepackage{ifpdf}
\ifpdf
\usepackage[colorlinks,final,backref=page,hyperindex]{hyperref}
\else
\usepackage[colorlinks,final,backref=page,hyperindex]{hyperref}
\fi
\usepackage{tikz}
\usepackage[active]{srcltx}

\newtheorem{thm}{Theorem}[section]

\newtheorem{cor}[thm]{Corollary}
\newtheorem{pro}[thm]{Proposition}
\newtheorem{ex}[thm]{Example}
\newtheorem{rmk}[thm]{Remark}
\newtheorem{defi}[thm]{Definition}

\newcommand {\emptycomment}[1]{}

\newcommand{\be }{\begin{equation}}
	\newcommand{\ee }{\end{equation}}

\newcommand{\Nat}{\mathbb Z_+}

\newcommand{\huaO}{\mathcal{O}}

\newcommand{\g}{\mathfrak g}

\newcommand{\frkB}{\mathfrak B}

\newcommand{\Id}{\rm{Id}}

\newcommand{\br}[1]{   [ \cdot,    \cdot  ]   }
\newcommand{\id}{\mathrm{id}}

\newcommand{\Hom}{\mathrm{Hom}}

\newcommand{\gl}{\mathfrak {gl}}

\newcommand{\ad}{\mathrm{ad}}

\begin{document}
	\title[Hyper relative differential operators on Lie algebras]{Hyper relative differential operators on Lie algebras}

	\author{Sofiane Bouarroudj}
\address{Division of Science and Mathematics, New York University Abu Dhabi, P.O. Box 129188, Abu Dhabi, United Arab Emirates.}
\email{sofiane.bouarroudj@nyu.edu}

\author{Jiefeng Liu}
	\address{School of Mathematics and Statistics, Northeast Normal University, Changchun 130024, China}
	\email{liujf534@nenu.edu.cn}
	
\author{Liwen Zhang}
	\address{School of Mathematics and Statistics, Northeast Normal University, Changchun 130024, China}
	\email{zhangliwen100@nenu.edu.cn}
	
	\begin{abstract}
	In this paper, we first introduce the notion of a hyper relative differential operator on a Lie algebra, in which  Nijenhuis operators are used to characterize the relative differential operators and their inverse. We then introduce the notions of DN-structures,  KN-structures, and KD-structures on Lie algebras and study the relationships between DN-structures, KD-structures, KN-structures, and hyper relative differential operators. Finally, we investigate hyper symplectic structures and hyper Hessian structures from the view point of hyper relative differential operators, and provide equivalent descriptions for both hyper symplectic structures and hyper Hessian structures.
	\end{abstract}
	
	\subjclass[2020]{17B38, 17B60, 53C30}
	
	\keywords{relative differential operator, hyper relative differential operator, hyper symplectic structure, hyper Hessian structure}

	\maketitle
	
	\tableofcontents
	\section{Introduction}
Hyper K\"ahler structures on manifolds first appeared in Berger's classification of holonomy groups of Riemannian manifolds, and they are closely related to Yau's solution to the Calabi conjecture, gauge theory, and Rozansky–Witten theory. For further details and references on hyper K\"ahler structures, see \cite{Hit2}. In pursuit of investigating hyper K\"ahler structures on manifolds from a symplectic geometric perspective, Xu naturally introduced the concept of hyper symplectic structures in \cite{Xu}. More precisely, a hyper K\"ahler structure on a manifold $M$ consists of three symplectic structures $\omega_1$, $\omega_2$, and $\omega_3$ satisfying $\big((\omega^\natural_j)^{-1}\circ \omega_i^{\natural}\big)^2=-\Id$ for $i\neq j$. Analogous to hyper K\"ahler structures in manifolds, Hitchin introduced the notion of a para-hyper K\"ahler structure (also called a hyper symplectic structure) on manifold in \cite{Hit}. Due to its important applications in string theory, integrable systems, and differential geometry, para-hyper K\"ahler structures have become an important object in mathematics and mathematical physics. For further details and references on para-hyper K\"ahler structures, see \cite{DS}. Similar to the definition of hyper K\"ahler structures on manifolds given by Xu, a para-hyper K\"ahler structure on a manifold $M$ consists of three symplectic structures $\omega_1$, $\omega_2$, and $\omega_3$ satisfying $\big((\omega^\natural_3)^{-1}\circ \omega_2^{\natural}\big)^2=\Id$, $\big((\omega^\natural_1)^{-1}\circ \omega_3^{\natural}\big)^2=\Id$ and $\big((\omega^\natural_2)^{-1}\circ \omega_1^{\natural}\big)^2=-\Id$. The authors in  \cite{AN13} introduced and studied hyper symplectic structures on Lie algebroids, including hyper K\"ahler structures and para-hyper K\"ahler structures as special cases. If the manifold is a Lie group $G$ and the symplectic structures are left-invariant, hyper K\"ahler structures and para-hyper K\"ahler structures are completely determined by their corresponding hyper K\"ahler structures and para-hyper K\"ahler structures on the Lie algebra $\g$ of $G$.  See \cite{Andrada05,Andrada06,BB,BDF,BS} for more details on hyper K\"ahler Lie algebras and  para-hyper K\"ahler Lie algebras.

Let $\mathfrak{g}$ be a Lie algebra and let $(V;\rho)$ be a representation. A linear map $d:\mathfrak{g} \rightarrow V$ is called a relative differential operator if it satisfies
$$d[x,y]=\rho(x)d(y)-\rho(y)d(x),\quad \text{ for all } x,y\in \g.$$ When the representation is the adjoint representation, a relative differential operator reduces to an ordinary derivation on the Lie algebra. See \cite{CGKZ,GK08,GLSZ,JS} for more details on relative differential operators on Lie algebras and associative algebras. Relative differential operators are closely related to symplectic structures on Lie algebras. More precisely, if $\omega$ is a symplectic structure on $\mathfrak{g}$, then $\omega^\natural$ is a relative differential operator with respect to the coadjoint representation $(\mathfrak{g}^\ast; \operatorname{ad}^\ast)$, defined by $\langle \omega^\natural(x), y \rangle = \omega(x, y)$ for $x, y \in \mathfrak{g}$. Since the definitions of hyper Kähler Lie algebras and para-hyper Kähler Lie algebras rely fundamentally on compositions of three symplectic structures, it is natural to extend these constructions to the setting of relative differential operators. To this end, we introduce the notion of an $\varepsilon$-hyper differential operator on a Lie algebra, which consists of three invertible relative differential operators $d_1, d_2, d_3$ such that 
$$\big(T_{i-1}\circ d_{i+1})^2=\varepsilon_i\Id_\g,\quad \varepsilon_i=\pm 1,$$
where $T_i=d^{-1}_i$ for $i=1,2,3$. In particular, a hyper Kähler Lie algebra gives rise to an $\varepsilon$-hyper differential operator with $\varepsilon=(-1,-1,-1)$, and a para-hyper Kähler Lie algebra gives rise to  an $\varepsilon$-hyper differential operator with $\varepsilon=(1,1,-1)$ (see Proposition \ref{pro:hyper-kahler}). 

Let $(d_1,d_2,d_3)$ be an $\varepsilon$-hyper relative differential operator on a Lie algebra $\g$. Then $N_i:=T_{i-1}\circ d_{i+1}$, for $i\in\mathbb{Z}_3$, are Nijenhuis operators on the Lie algebra $\g$ (see Proposition \ref{pro:Nij}). We refer to  \cite{Dorfman} for more details on Nijenhuis operators and their applications. In particular, if $\varepsilon_{i}=-1$, then $N_i$ is a complex structure on $\g$, while if $\varepsilon_{i}=1$, then $N_i$ is a para-complex structure on $\g$. For more details on complex and para-complex structures on Lie algebras, see \cite{Andrada}.
 
Relative differential operators are also closely related to $\huaO$-operators on Lie algebras. If a relative differential operator is invertible, then its inverse is an $\huaO$-operator. The notion of an $\huaO$-operator was introduced by Kupershmidt in \cite{Kuper1} as a tool to better understand the interrelation between the classical Yang-Baxter equation and related integrable systems. In particular, a Rota-Baxter operator as given by Semonov-Tian-Shansky (\cite{STS}) is precisely an $\huaO$-operator on a Lie algebra $\g$ with respect to the adjoint representation $(\g;\ad)$, while a skew-symmetric solution of the classical Yang-Baxter equation is an $\huaO$-operator with respect to the coadjoint representation $(\g^*;\ad^*)$. Analogous to the Poisson–Nijenhuis structures in \cite{Kosmann1}, the authors in \cite{HLS} introduced the notion of a Kupershmidt-dual-Nijenhuis structure (KN-structure), which consists of an $\huaO$-operator and a dual-Nijenhuis pair satisfying certain compatibility conditions. In this paper, by adding compatibility conditions between a relative differential operator and an $\huaO$-operator, we introduce the notion of a KD-structure on a Lie algebra, and by adding compatibility conditions between a relative differential operator and a Nijenhuis operator, we introduce the notion of a DN-structure on a Lie algebra. The relationships among KD-structures, DN-structures, and KN-structures are studied thoroughly. More notably, these structures can be constructed using hyper relative differential operators.

Another interesting example of relative differential operators arises from Hessian structures on pre-Lie algebras. A Hessian structure on a pre-Lie algebra corresponds to an affine Lie group $G$ equipped with a $G$-invariant Hessian structure. Hessian structures on manifolds are related to K\"ahlerian geometry, symplectic geometry, affine differential geometry, and information geometry (\cite{Shima07}). For example, the tangent bundle of a Hessian manifold naturally produces a K\"ahlerian structure, and many important families of probability distributions, such as exponential families, admit Hessian structures. Recall that a Hessian structure on a pre-Lie algebra $(\g, \cdot)$ is a nondegenerate symmetric bilinear form $\mathscr{B}:\g\otimes\g\rightarrow \mathbb{R}$ satisfying
$$\mathscr{B}(x\cdot y,z)-\mathscr{B}(x,y\cdot z)=\mathscr{B}(y\cdot x,z)-\mathscr{B}(y,x\cdot z),\quad \text{ for all } x,y,z\in\g.$$
Furthermore, the nondegenerate symmetric bilinear form $\mathscr{B}$ defines a Hessian structure on the pre-Lie algebra $\g$ if and only if $\mathscr{B}^\natural$ is a relative differential operator on the Lie algebra $\g^c$ with respect to the coregular representation $(\g^*;L^*)$, where $\mathscr{B}^\natural:\g\rightarrow\g^*$ is given by $\langle\mathscr{B}^\natural(x),y\rangle=\mathscr{B}(x,y)$ for $x,y\in\g$ and the coregular representation $L^*$ of $\g^c$ on $\g^*$ is given by $\langle L^*_x \xi,y\rangle=-\langle \xi,x\cdot y\rangle$ for $\xi\in\g^*$.  As an application of relative differential operators on Lie algebras, we introduce the notion of a hyper Hessian structure on a pre-Lie algebra $\g$, which consists of three Hessian structures $\frkB_1$, $\frkB_2$ and $\frkB_3$ such that $(\frkB_1^\natural,\frkB_2^\natural,\frkB_3^\natural)$ is a hyper relative differential operator on the Lie algebra $\g^c$ with respect to the coregular representation. Moreover, the relationships between hyper anti-K\"ahler structures, para-hyper anti-K\"ahler structures, and hyper Hessian structures are well studied.

The paper is organized as follows. In Section \ref{sec:prel}, we introduce the notion of a hyper differential operator on a Lie algebra and show that it gives rise to three Nijenhuis operators on the Lie algebra via compositions of relative differential operators and their inverses. We also analyze the basic properties of hyper differential operators. In Section \ref{sec:compatible}, we introduce the notions of DN-structures and KD-structures on Lie algebras, and show that hyper relative differential operators naturally induce such structures. We then recall KN-structures on Lie algebras and study the relationships among DN-structures, KD-structures, and KN-structures. In particular, we prove that hyper relative differential operators also give rise to KN-structures. In Section \ref{sec:2cases}, we study two kinds of $\varepsilon$-hyper relative differential operators, namely those with $\varepsilon_1\varepsilon_2\varepsilon_3=1$ and those with $\varepsilon_1\varepsilon_2\varepsilon_3=-1$. More compatible structures and equivalent descriptions of hyper relative differential operators are provided. In Section \ref{sec:hyper2}, we apply hyper differential operators on Lie algebras to study hyper symplectic structures and hyper Hessian structures. In particular, we show that a hyper symplectic structure is equivalent to a hyper differential operator on the Lie algebra for which the differential operators are skew-symmetric, while a hyper Hessian structure is equivalent to a hyper differential operator on the pre-Lie algebra for which the differential operators are symmetric.
	
	\section{Hyper relative differential operators on Lie algebras} \label{sec:prel}
In this section, we first recall the definition of a relative differential operator on a Lie algebra, defined as a $1$-cocycle on the Lie algebra with values in a representation. We then  recall the relationships  between relative differential operators, $\huaO$-operators, symplectic structures and Hessian structures. Finally, we introduce and develop the notion of \emph{hyper relative differential operators} and analyze their properties. 
\begin{defi}
Let $(\g,[\cdot,\cdot])$ be a Lie algebra and $\rho:\g\rightarrow \gl(V)$ be a representation of $\g$ on $V$. A linear map $d:\g\rightarrow V$ is called a \emph{relative differential operator} if $d$ satisfies
\begin{equation}\label{eq:RDO}
d[x,y]=\rho(x)d(y)-\rho(y)d(x),\quad \text{ for all } x,y\in \g.
\end{equation}
\end{defi}
This operator is the usual derivation on $\g$ in the case where $\rho$ is the adjoint representation. Moreover, since the $1$-st cohomology group of a  semisimple Lie algebra $\g$ is trivial, the set of relative differential operators on $\g$ is given by $$\{d\in \Hom(\g,V)\mid \exists~ u\in V\, \text{ with }\; d =\rho(\cdot )\, u\}.$$ 
There are no invertible relative differential operators on semisimple Lie algebras.

Let $(\g,[\cdot,\cdot])$ be a Lie algebra and $\rho:\g\rightarrow \gl(V)$ be a representation of $\g$ on $V$. Following \cite{Kuper1}, an \emph{$\huaO$-operator} is a linear map $T:V\rightarrow\g$ satisfying
\begin{equation}\label{eq:RRB}
[Tu,Tv]=T(\rho(Tu)v-\rho(Tv)u),\quad \text{ for all } u,v\in V.
\end{equation}

The following proposition demonstrates the relationship between relative differential operators and $\huaO$-operators.
\begin{pro}
Let $d:\g\rightarrow V$ be an invertible linear map. Then $d:\g\rightarrow V$ is a relative differential operator with respect to the representation $(V;\rho)$ if and only if $d^{-1}:V\rightarrow \g$ is an $\huaO$-operator with respect to the representation $(V;\rho)$.
\end{pro}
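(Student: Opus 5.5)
The plan is a direct substitution argument. Write $T:=d^{-1}:V\rightarrow\g$, which exists because $d$ is a linear bijection. Then $d$ and $T$ set up a bijective correspondence $x=Tu$, $y=Tv$ between pairs $(x,y)\in\g\times\g$ and pairs $(u,v)\in V\times V$, with $u=d(x)$ and $v=d(y)$. I will show that, under this correspondence, identity \eqref{eq:RDO} for the pair $(x,y)$ is equivalent to identity \eqref{eq:RRB} for the pair $(u,v)$.

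For the forward direction, assume $d$ satisfies \eqref{eq:RDO}. Fix $u,v\in V$ and apply \eqref{eq:RDO} to $x=Tu$, $y=Tv$. Using $dT=\Id_V$, this gives
$$d[Tu,Tv]=\rho(Tu)\,dTv-\rho(Tv)\,dTu=\rho(Tu)v-\rho(Tv)u.$$
Applying $T$ to both sides and using $Td=\Id_\g$ yields $[Tu,Tv]=T(\rho(Tu)v-\rho(Tv)u)$, which is exactly \eqref{eq:RRB}.

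For the converse, assume $T$ satisfies \eqref{eq:RRB}. Fix $x,y\in\g$ and take $u=d(x)$, $v=d(y)$, so that $Tu=x$ and $Tv=y$. Then \eqref{eq:RRB} reads $[x,y]=T(\rho(x)d(y)-\rho(y)d(x))$. Applying $d$ to both sides and using $dT=\Id_V$ gives \eqref{eq:RDO}.

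There is no real obstacle here. The only point needing care is that invertibility is used in both directions: $Td=\Id_\g$ and $dT=\Id_V$ are each needed. This is what lets every pair in $V\times V$ be written as $(d(x),d(y))$, and every pair in $\g\times\g$ as $(Tu,Tv)$, so that each identity holds for all of its arguments and not just on an image.
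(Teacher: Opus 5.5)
Your proof is correct and complete. The paper states this proposition without proof, and your direct substitution $x=Tu$, $y=Tv$ (equivalently $u=d(x)$, $v=d(y)$), followed by applying $T$ or $d$ to both sides and using $Td=\Id_\g$, $dT=\Id_V$, is exactly the routine verification the paper leaves implicit.
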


Recall that a \emph{symplectic structure}\footnote{This structure is sometimes  refer to as quasi-Frobenuis, see \cite{BM}} on a Lie algebra $\g$ is a nondegenerate $2$-form $\omega\in \wedge^2\g^*$ satisfying
\begin{equation}
\omega([x,y],z)+\omega([z,x],y)+\omega([y,z],x)=0,\quad \text{ for all } \, x,y,z\in \g.
\end{equation}
This means that $\omega$ is a 2-cocycle on $\g$ with values in $\mathbb K$. Such structures exist only in even dimensions. For classifications in low dimensions, see \cite{MR2}. 
\begin{ex}{\rm
Let $\omega$ be a nondegenerate $2$-form on a Lie algebra $\g$. Define $\omega^\natural:\g\rightarrow\g^*$ by 
\begin{equation}\label{eq:sym-diff}
\langle\omega^\natural(x),y\rangle=\omega(x,y),\quad x,y\in \g.
\end{equation}
Then $\omega$ is a symplectic structure on the Lie algebra $\g$ if and only if $\omega^\natural$ is a relative differential operator with respect to the coadjoint representation $(\g^*;\ad^*)$, or equivalently,  $(\omega^\natural)^{-1}$ is an $\huaO$-operator with respect to $(\g^*;\ad^*)$.}
\end{ex}
Recall that a \emph{pre-Lie algebra} is a vector space $\g$ equipped with a binary operation $\cdot:\g\otimes\g\rightarrow\g$ satisfying
\begin{equation}
(x\cdot y)\cdot z-x\cdot (y\cdot z)=(y\cdot x)\cdot z-y\cdot(x\cdot z),\quad \text{ for all } x,y,z\in\g.
\end{equation}
Pre-Lie algebras are also referred to as {\it left-symmetric} algebras in the literature; see \cite{Bai21,Bur3} and references therein.

Let $(\g,\cdot)$ be a pre-Lie algebra. The commutator $[x,y]_\g=x\cdot y-y\cdot x$ defines a Lie algebra $\g^c$, called the \emph{ sub-adjacent Lie algebra} of
$(\g,\cdot)$. In this case, $\g$ is called a \emph{compatible pre-Lie algebra structure} on the Lie algebra $\g^c$. Furthermore, the map $L:\g\longrightarrow \gl(\g)$ with $x\mapsto  L_x$, where $L_xy=x\cdot y$, for all $x,y\in \g$, defines a representation of the Lie algebra $\g^c$ on $\g$. This representation is called a  regular representation of $\g^c$, and its dual representation $(\g^*; L^*)$ is called a coregular representation. For further details, see \cite{Bai21} and the references therein.

Following \cite{Shima07}, a \emph{Hessian structure} on a pre-Lie algebra $(\g, \cdot)$ is a nondegenerate symmetric bilinear form $\mathscr{B}:\g\otimes\g\rightarrow \mathbb{R}$ satisfying
\begin{equation}
 \mathscr{B}(x\cdot y,z)-\mathscr{B}(x,y\cdot z)=\mathscr{B}(y\cdot x,z)-\mathscr{B}(y,x\cdot z),\quad \text{ for all } x,y,z\in\g.
\end{equation}
\begin{ex} {\rm We present several examples of Hessian structures on Lie algebras. The Lie algebras themselves are taken from \cite{Bur}, whereas the Hessian structures are computed here.

(i) Two-dimensional pre-Lie algebras were classified in \cite{Bur}. None of them admits a Hessian structure.

(ii) The pre-Lie algebra $I_4$ is defined, with respect to the basis $\{e_1, e_2, e_3, e_4\}$, by: 
\[
e_1\cdot e_1= 2 \, e_1,\quad  e_1\cdot e_i=e_i,\,  \text{ for } i=1,\cdots,4, \quad e_j\cdot e_j=e_1, \, \text{ for } j=2,3,4.
\]
The Hessian structure is given by
\[
\mathscr{B}= 2 e_1^* \otimes e_1^* + e_2^* \otimes e_2^* + e_3^* \otimes e_3^*+ e_4^* \otimes e_4^*.
\]
(iii) The pre-Lie algebra $B_4$  defined, with respect to the basis $\{e_1, e_2, e_3, e_4\}$, by 
\[
e_1\cdot e_2= e_2 \cdot e_1=e_4,\;\;  e_2 \cdot e_3=2 \, e_1, \;\;   e_3\cdot e_2=e_1, \;\; e_4 \cdot e_2=-e_2, \;\;  e_4\cdot e_3=e_3,\;\;  e_4 \cdot e_4=-e_4.
\]
doesn't have a Hessian structure.

(iv) The pre-algebra $A_4$ is defined, with respect to the basis $\{e_1, e_2, e_3, e_4\}$, by 
\[
e_1\cdot e_i=\alpha_i \, e_i,\quad e_j\cdot e_{6-j}=e_1, \quad \text{for } \, i=1,\ldots,4\,  \text{ and } j=2,3,4,
\]
where $\alpha_3=\frac{1}{2}\alpha_1, \, \alpha_4=\alpha_1-\alpha_2$ are constant. The Hessian structure is given by
\[
\mathscr{B}= \alpha_1 e_1^* \otimes e_1^* + e_2^*\otimes e_4^* + e_4^*\otimes e_2^* + e_3^* \otimes e_3^*.
\]
}
\end{ex}
\begin{pro}
Let $\mathscr{B}$ be a nondegenerate symmetric bilinear form on a pre-Lie algebra $\g$. Define $\mathscr{B}^\natural:\g\rightarrow\g^*$ by 
\begin{equation}\label{eq:hess-diff}
\langle\mathscr{B}^\natural(x),y\rangle=\mathscr{B}(x,y),\quad \text{ for every } \, x,y\in \g.
\end{equation}
Then $\mathscr{B}$ is a Hessian structure on the pre-Lie algebra $\g$ if and only if  $\mathscr{B}^\natural$ is a relative differential operator on the Lie algebra $\g^c$ with respect to the coregular representation $(\g^*;L^*)$, or equivalently, $(\mathscr{B}^\natural)^{-1}$ is an $\huaO$-operator on the Lie algebra $\g^c$ with respect to $(\g^*;L^*)$.
\end{pro}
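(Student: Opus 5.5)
The plan is a direct computation that unwinds both sides of the relative differential operator condition \eqref{eq:RDO} for $d=\mathscr{B}^\natural$, $V=\g^*$, $\rho=L^*$, by pairing with an arbitrary $z\in\g$. The second equivalence, with $(\mathscr{B}^\natural)^{-1}$, then follows at once from the earlier proposition relating invertible relative differential operators and $\huaO$-operators, since $\mathscr{B}^\natural$ is invertible by nondegeneracy of $\mathscr{B}$.

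For the first equivalence, fix $x,y,z\in\g$. The left-hand side of \eqref{eq:RDO} pairs to
\[
\langle \mathscr{B}^\natural[x,y]_\g,z\rangle=\mathscr{B}(x\cdot y,z)-\mathscr{B}(y\cdot x,z).
\]
For the right-hand side I use the definition $\langle L^*_x\xi,z\rangle=-\langle\xi,x\cdot z\rangle$, which gives
\[
\langle L^*_x\mathscr{B}^\natural(y)-L^*_y\mathscr{B}^\natural(x),z\rangle=-\mathscr{B}(y,x\cdot z)+\mathscr{B}(x,y\cdot z).
\]
Equating the two sides for all $z$ gives
\[
\mathscr{B}(x\cdot y,z)-\mathscr{B}(y\cdot x,z)=\mathscr{B}(x,y\cdot z)-\mathscr{B}(y,x\cdot z),
\]
which is a rearrangement of the Hessian identity
\[
\mathscr{B}(x\cdot y,z)-\mathscr{B}(x,y\cdot z)=\mathscr{B}(y\cdot x,z)-\mathscr{B}(y,x\cdot z).
\]
Since $z$ is arbitrary and the pairing between $\g$ and $\g^*$ is nondegenerate, the two conditions are equivalent.

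Symmetry of $\mathscr{B}$ enters only in how I write the pairings: $\langle\mathscr{B}^\natural(y),x\cdot z\rangle=\mathscr{B}(y,x\cdot z)$ already matches the convention \eqref{eq:hess-diff}, so symmetry is not really needed, apart from identifying $\mathscr{B}(y,\cdot)$ consistently.

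The main point needing care is the sign convention of the coregular representation. I will also note briefly that $L^*$ is indeed a representation of $\g^c$. This holds because $L$ is one, by the pre-Lie identity, and duals of representations are representations. No real obstacle is expected.
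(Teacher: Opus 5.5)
Your proposal is correct and is essentially the paper's own proof: both pair $\mathscr{B}^\natural[x,y]_\g-L^*_x\mathscr{B}^\natural(y)+L^*_y\mathscr{B}^\natural(x)$ against an arbitrary $z\in\g$, recognize the Hessian identity, and get the $\huaO$-operator equivalence from the earlier proposition on inverses of invertible relative differential operators. Your remark that the symmetry of $\mathscr{B}$ is not actually used in this computation is also accurate.
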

\begin{proof}
By a direct calculation, we have
\begin{eqnarray*}
&&\mathscr{B}(x\cdot y,z)-\mathscr{B}(x,y\cdot z)-\mathscr{B}(y\cdot x,z)+\mathscr{B}(y,x\cdot z)\\
&=&\langle\mathscr{B}^\natural([x,y]),z\rangle-\langle\mathscr{B}^\natural(x),y\cdot z\rangle+\langle\mathscr{B}^\natural(y),x\cdot z\rangle\\
&=&\langle\mathscr{B}^\natural([x,y])-L_x^*\mathscr{B}^\natural(y)+L_y^*\mathscr{B}^\natural(x),z\rangle,
\end{eqnarray*}
which implies that $\mathscr{B}$ is a Hessian structure on the pre-Lie algebra if and only if  $\mathscr{B}^\natural$ is a relative differential operator with respect to the coregular representation $(\g^*;L^*)$.
\end{proof}
Now, we are ready to introduce our main definition. 
\begin{defi}
Let $d_1,d_2$ and $d_3$ be invertible relative differential operators with respect to the representation $(V;\rho)$. Set $T_i:=d_i^{-1}$ and $N_i:=T_{i-1}\circ d_{i+1}$, for $i\in\mathbb{Z}_3$. If $N_i$ satisfies $N_i^2=\varepsilon_i\Id_\g$ with $\varepsilon_i=\pm 1$ for $i=1,2,3$, then the triple
$(d_1,d_2,d_3)$ is called an \emph{$\varepsilon=(\varepsilon_1,\varepsilon_2,\varepsilon_3)$-hyper relative differential operator}. 
\end{defi}


\begin{ex}{\rm  Consider the Lie algebra $L$ given in the basis $\{e_1,\ldots,e_4\}$ by
 \[
 [e_1,e_2]=e_2 , \quad [e_1, e_3]=-e_3, \quad [e_1, e_4]=e_4.
 \]
 There are three symplectic structures on $L$ given by 
 \[
 \begin{array}{lcl}
 \omega_1 & = & e_1^*\wedge e_2^*+ e_1^*\wedge e_3^*+ e_3^*\wedge e_4^*,\\[2mm]
 \omega_2 & = &  e_1^*\wedge e_4^*+ e_2^*\wedge e_3^*,\\[2mm]
 \omega_3 & = & -e_1^*\wedge e_2^*+ e_1^*\wedge e_3^*+  e_3^*\wedge e_4^*.
 \end{array}
 \]
The triple $(\omega^\natural_1, \omega^\natural_2, \omega^\natural_3)$ yields a $(1,1,-1)$-hyper relative differential operator.
}
\end{ex}

Following \cite{Dorfman},  a \emph{Nijenhuis operator} on a Lie algebra $\g$ is a linear map $N:\g\longrightarrow\g$ satisfying
\begin{equation}\label{eq:Nij}
  [N(x),N(y)]=N\big([N(x),y]+[x,N(y)]-N[x,y]\big),\quad \text{ for all } x,y\in\g.
\end{equation}
Furthermore, the deformed bracket $[-,-]_N:\wedge^2\g\longrightarrow\g$ given by
\begin{equation}\label{eq:deformbracket}
  [x,y]_N:=[N(x),y]+[x,N(y)]-N([x,y]),
\end{equation}
 is a Lie bracket and $N$ is a Lie algebra morphism from $(\g,[\cdot,\cdot]_N)$ to $(\g,[\cdot,\cdot])$.

In particular, the Nijenhuis operator $N$ satisfying $N^2=-\Id_\g$  is called a \emph{complex  structure}, while the Nijenhuis operator $N$ satisfying $N^2=\Id_\g$  is called a \emph{para-complex  structure}.

In \cite{Win}, the author showed that the pair $(\g,\cdot)$ is a real pre-Lie algebra if and only if the right multiplication on $\g$ is a linear Nijenhuis operator on the real affine space $\g$ (it is not a Nijenhuis operator on the sub-adjacent Lie algebra of $\g$). 


\begin{pro}\label{pro:Nij}
Let $(d_1,d_2,d_3)$ be an $\varepsilon$-hyper relative differential operator on the Lie algebra $\g$. For any $i\in\mathbb{Z}_3$, the operator $N_i:=T_{i-1}\circ d_{i+1}$,  is a Nijenhuis operator on $\g$. In particular, if $\varepsilon_{i}=-1$, then $N_i$ is a complex structure on  $\g$, while if $\varepsilon_{i}=1$, then $N_i$ is a para-complex structure on $\g$.
\end{pro}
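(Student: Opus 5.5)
The plan is to show a general lemma first: if $d,d':\g\to V$ are relative differential operators with $d$ invertible, then $N:=d^{-1}\circ d'$ is a Nijenhuis operator on $\g$. Proposition \ref{pro:Nij} then follows by taking $d=d_{i-1}$, $d'=d_{i+1}$. The second assertion is immediate from the definition of an $\varepsilon$-hyper relative differential operator: $N_i^2=\varepsilon_i\Id_\g$, so $N_i$ is a complex structure when $\varepsilon_i=-1$ and a para-complex structure when $\varepsilon_i=1$.

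For the lemma, write $T=d^{-1}$, which by the earlier proposition is an $\huaO$-operator with respect to $(V;\rho)$. Put $u=d'(x)$ and $v=d'(y)$, so $N(x)=Tu$ and $N(y)=Tv$. The $\huaO$-operator identity gives
\[
[N(x),N(y)]=T\big(\rho(N(x))d'(y)-\rho(N(y))d'(x)\big).
\]
It then suffices to show
\[
\rho(N(x))d'(y)-\rho(N(y))d'(x)=d'\big([N(x),y]+[x,N(y)]-N[x,y]\big),
\]
because applying $T$ to the right-hand side gives $N([x,y]_N)$. Expanding with the cocycle condition for $d'$:
\[
d'[N(x),y]=\rho(N(x))d'(y)-\rho(y)d'(N(x)),\qquad d'[x,N(y)]=\rho(x)d'(N(y))-\rho(N(y))d'(x).
\]
The difference between the two sides is therefore
\[
\rho(x)d'(N(y))-\rho(y)d'(N(x))-d'N[x,y].
\]

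To see that this vanishes, use the identity $d\circ N=d'$, which is the definition of $N$. Rewriting $d'N=d'T d'$, the needed identity becomes
\[
d'T\big(d'[x,y]\big)=\rho(x)d'Td'(y)-\rho(y)d'Td'(x).
\]
The step I expect to be the main obstacle is that this does not follow from the two cocycle conditions alone. It would follow if $d'Td'$ were itself a cocycle, which fails in general. So I would first try to use $N$ more cleverly: apply the cocycle property of $d$ to $[N x,N y]$ and express everything through $d(Nx)=d'(x)$.

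Concretely, $d[Nx,Ny]=\rho(Nx)d'(y)-\rho(Ny)d'(x)$, and the Nijenhuis condition reads $d[Nx,Ny]=d'([x,y]_N)$. By the expansion above, this reduces to
\[
\rho(x)\,d'N(y)-\rho(y)\,d'N(x)=d'N[x,y],
\]
that is, to $d'\circ N$ being a relative differential operator. In the hyper setting this holds: $d_{i+1}N_i=d_{i+1}T_{i-1}d_{i+1}$, and using $N_j^2=\pm\Id$ for the other indices one rewrites this composite as a scalar multiple of a $d_k$. From $N_{i}^2=\varepsilon_i\Id$ we get $d_{i+1}T_{i-1}d_{i+1}=\varepsilon_i d_{i-1}$, which is a relative differential operator.

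So the key step is to prove $d_{i+1}N_i=\varepsilon_i d_{i-1}$ from $N_i^2=\varepsilon_i\Id$: applying $d_{i-1}$ to $T_{i-1}d_{i+1}T_{i-1}d_{i+1}=\varepsilon_i\Id$ gives $d_{i+1}T_{i-1}d_{i+1}=\varepsilon_i d_{i-1}$. The required cocycle property of $d'N$ then holds, and the Nijenhuis identity for $N_i$ follows.
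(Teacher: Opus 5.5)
Your final argument is correct and is essentially the paper's proof. It uses exactly the same ingredients: the $\huaO$-operator identity for $T_{i-1}$, the cocycle conditions for $d_{i+1}$ and $d_{i-1}$, and $d_{i+1}T_{i-1}d_{i+1}=\varepsilon_i d_{i-1}$. You have packaged them as the reduction ``$N$ is Nijenhuis iff $d'\circ N$ is a relative differential operator,'' which is the KD$\Rightarrow$DN argument of Prop.~\ref{pro:hyper-KD-DN}. Drop the general lemma announced in your first paragraph: as you yourself noticed, it is false without the extra hypothesis that $d'\circ d^{-1}\circ d'$ is a relative differential operator.
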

\begin{proof}
By the fact that $d_i$ is a relative differential operator and $T_{i-1}$ is an $\huaO$-operator, the left-hand side of Eq. \eqref{eq:Nij} is equal to
\begin{align*}
[N_ix,N_iy]&=[T_{i-1}(d_{i+1}x),T_{i-1}(d_{i+1}y)]\\
&=T_{i-1}\Big(\rho\big(T_{i-1}(d_{i+1}x)\big)(d_{i+1}y)-\rho\big(T_{i-1}(d_{i+1}y)\big)(d_{i+1}x)\Big).
\end{align*}
Furthermore, since $N_i^2=\varepsilon_i\Id_\g$, we have $d_{i+1}\circ T_{i-1}\circ d_{i+1}=d_{i-1}\varepsilon_i$. Combining with the fact that $d_{i+1}$ is a relative differential operator, we have
\begin{align*}
  N_i([N_ix,y]) & =T_{i-1}\big(d_{i+1}[T_{i-1}(d_{i+1}x),y]\big) \\
  &=T_{i-1}\Big(\rho\big(T_{i-1}(d_{i+1}x)\big)(d_{i+1}y) -\rho(y)\big((d_{i+1}\circ T_{i-1}\circ d_{i+1})(x)\big)\Big) \\
  &=T_{i-1}\Big(\rho\big(T_{i-1}(d_{i+1}x)\big)(d_{i+1}y) -\varepsilon_i\rho(y)\big(d_{i-1}x\big)\Big).
\end{align*}
Similarly, we have
$$ N_i([x,N_iy])=T_{i-1}\Big(\varepsilon_i\rho(x)\big(d_{i-1}y\big)-\rho\big(T_{i-1}(d_{i+1}y)\big)(d_{i+1}x) \Big).$$
By the fact that $d_{i-1}$ is a relative differential operator and $T_{i-1}\circ d_{i-1}=\Id_\g$, the right-hand side of Eq. \eqref{eq:Nij} is equal to
\begin{align*}
&N_i\big([N_ix,y]+[x,N_iy]-N_i[x,y]\big)\\
&=T_{i-1}\Big(\rho\big(T_{i-1}(d_{i+1}x)\big)(d_{i+1}y) -\varepsilon_i\rho(y)\big(d_{i-1}x\big)\Big)\\
&\quad+T_{i-1}\Big(\varepsilon_i\rho(x)\big(d_{i-1}y\big)-\rho\big(T_{i-1}(d_{i+1}y)\big)(d_{i+1}x) \Big)-\varepsilon_i[x,y]\\
&=T_{i-1}\Big(\rho\big(T_{i-1}(d_{i+1}x)\big)(d_{i+1}y)-\rho\big(T_{i-1}(d_{i+1}y)\big)(d_{i+1}x)\Big)\\
&\quad+\varepsilon_iT_{i-1}\Big(\rho(x)\big(d_{i-1}y\big)-\rho(y)\big(d_{i-1}x\big)\Big)-\varepsilon_i[x,y]\\
&=T_{i-1}\Big(\rho\big(T_{i-1}(d_{i+1}x)\big)(d_{i+1}y)-\rho\big(T_{i-1}(d_{i+1}y)\big)(d_{i+1}x)\Big)+\varepsilon_iT_{i-1}(d_{i-1}[x,y])-\varepsilon_i[x,y]\\
&=[N_ix,N_iy].
\end{align*}
Thus, $N_i$ is a Nijenhuis operator on the Lie algebra $\g$.
\end{proof}


Define $h^\flat:\g\rightarrow V$ by
\begin{equation}\label{eq:hyper-metric}
h^\flat=\varepsilon_{3}\varepsilon_{2} \, d_{3}\circ T_{1}\circ d_{2}.
\end{equation}
This map will be useful in describing hyper relative differential operators in Section \ref{sec:2cases}. 
\begin{pro}\label{pro:important property}
Let $(d_1,d_2,d_3)$ be an $\varepsilon$-hyper relative differential operator on a Lie algebra $\g$. Then, $h^\flat$ satisfies
\begin{equation}\label{eq:hproperty1}
h^\flat=\varepsilon_{i-1}\varepsilon_{i+1} \, d_{i-1}\circ T_{i}\circ d_{i+1}.
\end{equation}
Furthermore, the relations between $h^\flat$, $d_i$ and $N_i$ are given by
\begin{align}
\label{eq:hdnproperty1} T_i\circ S_i&=\varepsilon_{1}\varepsilon_{2}\varepsilon_{3}\, N_i\circ T_i=\varepsilon_{i+1}(h^\flat)^{-1},\\
\label{eq:hdnproperty2} d_i\circ N_i&=\varepsilon_{1}\varepsilon_{2}\varepsilon_{3}\, S_i\circ d_i=\varepsilon_{i-1}h^\flat,\\
\label{eq:hdnproperty3}  h^\flat\circ N_i&=\varepsilon_{1}\varepsilon_{2}\varepsilon_{3}\, S_i\circ h^\flat=\varepsilon_{i}\varepsilon_{i-1}\, d_i,
\end{align}
where $S_i=d_{i+1}\circ T_{i-1}$ for $i\in \mathbb{Z}_{3}$.
\end{pro}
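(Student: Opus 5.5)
This is pure linear algebra: Lie brackets never enter, and everything follows from invertibility of the $d_i$ together with the three relations $N_i^2=\varepsilon_i\Id_\g$. First rewrite each relation as an identity among the $d$'s. From $N_i=T_{i-1}\circ d_{i+1}$ and $N_i^2=\varepsilon_i\Id_\g$, applying $d_{i-1}$ on the left gives
\begin{equation*}
d_{i+1}\circ T_{i-1}\circ d_{i+1}=\varepsilon_i\, d_{i-1},\qquad i\in\mathbb{Z}_3.
\end{equation*}
This identity was already used in the proof of Proposition \ref{pro:Nij}. Equivalently, $T_{i-1}\circ d_{i+1}=\varepsilon_i\, T_{i+1}\circ d_{i-1}$, so $N_i^{-1}=\varepsilon_i N_i=T_{i+1}\circ d_{i-1}$. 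Similarly $S_i^2=d_{i+1}T_{i-1}d_{i+1}T_{i-1}=\varepsilon_i\Id_V$. Throughout, $\varepsilon_j^2=1$ will be used freely.

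\textbf{Step 1: cyclic independence of $h^\flat$.} Set $H_i:=\varepsilon_{i-1}\varepsilon_{i+1}\, d_{i-1}T_id_{i+1}$. For $i=1$ this is $\varepsilon_3\varepsilon_2\, d_3T_1d_2=h^\flat$, so it suffices to show $H_1=H_2=H_3$. To compare $H_1$ with $H_2$, start from $H_1=\varepsilon_3\varepsilon_2\, d_3T_1d_2$. Substitute $d_3=\varepsilon_2\, d_1T_3d_1$, which is the displayed identity with $i=2$:
\begin{equation*}
d_3T_1d_2=\varepsilon_2\, d_1T_3d_1T_1d_2=\varepsilon_2\, d_1T_3d_2 .
\end{equation*}
Hence $H_1=\varepsilon_3\, d_1T_3d_2$. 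On the other hand $H_2=\varepsilon_1\varepsilon_3\, d_1T_2d_3$. It therefore remains to show $d_1T_3d_2=\varepsilon_1\, d_1T_2d_3$, that is, $T_3d_2=\varepsilon_1 T_2d_3$. This is exactly the displayed identity with $i=1$, namely $N_1=\varepsilon_1N_1^{-1}$. The equality $H_2=H_3$ follows by the same argument with indices shifted cyclically.

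\textbf{Step 2: the relations \eqref{eq:hdnproperty2}.} First, using $T_id_i=\Id_\g$, $T_{i+1}d_{i+1}=\Id_\g$ and Step 1,
\begin{equation*}
d_iN_i=d_iT_{i-1}d_{i+1}=\varepsilon_i\varepsilon_{i-1}H_{i+1}\cdot\varepsilon_{i-1}^{\,2}\ \text{(up to sign bookkeeping)}.
\end{equation*}
Concretely, $H_{i+1}=\varepsilon_i\varepsilon_{i-1}\, d_iT_{i+1}d_{i-1}$, so $d_iT_{i+1}d_{i-1}=\varepsilon_i\varepsilon_{i-1}h^\flat$. The map $d_iT_{i-1}d_{i+1}$ has the indices the other way round. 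To reach it, rewrite $T_{i-1}d_{i+1}=\varepsilon_iT_{i+1}d_{i-1}$, which is the displayed identity. This gives
\begin{equation*}
d_iN_i=\varepsilon_i\, d_iT_{i+1}d_{i-1}=\varepsilon_{i-1}h^\flat.
\end{equation*}
Next, $S_id_i=d_{i+1}T_{i-1}d_i=\varepsilon_{i+1}\varepsilon_{i}H_{i-1}\cdot(\pm)$. This again requires swapping $T_{i-1}d_i$ to $\varepsilon_{i+1}T_id_{i-1}$ via $N_{i+1}$. The result should be $S_id_i=\varepsilon_1\varepsilon_2\varepsilon_3\,\varepsilon_{i-1}h^\flat$.

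\textbf{Step 3: the relations \eqref{eq:hdnproperty1} and \eqref{eq:hdnproperty3}.} These follow by inverting and composing the relations of Step 2. From $d_iN_i=\varepsilon_{i-1}h^\flat$ we get $(h^\flat)^{-1}=\varepsilon_{i-1}N_i^{-1}T_i=\varepsilon_{i-1}\varepsilon_iN_iT_i$. Then the identity $\varepsilon_{i-1}\varepsilon_i=\varepsilon_1\varepsilon_2\varepsilon_3\,\varepsilon_{i+1}$ gives the middle equality of \eqref{eq:hdnproperty1}. Similarly, $T_iS_i=T_i(S_id_i)T_i$ is handled via Step 2. For \eqref{eq:hdnproperty3}, compute $h^\flat N_i=\varepsilon_{i-1}d_iN_i^2=\varepsilon_{i-1}\varepsilon_id_i$. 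Also $S_ih^\flat=\varepsilon_{i-1}S_id_iN_i$, and substituting $S_id_i$ from Step 2 finishes.

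\textbf{Main obstacle.} The only real difficulty is sign bookkeeping and choosing the right "index swap" identity at each step. I will organize everything around the single identity $T_{j-1}d_{j+1}=\varepsilon_jT_{j+1}d_{j-1}$, applied with the appropriate $j$.
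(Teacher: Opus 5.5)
Your proposal is correct and follows essentially the paper's route: derive the swap identity $T_{i-1}d_{i+1}=\varepsilon_iT_{i+1}d_{i-1}$ from $N_i^2=\varepsilon_i\Id_\g$, use it to show that the cyclic expressions for $h^\flat$ agree, and then obtain the remaining identities by composing and inverting. The hedged parts of Step 2 close at once. Step 1 already gives $H_i=\varepsilon_{i-1}d_iN_i$, and $S_id_i=d_{i+1}T_{i-1}d_i=\varepsilon_i\varepsilon_{i+1}H_{i-1}$ holds directly by definition, so no extra swap is needed; also, the first display of Step 2 carries a stray factor $\varepsilon_i$.
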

\begin{proof}
By the definition of $N_i$ and $N_i^2=\varepsilon_i\Id_\g$, we have
\begin{align}
\label{eq:dtproperty1} d_{i+1}\circ T_{i-1}=\varepsilon_id_{i-1}\circ T_{i+1},\\
\label{eq:dtproperty2} T_{i+1}\circ d_{i-1}=\varepsilon_iT_{i-1}\circ d_{i+1}.
\end{align}
By Eq. \eqref{eq:dtproperty1} and Eq. \eqref{eq:dtproperty2}, we obtain 
\begin{align*}
h^\flat&=\varepsilon_3\varepsilon_2 \, d_3\circ T_1\circ d_2=\varepsilon_2 \, d_3\circ T_2\circ d_1=\varepsilon_2\varepsilon_1 \, d_2\circ T_3\circ d_1\\
&=\varepsilon_1 \, d_2\circ T_1\circ d_3=\varepsilon_1\varepsilon_3d_1\circ T_2\circ d_3,
\end{align*}
which implies that  $h^\flat=\varepsilon_{i-1}\varepsilon_{i+1} \, d_{i-1}\circ T_{i}\circ d_{i+1}$ for any $i\in\mathbb{Z}_3$.

Furthermore, by Eq. \eqref{eq:dtproperty1}, we have
\begin{align*}
  \varepsilon_{i-1}h^\flat&=\varepsilon_{i+1}d_{i-1}\circ T_{i}\circ d_{i+1}=d_i\circ T_{i-1}\circ d_{i+1}=d_{i}\circ N_i.
\end{align*}
By Eq. \eqref{eq:hproperty1}, we have
\begin{align*}
(h^\flat)^{-1}&=(\varepsilon_{i-1}\varepsilon_{i+1}d_{i-1}\circ T_i\circ d_{i+1})^{-1}=\varepsilon_{i-1}\varepsilon_{i+1}T_{i+1}\circ d_i\circ T_{i-1}\\
&=\varepsilon_{i+1}\varepsilon_{i}T_i\circ d_{i-1}\circ T_{i+1}=\varepsilon_{i}\varepsilon_{i-1}T_{i-1}\circ d_{i+1}\circ T_{i}=\varepsilon_{i}\varepsilon_{i-1}N_i\circ T_i.
\end{align*}
By Eq. \eqref{eq:dtproperty1}, we have
\begin{align*}
T_i\circ S_i&=T_i\circ d_{i+1}\circ T_{i-1}=\varepsilon_iT_i\circ d_{i-1}\circ T_{i+1}\\
&=(\varepsilon_{i+1})^2\varepsilon_iT_i\circ d_{i-1}\circ T_{i+1}=\varepsilon_{i+1}(h^\flat)^{-1}\\
&=\varepsilon_{i+1}\varepsilon_{i}\varepsilon_{i-1}N_i\circ T_i=\varepsilon_1\varepsilon_2\varepsilon_3N_i\circ T_i.
\end{align*}
This proves Eq.  \eqref{eq:hdnproperty1}. The proofs of Eq.\eqref{eq:hdnproperty2} and Eq.\eqref{eq:hdnproperty3} are analogous.
\end{proof}
The following properties of hyper relative differential operators play an important role in the study of their relationships with KN-structures, KD-structures, and DN-structures in the subsequent sections.
\begin{pro}\label{pro:pro2.8}
Let $(d_1,d_2,d_3)$ be an $\varepsilon$-hyper relative differential operator on a Lie algebra $\g$. For all $i,j\in\mathbb{Z}_{3}$ such that $i\neq j$, we have
\begin{itemize}
\item[\rm(i)] $$T_i\circ S_j=N_j\circ T_i=\begin{cases}\varepsilon_{i+1}\, T_{i-1},&j=i+1,\\
T_{i+1},&j=i-1.\end{cases}$$
\item[\rm(ii)] $$d_i\circ N_j=S_j\circ d_i=\begin{cases}d_{i-1},&j=i+1,\\
\varepsilon_{i-1} \, d_{i+1},&j=i-1.\end{cases}$$
\item[\rm(iii)] $$N_i\circ N_j=\varepsilon_{1}\varepsilon_{2}\varepsilon_{3}\, N_j\circ N_i=\begin{cases}\varepsilon_{i}\varepsilon_{i+1}\, N_{i-1},&j=i+1,\\
\varepsilon_{i+1}N_{i+1},&j=i-1.\end{cases}$$
\item[\rm(iv)] $$S_i\circ S_j=\varepsilon_{1}\varepsilon_{2}\varepsilon_{3}\, S_j\circ S_i=\begin{cases}\varepsilon_{i}\varepsilon_{i+1}\, S_{i-1},&j=i+1,\\
\varepsilon_{i+1}\, S_{i+1},&j=i-1,\end{cases}$$
and $S_i^2=\varepsilon_i \, \Id_V$.
\item[\rm(v)] $$T_k\circ d_i=\begin{cases}\Id_\g,&k=i,\\
N_{i-1},&k=i+1,\\
\varepsilon_{i+1}\, N_{i+1},&k=i-1.\end{cases}$$
\item[\rm(vi)] $$d_i\circ T_k=\begin{cases}\Id_V,&k=i,\\
S_{i-1},&k=i+1,\\
\varepsilon_{i+1}\, S_{i+1},&k=i-1.\end{cases}$$
\end{itemize}
\end{pro}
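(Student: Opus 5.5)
Every object here is a composition of the invertible maps $d_k$ and $T_k=d_k^{-1}$. So I will derive everything from a few rewriting rules, without using the Lie or $\huaO$-operator structure at all. The basic inputs are the definitions $N_i=T_{i-1}\circ d_{i+1}$ and $S_i=d_{i+1}\circ T_{i-1}$. I will also use the identities \eqref{eq:dtproperty1} and \eqref{eq:dtproperty2} from the proof of Proposition \ref{pro:important property}, which are equivalent to $N_i^2=\varepsilon_i\Id_\g$. I will prove the items in the order (v), (vi), (i), (ii), (iv), (iii), since (v) and (vi) serve as the rewriting dictionary for the rest.

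For (v), the case $k=i$ is trivial. The case $k=i+1$ is literally the definition, since $N_{i-1}=T_{i-2}\circ d_i=T_{i+1}\circ d_i$ in $\mathbb{Z}_3$. For $k=i-1$, I apply \eqref{eq:dtproperty2} with $i$ replaced by $i+1$: it gives $T_{i+2}\circ d_i=\varepsilon_{i+1}T_i\circ d_{i+2}$, that is, $T_{i-1}\circ d_i=\varepsilon_{i+1}N_{i+1}$. Item (vi) is identical, using $S_{i-1}=d_i\circ T_{i+1}$ and \eqref{eq:dtproperty1} with index $i+1$.

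For (i) with $j=i+1$, both $T_i\circ S_{i+1}$ and $N_{i+1}\circ T_i$ equal $T_i\circ d_{i-1}\circ T_i$, so they agree by definition. Writing $N_{i+1}^2=\varepsilon_{i+1}\Id_\g$ as $(T_i d_{i-1}T_i)\,d_{i-1}=\varepsilon_{i+1}\Id_\g$ and cancelling $d_{i-1}$ gives $\varepsilon_{i+1}T_{i-1}$. With $j=i-1$, both sides collapse directly, because $T_i\circ d_i=\Id_\g$ and $d_i\circ T_i=\Id_V$ leave $T_{i+1}$. Item (ii) follows the same way, or by composing (i) with $d_i$ on both sides and using (v) and (vi).

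For (iv), I first note that $S_i=d_{i+1}\circ N_i\circ T_{i+1}$, since $N_i\circ T_{i+1}=T_{i-1}$. Hence $S_i^2=d_{i+1}N_i^2T_{i+1}=\varepsilon_i\Id_V$. More generally, all $S_j=d_{k}\circ N_j\circ T_{k}$ up to the signs supplied by (ii). So (iv) is the conjugate of (iii) under a fixed $d_k$, and it suffices to prove (iii).

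For (iii), I expand $N_i\circ N_j=T_{i-1}d_{i+1}T_{j-1}d_{j+1}$. For $j=i+1$, the inner pair $d_{i+1}\circ T_i$ is rewritten by (vi) as a signed $S$. Then (i) is applied to $T_{i-1}\circ S$, and finally (v) turns the remaining $T\circ d$ into the signed $N_{i-1}$. The reverse composition $N_{i+1}\circ N_i$ is treated in the same way. Comparing the two results should produce the factor $\varepsilon_1\varepsilon_2\varepsilon_3$, because one path uses each of the three sign rules once more than the other. The case $j=i-1$ is obtained by relabelling.

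The only real obstacle is the sign bookkeeping in (iii) and (iv), in particular checking that the commutation sign comes out exactly $\varepsilon_1\varepsilon_2\varepsilon_3$ in both cases $j=i\pm1$. To guard against index errors, I plan to verify these by writing out the case $i=1$ explicitly and then invoking cyclic symmetry in $\mathbb{Z}_3$.
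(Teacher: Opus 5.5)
\textbf{Items (i), (ii), (v), (vi).} Your treatment of these is correct and is essentially the paper's route. The paper writes out (i) from \eqref{eq:dtproperty1} and calls the rest similar.

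\textbf{The gap in (iii).} The problem in (iii) and (iv) is one missing ingredient. Every rule in your dictionary (i), (ii), (v), (vi) requires distinct indices. The sign $\varepsilon_1\varepsilon_2\varepsilon_3$, however, sits in the diagonal relation
\[
T_k\circ S_k=\varepsilon_1\varepsilon_2\varepsilon_3\,N_k\circ T_k,\qquad\text{i.e.}\qquad S_k=\varepsilon_1\varepsilon_2\varepsilon_3\,d_k\circ N_k\circ T_k,
\]
from Eqs.~\eqref{eq:hdnproperty1} and \eqref{eq:hdnproperty2}. In (iii), rewriting $d_{i+1}T_i=\varepsilon_{i-1}S_{i-1}$ leaves you with $T_{i-1}\circ S_{i-1}$, and (i) does not apply there. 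Using the diagonal relation instead gives $N_iN_{i+1}=\varepsilon_1\varepsilon_2\varepsilon_3\,\varepsilon_{i-1}N_{i-1}=\varepsilon_i\varepsilon_{i+1}N_{i-1}$. The other order telescopes: $N_{i+1}N_i=T_id_{i-1}T_{i-1}d_{i+1}=T_id_{i+1}=\varepsilon_{i-1}N_{i-1}$. With this added, (iii) holds as stated.

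\textbf{The failure in (iv).} Here the reduction to (iii) genuinely fails. Item (ii) gives $S_j=d_kN_jT_k$ only for $j\neq k$. Each relation in (iv) involves all three indices, so whichever $k$ you fix, one factor picks up the extra $\varepsilon_1\varepsilon_2\varepsilon_3$. Conjugating (iii) therefore yields
\[
S_i\circ S_{i+1}=\varepsilon_{i-1}S_{i-1},\qquad S_i\circ S_{i-1}=\varepsilon_1\varepsilon_2\varepsilon_3\,\varepsilon_{i+1}S_{i+1}.
\]
The first identity can also be seen directly: $S_iS_{i+1}=d_{i+1}T_{i-1}d_{i-1}T_i=d_{i+1}T_i=S_{i-1}^{-1}$. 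These agree with the printed values $\varepsilon_i\varepsilon_{i+1}S_{i-1}$ and $\varepsilon_{i+1}S_{i+1}$ only when $\varepsilon_1\varepsilon_2\varepsilon_3=1$; in that case your conjugation argument works verbatim. In the (para-)hyper case the product formulas of (iv) are false, so no proof can establish them. The remaining claims of (iv), namely $S_iS_j=\varepsilon_1\varepsilon_2\varepsilon_3S_jS_i$ and $S_i^2=\varepsilon_i\Id_V$, are still true.

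\textbf{A counterexample.} Let $\g=V=\mathbb{H}$ with $\g$ abelian and $\rho=0$, so every linear map is a relative differential operator. Take $d_1x=x$, $d_2x=\mathbf{i}x$, $d_3x=\mathbf{j}x$. Then $N_1x=\mathbf{k}x$, $N_2x=\mathbf{j}x$, $N_3x=-\mathbf{i}x$, so $\varepsilon=(-1,-1,-1)$. Also $S_1u=-\mathbf{k}u$, $S_2u=\mathbf{j}u$, $S_3u=-\mathbf{i}u$. This gives $S_1S_2=-S_3$, whereas (iv) asserts $S_1S_2=S_3$.

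The paper's proof misses this because it writes out only (i). The explicit $i=1$ check you planned would have exposed it.
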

\begin{proof}
(i) For $j=i+1$, we have
$$T_i\circ S_j=T_i\circ S_{i+1}=T_i\circ d_{i-1}\circ T_i=N_{i+1}\circ T_{i}=N_j\circ T_i.$$
By Eq. \eqref{eq:dtproperty1}, we get
$$T_i\circ S_j=N_j\circ T_i=\varepsilon_{i+1}T_i\circ d_i\circ T_{i-1}=\varepsilon_{i+1}T_{i-1}.$$
For $j=i-1$, we have
$$T_i\circ S_j=T_i\circ S_{i-1}=T_i\circ d_i\circ T_{i+1}=T_{i+1}=T_{i+1}\circ d_i\circ T_i=N_{i-1}\circ T_i=N_j\circ T_i.$$

Items (ii) -- (vi) can be proved similarly.
\end{proof}

\section{Hyper relative differential operators and compatible structures on Lie algebras}\label{sec:compatible}
In this section, we first introduce DN-structures and KD-structures on Lie algebras, and show that hyper relative differential operators naturally induce such structures. We then recall the notion of KN-structures on Lie algebras and study the relationships among DN-structures, KD-structures, and KN-structures. In particular, we prove that hyper relative differential operators also give rise to KN-structures.

\subsection{Hyper relative differential operators, {\rm DN}-structures and {\rm KD}-structures on Lie algebras}

In a KD-structure, K refers to an $\huaO$-operator introduced by Kupersdmit, while D stands for a relative differential operator.  Similarly, in a DN-structure, D refers to a relative differential operator, while N refers to a Nijenhuis operator. 
\begin{defi}
Let $d$ be a relative differential operator and $T$ an $\huaO$-operator on a Lie algebra $\g$ with respect to a representation $(V;\rho)$. A pair $(T,d)$ is called a \emph{KD-structure} on  $\g$ with respect to the representation $(V;\rho)$ if $d_N=d\circ N$ is also a relative differential operator on $\g$ with respect to the representation $(V;\rho)$, where $N:=T\circ d$.
\end{defi}

\begin{pro}\label{pro:hyper-KD}
Let $(d_1,d_2,d_3)$ be an $\varepsilon$-hyper relative differential operator on a Lie algebra $\g$. Then $(T_{i},d_k)$  are {\rm KD}-structures for all $i\neq k\in\mathbb{Z}_{3}$.
\end{pro}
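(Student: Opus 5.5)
We need: for $i\neq k$, with $N=T_i\circ d_k$, the composite $d_k\circ N = d_k\circ T_i\circ d_k$ is a relative differential operator. Each $T_i$ is an $\huaO$-operator (Proposition 2.1) and each $d_k$ is a relative differential operator, so the only thing to check is the condition on $d_k\circ N$.

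The plan is to avoid any direct computation with the cocycle identity and instead use the algebraic identities of Proposition \ref{pro:pro2.8}. These should show that $d_k\circ T_i\circ d_k$ is a scalar multiple $\pm d_j$, where $j$ is the third index. Since relative differential operators form a vector space (Eq. \eqref{eq:RDO} is linear in $d$), the claim then follows at once. I will split into the two cases $i=k+1$ and $i=k-1$.

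\emph{Case $i=k+1$.} By Proposition \ref{pro:pro2.8}(v), $T_{k+1}\circ d_k=N_{k-1}$. By Proposition \ref{pro:pro2.8}(ii), applied with the pair $(k,\,k-1)$, $d_k\circ N_{k-1}=\varepsilon_{k-1}d_{k+1}$. Hence $d_k\circ N=\varepsilon_{k-1}d_{k+1}$.

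\emph{Case $i=k-1$.} By Proposition \ref{pro:pro2.8}(v), $T_{k-1}\circ d_k=\varepsilon_{k+1}N_{k+1}$. By Proposition \ref{pro:pro2.8}(ii), $d_k\circ N_{k+1}=d_{k-1}$. Hence $d_k\circ N=\varepsilon_{k+1}d_{k-1}$.

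Alternatively, both cases can be read off from Eq. \eqref{eq:hdnproperty2} together with Eq. \eqref{eq:dtproperty1}. For instance, $d_k\circ T_{k-1}\circ d_{k+1}$ is a multiple of $h^\flat$, and one can rewrite the composite using $d_{k+1}\circ T_{k-1}=\varepsilon_k d_{k-1}\circ T_{k+1}$.

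In either case $d_k\circ N$ is $\pm$ one of $d_1,d_2,d_3$, hence a relative differential operator, and $(T_i,d_k)$ is a KD-structure. The main obstacle is only bookkeeping: the indices mod 3 and the signs in (ii) and (v) of Proposition \ref{pro:pro2.8}. If those items are in doubt, I would rederive the needed identity directly from Eq. \eqref{eq:dtproperty1}. For example, in the case $i=k+1$, write $d_k\circ T_{k+1}\circ d_k$ and use $N_{k-1}^2=\varepsilon_{k-1}\Id_\g$, i.e. $T_{k+1}\circ d_k\circ T_{k+1}\circ d_k=\varepsilon_{k-1}\Id_\g$. This gives $d_k\circ T_{k+1}\circ d_k=\varepsilon_{k-1}d_{k+1}$ directly. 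This direct route is sign-safe and is the one I would actually write down.
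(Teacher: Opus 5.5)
Your proposal is correct and takes essentially the same route as the paper. The paper likewise uses Proposition~\ref{pro:pro2.8}(ii) to get $d_i\circ T_{i+1}\circ d_i=\varepsilon_{i-1}d_{i+1}$ (and argues analogously for $T_{i-1}$), then concludes that $d_N$ satisfies Eq.~\eqref{eq:RDO} because $d_{i+1}$ does. Your indices and signs in both cases are right, and your direct derivation from $N_{k-1}^2=\varepsilon_{k-1}\Id_\g$ is a clean, self-contained version of that same step.
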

\begin{proof}
By \rm(ii) in Prop. \ref{pro:pro2.8} and the fact that $d_{i+1}$ is a relative differential operator, letting $d=d_i$ and $N=T_{i+1}\circ d_i$,  we have
\begin{align*}
\rho(x)d_{N}(y)-\rho(y)d_{N}(x)&=\rho(x)(d_i T_{i+1}d_i)(y)-\rho(y)(d_i T_{i+1} d_i)(x)\\
&=\rho(x)(\varepsilon_{i-1}d_{i+1})(y)-\rho(y)(\varepsilon_{i-1}d_{i+1})(x)\\
&=\varepsilon_{i-1}d_{i+1}[x,y]=d_i T_{i+1} d_i[x,y]\\
&=d_{N}[x,y],
\end{align*}
which implies that $d_N=d\circ N$ is also a relative differential operator and thus $(T_{i+1},d_i)$ is a {\rm KD}-structure. Similarly,  we can prove that $(T_{i-1},d_i)$ is also a {\rm KD}-structure. These facts imply that $(T_{i},d_k)$  are {\rm KD}-structures for all $i\neq k\in\mathbb{Z}_{3}$.
\end{proof}

\begin{defi}
Let $d$ be a relative differential operator on a Lie algebra $\g$ with respect to a  representation $(V;\rho)$ and $N:\g\rightarrow \g$ be a Nijenhuis operator. A pair $(d,N)$ is called a  \emph{DN-structure} on $\g$ with respect to  $(V;\rho)$ if $d_N=d\circ N$ is  a relative differential operator on $\g$ with respect to  $(V;\rho)$.
\end{defi}

\begin{pro}
Let $(d,N)$ be a {\rm DN}-structure on a Lie algebra $\g$ with respect to a representation $(V;\rho)$. Then $d_{N^k}=d\circ N^k$ for all $k\in \mathbb{Z}_+$ are relative differential operators on  $\g$ with respect to  $(V;\rho)$. Furthermore, $(d_{N^k},N^k)$ are {\rm DN}-structures for all $k\in \mathbb{Z}_+$.
\end{pro}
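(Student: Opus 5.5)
The plan is to prove a single ``shift'' step: if $(d,N)$ is a DN-structure, then so is $(d_N,N)$, i.e.\ $d\circ N^2$ is again a relative differential operator. Once this holds, induction on $k$ gives everything. Indeed, $(d_{N^{k}},N)$ being a DN-structure yields that $(d_{N^k})_N=d_{N^{k+1}}$ is a relative differential operator, hence $(d_{N^{k+1}},N)$ is a DN-structure. So every $d_{N^k}$ is a relative differential operator with respect to $(V;\rho)$.

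For the shift step I would compute $d N^2[x,y]=(dN)(N[x,y])$. By the definition \eqref{eq:deformbracket} of the deformed bracket,
$$N[x,y]=[Nx,y]+[x,Ny]-[x,y]_N .$$
I would apply $d_N=d\circ N$ to each of the three terms. Since $d_N$ is a relative differential operator, the first two terms give
$$\rho(Nx)\,dNy-\rho(y)\,dN^2x+\rho(x)\,dN^2y-\rho(Ny)\,dNx .$$
For the third term I would use the Nijenhuis identity \eqref{eq:Nij}, in the form $N[x,y]_N=[Nx,Ny]$, together with the fact that $d$ itself is a relative differential operator:
$$dN[x,y]_N=d[Nx,Ny]=\rho(Nx)\,dNy-\rho(Ny)\,dNx .$$
Subtracting, the mixed terms $\rho(Nx)dNy$ and $\rho(Ny)dNx$ cancel. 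What remains is
$$dN^2[x,y]=\rho(x)\,dN^2y-\rho(y)\,dN^2x ,$$
which is exactly the required identity. This cancellation is the only real point of the argument: one has to handle $d_N$ on $[x,y]_N$ by passing through $N[x,y]_N=[Nx,Ny]$ and using the original operator $d$ rather than $d_N$.

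For the second assertion, I would use the standard fact from \cite{Dorfman} that every power $N^k$ of a Nijenhuis operator is again a Nijenhuis operator. It then remains to check that $d_{N^k}\circ N^k=d\circ N^{2k}$ is a relative differential operator, and this is covered by the first part applied with exponent $2k$. Hence $(d_{N^k},N^k)$ is a DN-structure for all $k\in\mathbb{Z}_+$.
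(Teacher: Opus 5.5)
Your proof is correct and is essentially the paper's argument: the paper runs an induction whose step writes $d N^{m+1}[x,y]=(d\circ N^{m-1})(N^2[x,y])$, expands via the Nijenhuis identity, and uses only that $d\circ N^{m-1}$ and $d\circ N^{m}$ are relative differential operators, which is exactly your shift step applied to the DN-structure $(d\circ N^{m-1},N)$. Your explicit remark that $d_{N^k}\circ N^k=d\circ N^{2k}$ is covered by the first part fills in a step that the paper leaves implicit in the second assertion.
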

\begin{proof}
For $k=1$, we have $d_{N}=d\circ N$, which is a relative differential operator by definition of a {\rm DN}-structure.

Now we assume that $d_{N^k}=d\circ N^k$ are relative differential operators for all positive integers $1<k\leq m$. We will prove that  $d_{N^{m+1}}=d\circ N^{m+1}$ is a relative differential operator. Since $N$ is a Nijenhuis operator, we have
\begin{align*}
d_{N^{m+1}}[x,y]&=(d\circ N^{m+1})[x,y]=(d\circ N^{m-1})(N^2[x,y])\\
&=(d\circ N^{m-1})\big(N[Nx,y]+N[x,Ny]-[N x,Ny]\big)\\
&=(d\circ N^m)[Nx,y]+(d\circ N^m)[x,Ny]-(d\circ N^{m-1})([Nx,Ny]).
\end{align*}
Since $d\circ N^m$ and $d\circ N^{m-1}$ are relative differential operators by inductive hypothesis, we get
\begin{align*}
&(d\circ N^m)[Nx,y]=\rho(Nx)(d\circ N^m)(y)-\rho(y)(d\circ N^m)(Nx),\\
&(d\circ N^m)[x,Ny]=\rho(x)(d\circ N^m)(Ny)-\rho(Ny)(d\circ N^m)(x),\\
&(d\circ N^{m-1})[Nx,Ny]=\rho(Nx)(d\circ N^{m})(y)-\rho(Ny)(d\circ N^{m})(x).
\end{align*}
Finally, we obtain
\begin{align*}
d_{N^{m+1}}[x,y]&=\rho(Nx)(d\circ N^m)(y)-\rho(y)(d\circ N^m)(Nx)+\rho(x)(d\circ N^m)(Ny)-\rho(Ny)(d\circ N^m)(x)\\
&\quad-\big(\rho(Nx)(d\circ N^{m})(y)-\rho(Ny)(d\circ N^{m})(x)\big)\\
&=\rho(x)(d\circ N^m)(Ny)-\rho(y)(d\circ N^m)(Nx)\\
&=\rho(x)(d\circ N^{m+1})(y)-\rho(y)(d\circ N^{m+1})(x)\\
&=\rho(x)(d_{N^{m+1}})(y)-\rho(y)(d_{N^{m+1}})(x),
\end{align*}
which implies that $d_{N^k}=d\circ N^k$ for all $k\in \mathbb{Z}_+$ are relative differential operators on $\g$ with respect to $(V;\rho)$. Since $N^k$ for all $k\in \mathbb{Z}_+$ are also Nijenhuis operators on  $\g$, it follows that $(d_{N^k},N^k)$ are {\rm DN}-structures on $\g$ with respect to $(V;\rho)$  for all $k\in \mathbb{Z}_+$.
\end{proof}
The following proposition demonstrates the relationships between DN-structres and KD-structures.
\begin{pro}\label{pro:hyper-KD-DN}
If $(d,N)$ is a {\rm DN}-structure on a Lie algebra $\g$ with respect to a representation $(V;\rho)$ and $d$ is invertible, then $(T=N\circ d^{-1},d)$ is a {\rm KD}-structure on $\g$ with respect to  $(V;\rho)$. Conversely, if $(T,d)$ is a {\rm KD}-structure, then $(d,N=T\circ d)$ is a {\rm DN}-structure.
\end{pro}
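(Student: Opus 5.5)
Both directions come down to checking the definitions, plus one computation: a relative differential operator composed with a Nijenhuis operator, with the result again a relative differential operator, yields an $\huaO$-operator.

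\emph{Converse direction.} Let $(T,d)$ be a KD-structure and set $N=T\circ d$. By definition $d_N=d\circ N$ is a relative differential operator, so it only remains to show that $N$ is a Nijenhuis operator. Using Eq. \eqref{eq:RRB} for $T$ with $u=dx$, $v=dy$, we get
\[
[Nx,Ny]=T\big(\rho(Nx)dy-\rho(Ny)dx\big).
\]
We then expand $N\big([Nx,y]+[x,Ny]-N[x,y]\big)=T\big(d[Nx,y]+d[x,Ny]-d_N[x,y]\big)$. The relative differential operator property of $d$ gives
\[
d[Nx,y]=\rho(Nx)dy-\rho(y)d_Nx,\qquad d[x,Ny]=\rho(x)d_Ny-\rho(Ny)dx,
\]
and the same property of $d_N$ gives $d_N[x,y]=\rho(x)d_Ny-\rho(y)d_Nx$. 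The $\rho(y)d_Nx$ and $\rho(x)d_Ny$ terms cancel, which leaves $T\big(\rho(Nx)dy-\rho(Ny)dx\big)=[Nx,Ny]$. Hence $N$ is Nijenhuis and $(d,N)$ is a DN-structure.

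\emph{Forward direction.} Let $(d,N)$ be a DN-structure with $d$ invertible, and put $T=N\circ d^{-1}$. Then $T\circ d=N$, so $d\circ(T\circ d)=d_N$ is a relative differential operator by hypothesis. The only remaining point is that $T$ is an $\huaO$-operator. Since $d$ is bijective, it suffices to check Eq. \eqref{eq:RRB} on $u=dx$, $v=dy$, where $Tu=Nx$ and $Tv=Ny$. The required identity is
\[
[Nx,Ny]=N d^{-1}\big(\rho(Nx)dy-\rho(Ny)dx\big).
\]
By the relative differential operator properties of $d$ and $d_N$ (as above),
\[
\rho(Nx)dy-\rho(Ny)dx=d[Nx,y]+d[x,Ny]-d_N[x,y].
\]
Applying $d^{-1}$ gives $[Nx,y]+[x,Ny]-N[x,y]$, and applying $N$ then gives $[Nx,Ny]$ by the Nijenhuis identity \eqref{eq:Nij}.

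There is no real obstacle here. The one point needing care is the regrouping of the four $\rho$-terms so that the identity $\rho(x)d_Ny-\rho(y)d_Nx=d_N[x,y]$ can be substituted. Both directions rest on this same identity, read in opposite directions.
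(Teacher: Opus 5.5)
Your proposal is correct and follows essentially the same route as the paper. The converse direction is the paper's computation verbatim. In the forward direction, the paper works with $d^{-1}u,d^{-1}v$: it first expands $[Tu,Tv]$ via the Nijenhuis identity and then uses that $d$ and $d\circ N$ are relative differential operators. This is your substitution $u=dx$, $v=dy$ with the two steps taken in the opposite order.
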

\begin{proof}
Let $(d,N)$ be a {\rm DN}-structure. Since $N$ is a Nijenhuis operator, we have
\begin{align*}
[Tu,Tv]&=[N\circ d^{-1}u,N\circ d^{-1}v]\\
&=N\big([N\circ d^{-1}u,d^{-1}v]+[d^{-1}u,N\circ d^{-1}v]-N[d^{-1}u,d^{-1}v]\big)\\
&=(T\circ d)\big([Tu,d^{-1}v]+[d^{-1}u,Tv]-N[d^{-1}u,d^{-1}v]\big).
\end{align*}
On the other hand, since $d$ and $d\circ N$ are relative differential operators, we have
\begin{align*}
(T\circ d)[Tu,d^{-1}v]&=T\big(\rho(Tu)v-\rho(d^{-1}v)d(Tu)\big),\\
(T\circ d)[d^{-1}u,Tv]&=T\big(\rho(d^{-1}u)d(Tv)-\rho(Tv)u\big),\\
(T\circ d\circ N)[d^{-1}u,d^{-1}v]&=T\big(\rho(d^{-1}u)d(Tv)-\rho(d^{-1}v)d(Tu)\big).
\end{align*}
Thus, we have
\begin{align*}
[Tu,Tv]&=T\big(\rho(Tu)v-\rho(d^{-1}v)d(Tu)\big)+T\big(\rho(d^{-1}u)d(Tv)-\rho(Tv)u\big)\\
&\quad-\Big(T\big(\rho(d^{-1}u)d(Tv)-\rho(d^{-1}v)d(Tu)\big)\Big)\\
&=T(\rho(Tu)v-\rho(Tv)u),
\end{align*}
which implies that $T$ is an $\huaO$-operator and thus $(T=N\circ d^{-1},d)$ is a {\rm KD}-structure.

Conversely, since $T$ is an $\huaO$-operator, we have
\begin{align*}
[Nx,Ny]=[T\circ dx,T\circ dy]=T\big(\rho(Nx)d(y)-\rho(Ny)d(x)\big).
\end{align*}
Since $d$ and $d\circ N$ are relative differential operators, we have
\begin{align*}
N[Nx,y]&=T\big(\rho(Nx)d(y)-\rho(y)d(Nx)\big),\\
N[x,Ny]&=T\big(\rho(x)d(Ny)-\rho(Ny)d(x)\big),\\
N^2[x,y]&=T\big(\rho(x)d(Ny)-\rho(y)d(Nx)\big).
\end{align*}
Thus, we have
\begin{align*}
&N\big([Nx,y]+[x,Ny]-N[x,y]\big)\\
&=T\big(\rho(Nx)d(y)-\rho(y)d(Nx)\big)+T\big(\rho(x)d(Ny)-\rho(Ny)d(x)\big)-\Big(T\big(\rho(x)d(Ny)-\rho(y)d(Nx)\big)\Big)\\
&=T\big(\rho(Nx)d(y)-\rho(Ny)d(x)\big)=[Nx,Ny],
\end{align*}
which implies that $N$ is a Nijenhuis operator and thus $(d,N=T\circ d)$ is a {\rm DN}-structure.
\end{proof}
\begin{thm}\label{col:hyper-DN}
Let $(d_1,d_2,d_3)$ be an $\varepsilon$-hyper relative differential operator on a Lie algebra $\g$. Then  $(d_i,N_k)$ are {\rm DN}-structures for all $i\neq k\in\mathbb{Z}_{3}$.
\end{thm}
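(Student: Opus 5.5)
The plan is to reduce the statement to two earlier results. Proposition \ref{pro:hyper-KD} says $(T_j,d_i)$ is a KD-structure for all $j\neq i$. Proposition \ref{pro:hyper-KD-DN} says that if $(T,d)$ is a KD-structure, then $(d,T\circ d)$ is a DN-structure. Combining them, $(d_i,T_j\circ d_i)$ is a DN-structure for every $j\neq i$. It remains to identify $T_j\circ d_i$, for $j\neq i$, with the operators $N_k$, $k\neq i$.

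For this I use item (v) of Proposition \ref{pro:pro2.8}. It gives
\[
T_{i+1}\circ d_i=N_{i-1},\qquad T_{i-1}\circ d_i=\varepsilon_{i+1}N_{i+1}.
\]
Taking $j=i+1$ yields that $(d_i,N_{i-1})$ is a DN-structure. Taking $j=i-1$ yields that $(d_i,\varepsilon_{i+1}N_{i+1})$ is a DN-structure.

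It remains to remove the sign $\varepsilon_{i+1}=\pm1$. Set $N'=\varepsilon_{i+1}N_{i+1}$. Two facts suffice:
\begin{itemize}
\item[(a)] $N_{i+1}$ is Nijenhuis. This holds by Proposition \ref{pro:Nij}, or equivalently because the Nijenhuis identity \eqref{eq:Nij} is homogeneous of degree $2$ in $N$, so it holds for $N_{i+1}$ iff it holds for $N'$.
\item[(b)] $d_i\circ N_{i+1}=\varepsilon_{i+1}\, d_i\circ N'$ is a relative differential operator, because relative differential operators form a linear subspace of $\Hom(\g,V)$.
\end{itemize}
Hence $(d_i,N_{i+1})$ is also a DN-structure. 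Since $\{i-1,i+1\}$ is exactly the set of $k\neq i$ in $\mathbb{Z}_3$, this covers all required pairs.

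As a cross-check, one can verify directly that $d_i\circ N_k$ is a relative differential operator. Item (ii) of Proposition \ref{pro:pro2.8} gives $d_i\circ N_{i+1}=d_{i-1}$ and $d_i\circ N_{i-1}=\varepsilon_{i-1}d_{i+1}$. Each is a scalar multiple of one of the given relative differential operators, so the DN condition is immediate; together with Proposition \ref{pro:Nij} this gives a second short proof.

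The only real obstacle is index bookkeeping: matching $T_j\circ d_i$ to the correct $N_k$ with the correct sign from items (ii) and (v) of Proposition \ref{pro:pro2.8}. I would double-check these against the defining relations \eqref{eq:dtproperty1} and \eqref{eq:dtproperty2} before writing up.
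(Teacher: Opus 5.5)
Your proof is correct and follows the paper's own route: combine Proposition \ref{pro:hyper-KD} with the converse half of Proposition \ref{pro:hyper-KD-DN}. You also make explicit two points the paper leaves implicit, namely the identifications $T_{i+1}\circ d_i=N_{i-1}$ and $T_{i-1}\circ d_i=\varepsilon_{i+1}N_{i+1}$ and the removal of the sign $\varepsilon_{i+1}$; your alternative check via item (ii) of Proposition \ref{pro:pro2.8} together with Proposition \ref{pro:Nij} is also valid.
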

\begin{proof}
By Prop. \ref{pro:hyper-KD}, $(T_i,d_k)$ are {\rm KD}-structures for all $i\neq k\in\mathbb{Z}_{3}$. By Prop. \ref{pro:hyper-KD-DN}, $(d_i,N_k)$ are {\rm DN}-structures for all $i\neq k\in\mathbb{Z}_{3}$.
\end{proof}

\subsection{Hyper relative differential operators, dual-Nijenhuis pairs and {\rm KN}-structures} 
Following \cite{HLS}), a pair $(N,S)$, where $N\in\gl(\g)$ and $S\in\gl(V)$, is called a \emph{dual-Nijenhuis pair} on a Lie algebra  $\g$ with a representation $(V;\rho)$ if $N$ is a Nijenhuis operator on $\g$ and $S$ satisfies the following condition:
\begin{equation}\label{eq:coNijpair}
\rho({Nx})(Sv)=S(\rho({Nx})v)+\rho({x})(S^2(v))-S(\rho({x})(Sv)).
\end{equation}
Let $(N,S)$ be a dual-Nijenhuis pair on a Lie algebra  $(\g,[\cdot,\cdot])$ with a representation $\rho$. Define ${\varrho}:\g\longrightarrow\gl(V)$ by
\begin{equation}
{\varrho}(x):=\rho(Nx)-[\rho(x),S],\quad x\in\g.
\end{equation}
It was shown in \cite{HLS} that  ${\varrho}$ is a representation of the Lie algebra $(\g,[\cdot,\cdot]_N)$ on $V$, where the Lie bracket $[\cdot,\cdot]_N$ is given by Eq. \eqref{eq:deformbracket}.

Let $T:V\to \g$ be an $\huaO$-operator on a Lie algebra $\g$ with respect to a representation $(V;\rho)$. Define a multiplication $\star^T$ on $V$ by
\begin{equation}
  u\star^T v:=\rho(Tu)(v),\quad \text{ for all }\, u,v\in V.
\end{equation}
Then, $(V,\star^T)$ is a pre-Lie algebra. We denote by $(V,[\cdot,\cdot]^T)$ the sub-adjacent Lie algebra of the pre-Lie algebra $(V,\star^T)$. More precisely,
\begin{equation}\label{eq:bracketT}
 ~[u,v]^T:=\rho(Tu)(v)-\rho(Tv)(u),\quad \text{ for all }\, u,v\in V.
\end{equation}
Moreover, $T$ is a Lie algebra homomorphism from $(V,[\cdot,\cdot]^T)$ to $(\g,[\cdot,\cdot])$, see \cite{Bai1} for more details.

Let $T:V\rightarrow\g$ be an $\huaO$-operator and $(N,S)$ be a dual-Nijenhuis pair on a Lie algebra  $\g$ with a representation $(V;\rho)$.  We define the bracket $[\cdot,\cdot]^T_S:\wedge^2V\longrightarrow V$ to be the deformed bracket of $[\cdot,\cdot]^T$ by $S$, i.e.
\begin{eqnarray*}
 {[u,v]}^T_S&:=&[S(u),v]^T+[u,S(v)]^T-S[u,v]^T,\quad \text{ for all } \, u,v\in V.\label{eq:defieq33}
\end{eqnarray*}
Define the bracket $\{\cdot,\cdot\}_{{\varrho}}^T:\wedge^2V\longrightarrow V$ by
\begin{eqnarray}
\label{eq:defieq23}\{u,v\}_{{\varrho}}^T&:=&{\varrho}(Tu)(v)-{\varrho}(Tv)(u),\quad \text{ for all }\, u,v\in V.
\end{eqnarray}

\begin{defi}{\rm (\cite{HLS})}
Let $T:V\rightarrow\g$ be an $\huaO$-operator and $(N,S)$ a dual-Nijenhuis pair on a Lie algebra  $\g$ with a representation $(V;\rho)$. The triple $(T,S,N)$ is called a \emph{Kupershmidt-dual-Nijenhuis structure ({\rm KN}-structure)} on $\g$ with respect to $(V;\rho)$ if $T$ and $(N,S)$ satisfy the following conditions
\begin{eqnarray}
\label{eq:TN1}N\circ T&=&T\circ S,\\
\label{eq:TN2} {[u,v]}^{N\circ T}&=&{[u,v]}^{T}_{S}.
\end{eqnarray}
\end{defi}
Note that if $(T,S,N)$ is a KN-structure, then $S$ is a Nijenhuis operator on the sub-adjacent Lie algebra $(V,[\cdot,\cdot]^T)$, and furthermore, the three brackets  ${[\cdot,\cdot]}^T_S,$ $\{\cdot,\cdot\}_{{\varrho}}^T$ and $[\cdot,\cdot]^{N\circ T }$ coincide and are all Lie brackets.

\begin{pro}{\rm (\cite{HLS})}\label{pro:TS1}
 Let $(T,S,N)$ be a {\rm KN}-structure with respect to the representation $(V;\rho)$. Then, we have
\begin{itemize}
\item[$\rm(i)$] $T$ is an $\huaO$-operator on the deformed Lie algebra $(\g,[\cdot,\cdot]_N)$ with respect to the representation $(V; {\varrho})$;
\item[$\rm(ii)$] $N\circ T$ is an $\huaO$-operator on the Lie algebra $\g$ with respect to the representation  $(V;\rho)$.
\end{itemize}
\end{pro}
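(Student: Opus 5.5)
Both items come from comparing the bracket $\{\cdot,\cdot\}^T_{\varrho}$ with $[\cdot,\cdot]^{N\circ T}$, using the coincidence of brackets noted after the definition of a KN-structure. We use the two defining conditions \eqref{eq:TN1} and \eqref{eq:TN2} and the Nijenhuis identity for $N$. Item (ii) will be proved first, since (i) is derived from it.

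For (ii), we expand $[NTu,NTv]$ using that $N$ is Nijenhuis:
\[
[NTu,NTv]=N\big([NTu,Tv]+[Tu,NTv]-N[Tu,Tv]\big).
\]
Next, \eqref{eq:TN1} gives $NTu=TSu$. Applying the $\huaO$-operator identity for $T$ to each bracket then gives
\[
[TSu,Tv]=T[Su,v]^T,\qquad [Tu,TSv]=T[u,Sv]^T,\qquad N[Tu,Tv]=NT[u,v]^T=TS[u,v]^T.
\]
Combining these with the outer $N$ in front and using $NT=TS$ once more, we get
\[
[NTu,NTv]=NT\big([Su,v]^T+[u,Sv]^T-S[u,v]^T\big)=NT[u,v]^T_S.
\]
By \eqref{eq:TN2} the right-hand side equals $NT[u,v]^{N\circ T}=NT\big(\rho(NTu)v-\rho(NTv)u\big)$. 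This is exactly the $\huaO$-operator condition for $N\circ T$.

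For (i), we need $[Tu,Tv]_N=T\{u,v\}^T_{\varrho}$. The left side is
\[
[Tu,Tv]_N=[NTu,Tv]+[Tu,NTv]-N[Tu,Tv].
\]
By the computation above it equals $T[Su,v]^T+T[u,Sv]^T-TS[u,v]^T=T[u,v]^T_S$. On the other side we expand directly with $\varrho(x)=\rho(Nx)-[\rho(x),S]$:
\[
\{u,v\}^T_{\varrho}=\rho(NTu)v-\rho(Tu)Sv+S\rho(Tu)v-\rho(NTv)u+\rho(Tv)Su-S\rho(Tv)u.
\]
Regrouping, this is $[u,v]^{N\circ T}-[u,Sv]^T-[Su,v]^T+S[u,v]^T$. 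The middle terms recombine because $\rho(TSv)u=\rho(NTv)u$, by \eqref{eq:TN1}. Hence
\[
\{u,v\}^T_{\varrho}=[u,v]^{N\circ T}-[u,v]^T_S+S[u,v]^T.
\]
We have not verified this regrouping precisely, so the bookkeeping must be redone term by term. If it is off, we would instead directly show $\{u,v\}^T_\varrho=[u,v]^{N\circ T}$ by applying \eqref{eq:TN2} to the bracket $\{\cdot,\cdot\}^T_\varrho$.

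The main obstacle is exactly this identification $\{u,v\}^T_\varrho=[u,v]^T_S$, i.e.\ proving (rather than quoting) the coincidence of the three brackets. Once that holds, (i) follows from $[Tu,Tv]_N=T[u,v]^T_S=T\{u,v\}^T_\varrho$. We use the fact, recalled from \cite{HLS}, that $\varrho$ is a representation of $(\g,[\cdot,\cdot]_N)$, so the statement makes sense.
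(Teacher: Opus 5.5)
Your proof of (ii) is correct and complete. The Nijenhuis identity, \eqref{eq:TN1} and the $\huaO$-operator identity for $T$ give $[NTu,NTv]=NT[u,v]^T_S$, and \eqref{eq:TN2} turns this into the $\huaO$-operator condition for $N\circ T$. The same computation also correctly gives $[Tu,Tv]_N=T[u,v]^T_S$ in (i). The paper gives no proof of this statement: it quotes \cite{HLS} together with the remark that the three brackets coincide. So proving that coincidence, as you set out to do, is exactly what is needed.

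The gap is the remaining identity $\{u,v\}^T_\varrho=[u,v]^T_S$ in (i). Your displayed regrouping is wrong, and your two versions of it do not agree with each other. There are two separate errors:
\begin{itemize}
\item Rewriting $-\rho(Tu)Sv+\rho(Tv)Su$ as $-[u,Sv]^T-[Su,v]^T$ also brings in $\rho(TSv)u-\rho(TSu)v=-[u,v]^{N\circ T}$, which cancels your leading term.
\item $-[u,Sv]^T-[Su,v]^T+S[u,v]^T$ equals $-[u,v]^T_S$, with no extra $S[u,v]^T$.
\end{itemize}
Under \eqref{eq:TN2} your two formulas would give $\{u,v\}^T_\varrho=0$ and $\{u,v\}^T_\varrho=S[u,v]^T$ respectively, and neither is what you need.

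The correct bookkeeping uses \eqref{eq:TN1} to replace $\rho(TSu)$ by $\rho(NTu)$, which gives
\[
[u,v]^T_S=[u,v]^{N\circ T}+[\rho(Tu),S]v-[\rho(Tv),S]u,\qquad
\{u,v\}^T_\varrho=[u,v]^{N\circ T}-[\rho(Tu),S]v+[\rho(Tv),S]u.
\]
Hence $\{u,v\}^T_\varrho=2[u,v]^{N\circ T}-[u,v]^T_S$. In particular \eqref{eq:TN2} is equivalent to $[\rho(Tu),S]v=[\rho(Tv),S]u$, and it yields $\{u,v\}^T_\varrho=[u,v]^{N\circ T}=[u,v]^T_S$. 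Combined with $[Tu,Tv]_N=T[u,v]^T_S$, this proves (i).

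Your fallback sentence points toward this but does not carry it out, so as written (i) is not established. With these two lines inserted, the argument is complete. Note that the dual-Nijenhuis condition \eqref{eq:coNijpair} enters none of these identities. It is needed only, through \cite{HLS}, to know that $\varrho$ is a representation of $(\g,[\cdot,\cdot]_N)$, so that statement (i) makes sense.
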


\begin{defi}
Let $T_1,T_2: V\rightarrow \g$ be two $\huaO$-operators on a Lie algebra $\g$ with respect to a representation $(V;\rho)$. If for any $k_1,k_2\in \mathbb{Z}_+$, the operator $k_1T_1+k_2T_2$ is still an $\huaO$-operator, then $T_1$ and $T_2$ are called \emph{compatible}.
\end{defi}

The presence of a KN-structure gives rise to a hierarchy of compatible 
$\huaO$-operators.
\begin{pro}{\rm (\cite{HLS})}\label{pro:hierarchy}
Let $(T,S,N)$ be a {\rm KN}-structure with respect to the representation $(V;\rho)$. Then all $N^k\circ T$ are $\huaO$-operators with respect to the representation $(V;\rho)$, and for all $k,l\in\Nat$, $N^k\circ T$ and $N^l\circ T$ are compatible.
\end{pro}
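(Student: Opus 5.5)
The plan is to reduce everything to the classical theory of Nijenhuis hierarchies applied to $S$ on the Lie algebra $(V,[\cdot,\cdot]^T)$, and then push the resulting identities to $\g$ through the homomorphism $T$.

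First, by induction on $k$ using \eqref{eq:TN1}, I will show that $N^k\circ T=T\circ S^k$ for all $k\in\Nat$. As noted after the definition of a KN-structure, $S$ is a Nijenhuis operator on the sub-adjacent Lie algebra $(V,[\cdot,\cdot]^T)$, and $T:(V,[\cdot,\cdot]^T)\to(\g,[\cdot,\cdot])$ is a Lie algebra homomorphism. I will take from the standard theory of Nijenhuis operators (Kosmann-Schwarzbach--Magri) the following facts:
\begin{itemize}
\item[(a)] every power $S^k$ is again Nijenhuis on $(V,[\cdot,\cdot]^T)$, so $S^k[u,v]^T_{S^k}=[S^ku,S^kv]^T$;
\item[(b)] for all $k,l$ one has the mixed identity
\[
[S^ku,S^lv]^T+[S^lu,S^kv]^T=S^k[u,v]^T_{S^l}+S^l[u,v]^T_{S^k}.
\]
\end{itemize}

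The core step, and the main obstacle, is the bracket identity
\[
[u,v]^{N^k\circ T}=[u,v]^T_{S^k}\quad\text{for all } k\in\Nat .
\]
For $k=1$ this is exactly \eqref{eq:TN2}. For the inductive step I would expand $[u,v]^T_{S^{k+1}}$ using the standard recursion for deformed brackets of powers of a Nijenhuis operator, namely $[\cdot,\cdot]^T_{S^{k+1}}=([\cdot,\cdot]^T_{S^k})_S$, which again follows from the Nijenhuis property. I then substitute the inductive hypothesis. This reduces the claim to checking
\[
\rho(N^{k+1}Tu)v-\rho(N^{k+1}Tv)u=\rho(N^kTSu)v+\rho(N^kTu)Sv-S\rho(N^kTu)v-(u\leftrightarrow v).
\]
The terms $\rho(N^kTSu)v=\rho(N^{k+1}Tu)v$ cancel directly because $N^kT=TS^k$ and $NT=TS$. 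What remains is the symmetrized identity $\rho(N^kTu)Sv-S\rho(N^kTu)v=(u\leftrightarrow v)$. This is where the dual-Nijenhuis condition \eqref{eq:coNijpair} must be used, with $x=N^{k-1}Tu$, together with the inductive hypothesis at levels $k-1$ and $k$. If the direct route is messy, I would instead proceed as follows: show that $(N\circ T,S,N)$ is itself a KN-structure, so that \eqref{eq:TN2} for $N\circ T$ is obtained from the level-one case and Proposition \ref{pro:TS1}; then iterate.

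Once the bracket identity is in hand, the $\huaO$-operator property is immediate. Applying $T$ to (a) gives
\[
N^kT[u,v]^{N^kT}=TS^k[u,v]^T_{S^k}=T[S^ku,S^kv]^T=[N^kTu,N^kTv],
\]
using $N^kT=TS^k$ and that $T$ is a homomorphism; this says exactly that $N^k\circ T$ is an $\huaO$-operator.

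For compatibility, expand the $\huaO$-operator condition for $k_1N^kT+k_2N^lT$. Since each summand is already an $\huaO$-operator, only the cross term must be checked:
\[
[N^kTu,N^lTv]+[N^lTu,N^kTv]=N^kT[u,v]^{N^lT}+N^lT[u,v]^{N^kT}.
\]
Rewrite the left side as $T\big([S^ku,S^lv]^T+[S^lu,S^kv]^T\big)$ and the right side as $T\big(S^k[u,v]^T_{S^l}+S^l[u,v]^T_{S^k}\big)$, using the bracket identity and $N^jT=TS^j$. These two expressions agree by applying $T$ to the mixed identity (b), which completes the proof.
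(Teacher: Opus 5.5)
The paper gives no proof of this proposition; it only quotes it from \cite{HLS}. So there is no internal argument to compare with. Your proof is correct and is the standard Poisson--Nijenhuis-type argument: transfer to $(V,[\cdot,\cdot]^T)$, where $S$ is Nijenhuis; use the algebraic facts (a) and (b), where (b) is the vanishing of the Fr\"olicher--Nijenhuis bracket of $S^k$ and $S^l$; then push forward through the homomorphism $T$.

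The step you flagged as the main obstacle closes in one line, so the fallback is unnecessary. Put
\[
\Psi_j(u,v):=\rho(N^jTu)(Sv)-S\big(\rho(N^jTu)v\big),\qquad j\geq 0 .
\]
\begin{itemize}
\item Given \eqref{eq:TN1}, condition \eqref{eq:TN2} is exactly the symmetry of $\Psi_0$.
\item Applying \eqref{eq:coNijpair} with $x=N^{j-1}Tu$ gives $\Psi_j(u,v)=\Psi_{j-1}(u,Sv)$.
\item Hence if $\Psi_{j-1}$ is symmetric, then $\Psi_j(u,v)=\Psi_{j-1}(Sv,u)=\Psi_j(v,u)$, because $N^{j-1}TS=N^jT$.
\item A direct expansion gives $[u,v]^{N^kT}_S-[u,v]^{N^{k+1}T}=\Psi_k(u,v)-\Psi_k(v,u)$.
\end{itemize}
Combined with your recursion $[\cdot,\cdot]^T_{S^{k+1}}=([\cdot,\cdot]^T_{S^k})_S$, this yields $[\cdot,\cdot]^{N^kT}=[\cdot,\cdot]^T_{S^k}$ for all $k$. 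It also shows directly that every $(N^kT,S,N)$ satisfies \eqref{eq:TN2}. The same computation with $j=1$ gives $[Su,Sv]^T-S[u,v]^T_S=\Psi_1(u,v)-\Psi_1(v,u)$. This justifies the fact you import from the remark after the definition, namely that $S$ is Nijenhuis on $(V,[\cdot,\cdot]^T)$; the paper states it without proof.

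One correction to your fallback. Condition \eqref{eq:TN2} for $N\circ T$ does not follow from Proposition \ref{pro:TS1}, which only yields that $N\circ T$ is an $\huaO$-operator. It is precisely the symmetry of $\Psi_1$ established above. With that in place, the fallback does work for the first assertion: apply Proposition \ref{pro:TS1}(ii) inductively to the KN-structures $(N^kT,S,N)$ to get every $N^{k+1}T$ as an $\huaO$-operator without using (a). For compatibility you still need (b) or an equivalent mixed identity.
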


\emptycomment{Conversely, compatible $\huaO$-operators  can give rise to {\rm KN}-structures.
\begin{pro}{\rm (\cite{HLS})}\label{pro:ComptoNS}
Let $T,T_1: V\longrightarrow \g$ be two $\huaO$-operators  on
a Lie algebra $\g$ with respect to a representation
$(V;\rho)$. Suppose that $T$ is invertible. If
$T$ and $T_1$ are compatible, then
\begin{itemize}
\item[$\rm(i)$]$(T,S=T^{-1}\circ T_1,N=T_1\circ T^{-1})$ is a {\rm KN}-structure;
\item[$\rm(ii)$]$(T_1,S=T^{-1}\circ T_1,N=T_1\circ T^{-1})$ is a {\rm KN}-structure.
\end{itemize}
\end{pro}}

\begin{pro}\label{pro:hyper-KN}
If $(T,d)$ is a {\rm KD}-structure, then $(T,S=d\circ T,N=T\circ d)$ is a {\rm KN}-structure.
\end{pro}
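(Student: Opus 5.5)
We must verify four things: $N=T\circ d$ is Nijenhuis, the pair $(N,S)$ satisfies the dual-Nijenhuis condition \eqref{eq:coNijpair}, $N\circ T=T\circ S$, and $[u,v]^{N\circ T}=[u,v]^T_S$.

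\emph{Nijenhuis property and the first KN condition.} The first follows from Proposition \ref{pro:hyper-KD-DN}: $(d,N=T\circ d)$ is a DN-structure, so $N$ is a Nijenhuis operator. The identity \eqref{eq:TN1} is immediate: $N\circ T=T\circ d\circ T=T\circ S$.

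\emph{The dual-Nijenhuis condition.} We need
$$\rho(Nx)(Sv)=S(\rho(Nx)v)+\rho(x)(S^2v)-S(\rho(x)(Sv)).$$
Write $S=d\circ T$ and set $y:=Tv$, so that $Sv=dy$ and $S^2v=dTdy=d(Ny)$. Both $d$ and $d_N=d\circ N$ are relative differential operators, so their defining identities convert every $\rho(\cdot)d(\cdot)$ into brackets. We then apply the $\huaO$-operator identity \eqref{eq:RRB} for $T$ to $T\rho(Nx)v$ and $T\rho(x)(Sv)$. Here $Nx=T(dx)$, so $T(\rho(T dx)v-\rho(Tv)dx)=[Nx,Tv]$.

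Concretely:
\begin{itemize}
\item $S(\rho(Nx)v)=d\,T(\rho(Tdx)v)=d\big([Nx,y]+T\rho(y)dx\big)$.
\item $S(\rho(x)Sv)=d\,T\rho(x)dy$.
\item $\rho(x)(S^2v)=\rho(x)d(Ny)=d_N[x,y]+\rho(y)d(Nx)$.
\end{itemize}
The left side is $\rho(Nx)dy=d[Nx,y]+\rho(y)d(Nx)$. Subtracting, the terms $\rho(y)d(Nx)$ and $d[Nx,y]$ cancel. What remains to show is
$$d\,T\big(\rho(x)dy-\rho(y)dx\big)=d\,N[x,y],$$
which holds because $\rho(x)dy-\rho(y)dx=d[x,y]$ and $T\circ d=N$. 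I expect this to go through cleanly, though the bookkeeping is the main place for sign errors.

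\emph{The second KN condition.} For \eqref{eq:TN2}, expand
$$[u,v]^{N\circ T}=\rho(NTu)v-\rho(NTv)u$$
and
$$[u,v]^T_S=\rho(TSu)v+\rho(Tu)Sv-\rho(TSv)u-\rho(Tv)Su-S(\rho(Tu)v-\rho(Tv)u).$$
Since $TS=NT$, the first and third terms match the left side. It therefore remains to show
$$\rho(Tu)Sv-\rho(Tv)Su=S(\rho(Tu)v-\rho(Tv)u).$$
Put $a=Tu$ and $b=Tv$. The left side equals $\rho(a)db-\rho(b)da=d[a,b]$. The right side is $dT(\rho(Ta)\ldots)$; more precisely, since $T$ is an $\huaO$-operator, $T(\rho(Tu)v-\rho(Tv)u)=[a,b]$, so the right side is $d[a,b]$ as well. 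Notably, this step uses only that $d$ is a relative differential operator and $T$ an $\huaO$-operator, not the KD compatibility.

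The main obstacle is the dual-Nijenhuis identity, since it is the only step needing the full KD hypothesis that $d\circ N$ is a relative differential operator. The remaining steps are direct substitutions.
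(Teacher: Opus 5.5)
Your proposal is correct and follows essentially the same route as the paper. The second KN condition is reduced to the same identity $\rho(Tu)(Sv)-\rho(Tv)(Su)=S\big(\rho(Tu)v-\rho(Tv)u\big)$, and your direct subtraction for the dual-Nijenhuis condition uses exactly the ingredients the paper splits into its two intermediate identities: the $\huaO$-operator identity pushed through $d$, and the relative differential property of $d\circ N=S\circ d$. You are also slightly more careful than the paper in explicitly invoking Proposition~\ref{pro:hyper-KD-DN} to get that $N=T\circ d$ is a Nijenhuis operator, which the paper leaves implicit.
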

\begin{proof}
 Let $(T,d)$ be a {\rm KD}-structure.  It is obvious that $N\circ T=T\circ d\circ T=T\circ S$. First we show that $(N,S)$ is a dual-Nijenhuis pair. Since $T$ is an $\huaO$-operator and $d$ is a relative differential operator, we have 
 \begin{eqnarray*}
 dT\big(\rho(Tv) dx-\rho(Tdx) v\big)=d([Tv,Tdx])=\rho(Tv)dTdx-\rho(Tdx)dTv.
 \end{eqnarray*}
 Due to $S=d\circ T$, the above equation implies
 \begin{equation}\label{eq:hyper-KN1}
 S\rho(Tv) dx-S\rho(Tdx) v=\rho(Tv)Sdx-\rho(Tdx)Sv.
 \end{equation}
 Since $S\circ d=d\circ N$ is a relative differential operator, we have
$$S d[x,Tv]=\rho(x)(S dTv)-\rho(Tv)(S dx),$$
which implies that
\begin{align}\label{eq:kncollary2}
\rho(Tv)\big(Sdx\big)-S\big(\rho(Tv)(dx)\big)-\rho(x)\big(S^2(v)\big)+S\big(\rho(x)(Sv)\big)=0.
\end{align}
 Combining Eq. \eqref{eq:hyper-KN1} with Eq. \eqref{eq:kncollary2}, letting $T\circ d=N$, we have
\begin{align*}
\rho({Nx})(Sv)-S\big(\rho({Nx})v\big)=\rho({x})(S^2(v))-S\big(\rho({x})(Sv)\big),
\end{align*}
which means that $(N,S)$ is a dual-Nijenhuis pair.
 
 Since $T$ is an $\huaO$-operator and $d$ is a relative differential operator, we have
\begin{eqnarray*}
&&\rho(Tv)(Su)-\rho(Tu)(Sv)+S\big(\rho(Tu)v-\rho(Tv)u\big)\\
&=&\rho(Tv)(d Tu)-\rho(Tu)(d Tv)+d T\big(\rho(Tu)v-\rho(Tv)u\big)\\
&=&\rho(Tv)(dTu)-\rho(Tu)(dTv)+d[Tu,Tv]=0,
\end{eqnarray*}
which implies that
\begin{equation}\label{eq:KN1}
\rho(Tv)(Su)-\rho(Tu)(Sv)+S\big(\rho(Tu)v-\rho(Tv)u)\big)=0.
\end{equation}
By Eq. \eqref{eq:KN1}, we have
\begin{eqnarray*}
&&{[u,v]}^{N\circ T}-{[u,v]}^{T}_{S}={[u,v]}^{T\circ S}-{[u,v]}^{T}_{S}\\
&=&\rho\big(TS(u)\big)v-\rho\big(TS(v)\big)u-\Big(\rho\big(TS(u)\big)v-\rho(Tv)\big(S(u)\big)\\
&&+\rho(Tu)\big(S(v)\big)-\rho\big(T S(v)\big)u-S\big(\rho(Tu)v-\rho(Tv)u\big)\Big)\\
&=&\rho(Tv)\big(S(u)\big)-\rho(Tu)\big(S(v)\big)+S\big(\rho(Tu)v-\rho(Tv)u\big)\\
&=&0.
\end{eqnarray*}
Therefore, $(T,S=d\circ T,N=T\circ d)$ is a {\rm KN}-structure.
\end{proof}

\begin{thm}\label{thm:hyper-KN}
Let $(d_1,d_2,d_3)$ be an $\varepsilon$-hyper relative differential operator on a Lie algebra $\g$. Then, for all $i,k\in\mathbb{Z}_{3}$ such that $i\neq k$, $(T_i,S_k,N_k)$ are {\rm KN}-structures.
\end{thm}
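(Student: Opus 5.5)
The plan is to reduce Theorem \ref{thm:hyper-KN} to results already proved: Prop. \ref{pro:hyper-KD}, Prop. \ref{pro:hyper-KN} and the composition identities of Prop. \ref{pro:pro2.8}. By Prop. \ref{pro:hyper-KD}, $(T_i,d_j)$ is a KD-structure for every $j\neq i$. Prop. \ref{pro:hyper-KN} then gives that
$$(T_i,\ d_j\circ T_i,\ T_i\circ d_j)$$
is a KN-structure. It remains to identify $d_j\circ T_i$ and $T_i\circ d_j$ with $S_k$ and $N_k$ for the appropriate $k\neq i$, possibly up to a sign.

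Fix $i$ and take $j=i-1$. Item (v) of Prop. \ref{pro:pro2.8}, with $k=i+1$ and index shifted, gives $T_i\circ d_{i-1}=N_{i+1}$. Item (vi) similarly gives $d_{i-1}\circ T_i=S_{i+1}$. Hence $(T_i,S_{i+1},N_{i+1})$ is a KN-structure directly.

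Now take $j=i+1$. Item (v), with $k=i-1$ and shifted indices, gives $T_i\circ d_{i+1}=\varepsilon_{i-1}N_{i-1}$. Item (vi) gives $d_{i+1}\circ T_i=\varepsilon_{i-1}S_{i-1}$. So Prop. \ref{pro:hyper-KN} yields that $(T_i,\varepsilon_{i-1}S_{i-1},\varepsilon_{i-1}N_{i-1})$ is a KN-structure. Since $j$ ranges over both $i-1$ and $i+1$, together these cover all $k\neq i$.

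The only remaining step, which I expect to be the main (though mild) obstacle, is to remove the common sign $c=\varepsilon_{i-1}=\pm1$. That is, I must show that if $(T,cS,cN)$ is a KN-structure, then so is $(T,S,N)$. I will check each defining condition for homogeneity in the pair $(N,S)$.

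\begin{itemize}
\item The Nijenhuis identity \eqref{eq:Nij} is homogeneous of degree $2$ in $N$. Hence $N$ is Nijenhuis if and only if $cN$ is.
\item In the dual-Nijenhuis condition \eqref{eq:coNijpair}, each of the four terms $\rho(Nx)Sv$, $S\rho(Nx)v$, $\rho(x)S^2v$ and $S\rho(x)Sv$ has total degree $2$ in $(N,S)$. So the condition is invariant under $(N,S)\mapsto(cN,cS)$ because $c^2=1$.
\item Condition \eqref{eq:TN1} is linear on both sides.
\item In \eqref{eq:TN2}, $[u,v]^{N\circ T}$ is linear in $N$ and $[u,v]^T_S$ is linear in $S$. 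So both sides scale by $c$.
\end{itemize}

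Therefore $(T_i,S_{i-1},N_{i-1})$ is also a KN-structure, which completes the proof.
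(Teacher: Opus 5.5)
Your proof is correct and follows the same route as the paper, which simply combines Prop.~\ref{pro:hyper-KD} with Prop.~\ref{pro:hyper-KN}. The paper's two-line argument silently identifies the resulting triple $(T_i,\,d_k\circ T_i,\,T_i\circ d_k)$ with $(T_i,S_k,N_k)$. Your relabeling via items (v)--(vi) of Prop.~\ref{pro:pro2.8}, together with your check that the KN conditions are invariant under $(N,S)\mapsto(cN,cS)$ for $c=\pm1$, supplies exactly the index and sign bookkeeping the paper leaves implicit.
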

\begin{proof}
Since $(d_1,d_2,d_3)$ is an $\varepsilon$-hyper relative differential operator, by Prop.  \ref{pro:hyper-KD}, $(T_{i},d_k)$ are {\rm KD}-structures for all $i\neq k\in\mathbb{Z}_{3}$. Furthermore, by Prop. \ref{pro:hyper-KN}, $(T_i,S_k,N_k)$ are {\rm KN}-structures for all $i\neq k\in\mathbb{Z}_{3}$.
\end{proof}

\begin{cor}
	Let $(d_1,d_2,d_3)$ be an $\varepsilon$-hyper relative differential operator on a Lie algebra $\g$. Then, for all $i\neq k\in\mathbb{Z}_{3}$, $N_k\circ T_i$ and 
	$ T_i$ are compatible $\huaO$-operators. In particular, $T_i$ and $T_{j}$ are compatible $\huaO$-operators for all $i,j\in\mathbb{Z}_{3} $.
\end{cor}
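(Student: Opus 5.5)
The plan is to get the first assertion directly from the KN-structures of Theorem~\ref{thm:hyper-KN} together with the hierarchy of Prop.~\ref{pro:hierarchy}. The second assertion then follows by rewriting $N_k\circ T_i$ as $\pm T_j$ via Prop.~\ref{pro:pro2.8}(i) and handling the sign with a bilinearity argument.

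\emph{Step 1 (first assertion).} Fix $i\neq k$. By Theorem~\ref{thm:hyper-KN}, $(T_i,S_k,N_k)$ is a KN-structure with respect to $(V;\rho)$. Prop.~\ref{pro:hierarchy} then says that $N_k^{m}\circ T_i$ and $N_k^{l}\circ T_i$ are compatible $\huaO$-operators. We take $m=0$ and $l=1$, which gives exactly that $T_i$ and $N_k\circ T_i$ are compatible.

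One caveat: $\Nat$ may be read as excluding $0$. In that case I would note that $N_k^0\circ T_i=T_i$ is an $\huaO$-operator anyway. Compatibility of $T_i$ with $N_k\circ T_i$ then follows from the same computation as in \cite{HLS}. Alternatively, one can verify directly that the mixed term below vanishes, using Prop.~\ref{pro:TS1}(ii) together with the KN condition \eqref{eq:TN2}.

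\emph{Step 2 (reduction for $T_i,T_j$).} For two $\huaO$-operators $T,T'$, write the $\huaO$-condition for $aT+bT'$. Since the condition is quadratic, it reads
$$a^2Q(T)+b^2Q(T')+ab\,B(T,T')=0,$$
where
$$Q(T)(u,v)=[Tu,Tv]-T(\rho(Tu)v-\rho(Tv)u)$$
and $B$ is the associated symmetric bilinear expression. Because $Q(T)=Q(T')=0$, compatibility of $T$ and $T'$ is equivalent to $B(T,T')=0$. This condition is invariant under replacing $T'$ by $cT'$ for any nonzero scalar $c$. In particular, compatibility of $T$ with $-T'$ is the same as compatibility of $T$ with $T'$. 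The same homogeneity shows that $(k_1+k_2)T$ is an $\huaO$-operator, which settles the case $i=j$.

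\emph{Step 3 (the case $i\neq j$).} By Prop.~\ref{pro:pro2.8}(i) we have
$$N_{i-1}\circ T_i=T_{i+1},\qquad N_{i+1}\circ T_i=\varepsilon_{i+1}T_{i-1}.$$
So for each $j\neq i$ there is some $k\neq i$ with $N_k\circ T_i=\pm T_j$. By Step~1, $T_i$ is compatible with $\pm T_j$, and by Step~2 the sign is irrelevant. Hence $T_i$ and $T_j$ are compatible.

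The only delicate point is the sign $\varepsilon_{i+1}$, since the definition of compatibility only uses positive coefficients. The bilinear reformulation in Step~2 is what disposes of it.
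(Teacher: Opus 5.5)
Your proposal is correct and follows the paper's own argument. The paper's proof is just the one-line citation of Theorem~\ref{thm:hyper-KN} together with Proposition~\ref{pro:hierarchy}, taking exponents $0$ and $1$. Your Steps 2--3 make explicit what the paper leaves implicit for the ``in particular'' claim: by Proposition~\ref{pro:pro2.8}(i), $N_{i-1}\circ T_i=T_{i+1}$ and $N_{i+1}\circ T_i=\varepsilon_{i+1}T_{i-1}$. The sign $\varepsilon_{i+1}$ is harmless because compatibility is equivalent to the vanishing of the bilinear mixed term.
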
	
\begin{proof}
	It follows by Theorem \ref{thm:hyper-KN} and Proposition \ref{pro:hierarchy}. 
	\end{proof}

\section{\texorpdfstring{$\varepsilon$-hyper relative differential operators with $\varepsilon_1\varepsilon_2\varepsilon_3=1$ and $\varepsilon_1\varepsilon_2\varepsilon_3=-1$}{epsilon-hyper relative differential operators with epsilon1, epsilon2, epsilon3=1 and epsilon1, epsilon2, epsilon3=-1}}\label{sec:2cases}
In this section, we study $\varepsilon$-hyper relative differential operators with $\varepsilon_1\varepsilon_2\varepsilon_3=1$ and $\varepsilon_1\varepsilon_2\varepsilon_3=-1$. More compatible structures  and equivalent descriptions for hyper relative differential operators are given.

\subsection{$\varepsilon$-hyper relative differential operators with $\varepsilon_1\varepsilon_2\varepsilon_3=1$}

 \begin{pro}
	Let $(d_1,d_2,d_3)$ be an $\varepsilon$-hyper relative differential operator on a Lie algebra $\g$ such that $\varepsilon_1\varepsilon_2\varepsilon_3=1$. Then, for all $i\in\mathbb{Z}_{3}$,  $(d_i, N_i)$ are {\rm DN}-structures.
\end{pro}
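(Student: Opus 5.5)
The plan is to verify the two conditions in the definition of a DN-structure for $(d_i,N_i)$. First, $N_i$ is a Nijenhuis operator by Proposition \ref{pro:Nij}. So everything reduces to showing that $D_i:=d_i\circ N_i$ is a relative differential operator.

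By Eq. \eqref{eq:hdnproperty2} we have $d_i\circ N_i=\varepsilon_{i-1}h^\flat$. Hence it suffices to show that the single map $h^\flat$ is a relative differential operator, and then all three $D_i$ are handled at once. A direct reduction via Proposition \ref{pro:pro2.8} (ii), (iii), (v) only shows that $d_1N_1$, $d_2N_2$, $d_3N_3$ are proportional to one another (all are multiples of $h^\flat$). So it is circular, and an actual argument is needed.

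Put
$$E(x,y):=h^\flat[x,y]-\rho(x)h^\flat(y)+\rho(y)h^\flat(x).$$
The first step is to prove, for every $i\in\mathbb{Z}_3$, the antisymmetry relation
$$E(N_ix,y)=-E(x,N_iy).$$
For this, use $d:=d_i$ and $N:=N_i$ with $N^2=\varepsilon_i\Id_\g$, so that $dN=\varepsilon_{i-1}h^\flat$. Applying $d$ to the Nijenhuis identity (with $N^2=\varepsilon_i\Id_\g$ substituted) gives
$$dN\big([Nx,y]+[x,Ny]\big)=\varepsilon_i\, d[x,y]+d[Nx,Ny].$$
Expanding the right-hand side with the relative differential operator property of $d$ yields
$$\varepsilon_i\big(\rho(x)dy-\rho(y)dx\big)+\rho(Nx)dNy-\rho(Ny)dNx.$$
This is exactly the $\rho$-part of $E_{dN}(Nx,y)+E_{dN}(x,Ny)$, once one rewrites $dN\circ N=\varepsilon_i d$. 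Hence $E_{dN}(Nx,y)+E_{dN}(x,Ny)=0$, and dividing by $\varepsilon_{i-1}$ gives the claim for $E$.

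The second step uses the hypothesis $\varepsilon_1\varepsilon_2\varepsilon_3=1$. By Proposition \ref{pro:pro2.8}(iii), $N_i$ and $N_{i+1}$ then commute, and $N_iN_{i+1}=N_{i+1}N_i=c\,N_{i-1}$ with $c=\varepsilon_i\varepsilon_{i+1}=\pm1$. Applying the antisymmetry relation twice gives
$$c\,E(N_{i-1}x,y)=E(N_iN_{i+1}x,y)=-E(N_{i+1}x,N_iy)=E(x,N_{i+1}N_iy)=c\,E(x,N_{i-1}y).$$
On the other hand, antisymmetry for $N_{i-1}$ gives $E(x,N_{i-1}y)=-E(N_{i-1}x,y)$. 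Combining the two, $2E(N_{i-1}x,y)=0$. Since $N_{i-1}$ is invertible (indeed $N_{i-1}^2=\pm\Id_\g$), this forces $E\equiv0$ over a field of characteristic $\neq2$. So $h^\flat$, and hence every $d_i\circ N_i$, is a relative differential operator, which completes the proof.

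The main obstacle is the first step: carefully checking that the expansion of $dN([Nx,y]+[x,Ny])$ matches the $\rho$-terms of $E_{dN}(Nx,y)+E_{dN}(x,Ny)$ term by term, with the correct signs coming from $N^2=\varepsilon_i\Id_\g$. The second step is then a short sign bookkeeping exercise using Proposition \ref{pro:pro2.8}(iii), and it is precisely where $\varepsilon_1\varepsilon_2\varepsilon_3=1$ enters (through commutativity of the $N_i$).
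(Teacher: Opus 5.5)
Your proof is correct, and it takes a genuinely different route from the paper's.

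\textbf{The paper's route.} The paper writes $d_i\circ N_i=S_i\circ d_i$ and substitutes $x=T_iu$, $y=T_iv$. This turns the relative differential operator condition into the vanishing of a quantity $Q_i(u,v)$ on $V$. It then uses three ingredients:
\begin{itemize}
\item the identity $S_i=\varepsilon_i\, d_{i-1}\circ T_{i+1}$;
\item the compatibility of the $\huaO$-operators $T_i$ and $T_{i+1}$, obtained from the KN-structure machinery of Section~\ref{sec:compatible} and an external result on compatible $\huaO$-operators;
\item the cocycle property of $d_{i-1}$.
\end{itemize}
From these it shows $Q_i=-\varepsilon_i Q_{i+1}$. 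Going once around $\mathbb{Z}_3$ gives $Q_i=-\varepsilon_1\varepsilon_2\varepsilon_3\, Q_i$, hence $Q_i=0$.

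\textbf{Your route.} You stay entirely on $\g$. You first note that all three maps $d_i\circ N_i$ are scalar multiples of the single map $h^\flat$. Your first step is the identity $E_{dN}(Nx,y)+E_{dN}(x,Ny)=E_{dN^2}(x,y)$, valid for any relative differential operator $d$ and Nijenhuis operator $N$. This gives $N_i$-antisymmetry of the defect $E$ of $h^\flat$ for every $i$, with no hypothesis on $\varepsilon_1\varepsilon_2\varepsilon_3$. You then play the antisymmetry for $N_i$ and $N_{i+1}$ against the antisymmetry for $N_{i-1}$, using $N_iN_{i+1}=N_{i+1}N_i=\varepsilon_i\varepsilon_{i+1}N_{i-1}$ from Proposition~\ref{pro:pro2.8}(iii). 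I checked the signs in both steps, including the relations in (iii), and they are right.

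\textbf{Comparison.} Your argument is more self-contained: it needs only Proposition~\ref{pro:Nij} and Proposition~\ref{pro:pro2.8}(iii), not the compatibility results on $\huaO$-operators. It also proves directly that $h^\flat$ is a relative differential operator, which the paper only obtains as a later corollary. It also makes transparent where $\varepsilon_1\varepsilon_2\varepsilon_3=1$ enters, namely the commutativity of the $N_i$. When the product is $-1$, the $N_i$ anticommute, the two symmetry relations are consistent, and nothing follows. The paper's argument, in exchange, ties the result to the KN/$\huaO$-operator framework used throughout Section~\ref{sec:compatible}. Both proofs divide by $2$ at the end, so both implicitly assume characteristic $\neq 2$.
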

\begin{proof}
In order to prove that $(d_i, N_i)$ is a {\rm DN}-structure for any $i\in \mathbb{Z}_{3}$, we only need to prove that $d_i\circ N_i$ is a relative differential operator; that is, $d_i N_i [x,y]=\rho(x)(d_i N_i y)-\rho(y)(d_i N_i x)$ for all  $x,y\in \g$. Since $\varepsilon_1\varepsilon_2\varepsilon_3=1$, by Eq. \eqref{eq:hdnproperty1}, we have $d_i\circ N_i=S_i\circ d_i$. Letting $x=T_iu$ and $y=T_iv$, we have
\begin{eqnarray*}
  &&d_i N_i [x,y]-\rho(x)(d_i N_i y)+\rho(y)(d_i N_i x)\\
  &=&S_id_i[T_iu,T_iv]-\rho(T_iu)(S_i d_i T_iv)+\rho(T_iv)(S_id_i T_iu)\\
  &=&S_i\rho(T_iu )v-S_i\rho(T_iv)u-\rho(T_i u)(S_i v)+\rho(T_i v)(S_i u),
\end{eqnarray*}
which means that  $d_i\circ N_i$ is a relative differential operator if and only if the following equation holds:
\begin{equation}\label{eq:important relation}
S_i\rho(T_iu )v-S_i\rho (T_iv)u-\rho(T_i u)(S_i v)+\rho(T_i v)(S_i u)=0.
\end{equation}
By (vi) in Prop. \ref{pro:pro2.8}, we have $S_i=\varepsilon_id_{i-1}\circ T_{i+1}$. Considering that $T_i$ and $T_{i+1}$ are compatible and $d_{i-1}$ is a relative differential operator, and using  \cite[Proposition 4.2]{HLS}, we obtain 
\begin{eqnarray*}
[T_iu,T_{i+1}v]+[T_{i+1}u,T_iv]=T_i\big(\rho(T_{i+1}u)v-\rho(T_{i+1}v)u\big)+T_{i+1}\big(\rho(T_{i}u)v-\rho(T_{i}v)u\big),
\end{eqnarray*}
which implies
\begin{eqnarray*}
d_{i-1}[T_iu,T_{i+1}v]-d_{i-1}[T_iv, T_{i+1}u]=d_{i-1}T_i\big(\rho(T_{i+1}u)v-\rho(T_{i+1}v)u\big)+d_{i-1}T_{i+1}\big(\rho(T_{i}u)v-\rho(T_{i}v)u\big).
\end{eqnarray*}
Thus, 
\begin{eqnarray*}
&&S_i\rho(T_iu )v-S_i(\rho T_iv)u-\rho(T_i u)(S_i v)+\rho(T_i v)(S_i u)\\
&=&\varepsilon_i\Big( d_{i-1} T_{i+1} \rho(T_iu )v-d_{i-1} T_{i+1} \rho(T_iv )u-\rho(T_i u)(d_{i-1} T_{i+1} v)+\rho(T_i v)(d_{i-1} T_{i+1} u)  \Big)\\
&=&\varepsilon_i\Big( d_{i-1} T_{i+1} \rho(T_iu )v-d_{i-1} T_{i+1} \rho(T_iv )u-d_{i-1}[T_iu,T_{i+1} v]-\rho(T_{i+1} v)(d_{i-1} T_{i} u)\\
&&+d_{i-1}[T_iv,T_{i+1} u]+\rho(T_{i+1} u)(d_{i-1} T_{i} v) \Big)\\
&=& \varepsilon_i\Big(d_{i-1} T_{i+1} \rho(T_iu )v-d_{i-1} T_{i+1} \rho(T_iv )u-d_{i-1}T_i\big(\rho(T_{i+1}u)v-\rho(T_{i+1}v)u\big)\\
&&-d_{i-1}T_{i+1}\big(\rho(T_{i}u)v-\rho(T_{i}v)u\big)-\rho(T_{i+1} v)(d_{i-1} T_{i} u)+\rho(T_{i+1} u)(d_{i-1} T_{i} v)\Big)\\
&=&-\varepsilon_i\Big( d_{i-1} T_{i} \rho(T_{i+1}u )v-d_{i-1} T_{i} \rho(T_{i+1}v )u+\rho(T_{i+1} v)(d_{i-1} T_{i} u)-\rho(T_{i+1} u)(d_{i-1} T_{i} v) \Big)\\
&=&-\varepsilon_i\Big(S_{i+1}\rho(T_{i+1}u )v-S_{i+1}\rho(T_{i+1}v)u-\rho(T_{i+1} u)(S_{i+1} v)+\rho(T_{i+1} v)(S_{i+1} u)  \Big)\\
&=&\varepsilon_i\varepsilon_{i+1}\Big(S_{i+2}\rho(T_{i+2}u )v-S_{i+2}\rho(T_{i+2}v)u-\rho(T_{i+2} u)(S_{i+2} v)+\rho(T_{i+2} v)(S_{i+2} u)  \Big)\\
&=&-\varepsilon_i\varepsilon_{i+1}\varepsilon_{i+2}\Big(S_{i}\rho(T_{i}u )v-S_{i}\rho(T_{i}v)u-\rho(T_{i} u)(S_{i} v)+\rho(T_{i} v)(S_{i} u)  \Big).
\end{eqnarray*}
Due to $\varepsilon_1\varepsilon_2\varepsilon_3=1$, we obtain $S_i\rho(T_iu )v-S_i\rho(T_iv)u-\rho(T_i u)(S_i v)+\rho(T_i v)(S_i u)=0.$ Therefore, for any $i\in \mathbb{Z}_{3}$, $(d_i, N_i)$ is a {\rm DN}-structure. 
\end{proof}
\begin{cor}
Let $(d_1,d_2,d_3)$ be an $\varepsilon$-hyper relative differential operator on a Lie algebra $\g$ such that $\varepsilon_1\varepsilon_2\varepsilon_3=1$. Then, for all $i\in\mathbb{Z}_{3}$, $(K_i=N_i\circ T_i,d_i)$ is a {\rm KD}-structure and
  $(T_i,S_i,N_i)$ are {\rm KN}-structures.
\end{cor}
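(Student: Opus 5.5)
The plan is to feed the DN-structure just obtained into the two general transfer results of Section \ref{sec:compatible}: first Prop. \ref{pro:hyper-KD-DN} to pass from DN to KD, then Prop. \ref{pro:hyper-KN} to pass from KD to KN. Only an identification of the resulting operators with $T_i$, $S_i$, $N_i$ remains.

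\textbf{Step 1 (KD-structure).} By the preceding proposition, $(d_i,N_i)$ is a DN-structure for every $i\in\mathbb{Z}_3$ when $\varepsilon_1\varepsilon_2\varepsilon_3=1$. Since $d_i$ is invertible, the first half of Prop. \ref{pro:hyper-KD-DN} applies. It shows that $(N_i\circ d_i^{-1},d_i)=(N_i\circ T_i,d_i)=(K_i,d_i)$ is a KD-structure, which is the first assertion. For later use, Eq. \eqref{eq:hdnproperty1} with $\varepsilon_1\varepsilon_2\varepsilon_3=1$ gives
\[
K_i=N_i\circ T_i=T_i\circ S_i=\varepsilon_{i+1}(h^\flat)^{-1}.
\]

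\textbf{Step 2 (KN-structure).} I apply Prop. \ref{pro:hyper-KN} to the pair $(T_i,d)$ with $d:=d_i\circ N_i=S_i\circ d_i$. The last equality is Eq. \eqref{eq:hdnproperty2} with product $1$. For this I first need $(T_i,d_iN_i)$ to be a KD-structure, which requires three things.
\begin{itemize}
\item[(a)] $d_iN_i$ is a relative differential operator. This is exactly the DN statement just proved.
\item[(b)] $T_i$ is an $\huaO$-operator. This holds because $d_i$ is an invertible relative differential operator.
\item[(c)] $d_iN_i\circ(T_i\circ d_iN_i)$ is a relative differential operator. Since $T_i\circ d_i N_i=N_i$, this composite equals $d_i\circ N_i^2=\varepsilon_i d_i$, which is a relative differential operator.
\end{itemize}
Prop. \ref{pro:hyper-KN} then yields the KN-structure
\[
(T_i,\;S'=d_iN_i\circ T_i,\;N'=T_i\circ d_iN_i).
\]
Here $N'=N_i$, and $S'=S_i\circ d_i\circ T_i=S_i$. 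So $(T_i,S_i,N_i)$ is a KN-structure, as claimed.

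The main care point is the bookkeeping of signs in Eqs. \eqref{eq:hdnproperty1}--\eqref{eq:hdnproperty2}. The identities $d_iN_i=S_id_i$ and $N_iT_i=T_iS_i$ are exactly where the hypothesis $\varepsilon_1\varepsilon_2\varepsilon_3=1$ enters. Without it, $S'$ would equal $-S_i$ and the statement would have to be modified.
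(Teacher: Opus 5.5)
Your proof is correct and follows the paper's route: you start from the DN-structure $(d_i,N_i)$ of the preceding proposition, use Prop.~\ref{pro:hyper-KD-DN} to get the KD-structure $(K_i,d_i)$, and then apply Prop.~\ref{pro:hyper-KN}. It is in fact more careful than the paper's one-line proof, because Prop.~\ref{pro:hyper-KN} applied literally to $(K_i,d_i)$ would instead give $(K_i,S_i,N_i)$; as you check, one really needs that $(T_i,d_i\circ N_i)$ is a KD-structure and that $d_i\circ N_i\circ T_i=S_i$ when $\varepsilon_1\varepsilon_2\varepsilon_3=1$. Your closing remark misplaces the role of the hypothesis, though: without $\varepsilon_1\varepsilon_2\varepsilon_3=1$ it is already your item (a) that can fail, since $d_i\circ N_i=\varepsilon_{i-1}h^\flat$ need not be a relative differential operator, so the problem is not merely a sign change in $S'$.
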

\begin{proof}
Since $(d_i, N_i)$ is a {\rm DN}-structure, for any $i\in \mathbb{Z}_{3}$, by Prop. \ref{pro:hyper-KD-DN},   $(K_i=N_i\circ T_i,d_i)$ is a {\rm KD}-structure. Furthermore, by Prop. \ref{pro:hyper-KN}, $(T_i,S_i,N_i)$ is a {\rm KN}-structure.
\end{proof}

\begin{cor}
	Let $(d_1,d_2,d_3)$ be an $\varepsilon$-hyper relative differential operator on a Lie algebra $\g$ such that $\varepsilon_1\varepsilon_2\varepsilon_3=1$. Then, $h^\flat:\g\to V$ is a relative differential operator. 
\end{cor}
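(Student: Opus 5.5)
The plan is to deduce the statement from the preceding proposition, which shows that $(d_i,N_i)$ is a DN-structure whenever $\varepsilon_1\varepsilon_2\varepsilon_3=1$. By definition of a DN-structure, $d_i\circ N_i$ is then a relative differential operator with respect to $(V;\rho)$ for every $i\in\mathbb{Z}_3$. Equation \eqref{eq:hdnproperty2} of Proposition \ref{pro:important property} identifies this composite:
$$d_i\circ N_i=\varepsilon_{i-1}h^\flat.$$
Since $\varepsilon_{i-1}=\pm1$, we can multiply by it to get
$$h^\flat=\varepsilon_{i-1}\, d_i\circ N_i.$$
The set of relative differential operators is a linear subspace of $\Hom(\g,V)$, because Eq. \eqref{eq:RDO} is linear in $d$. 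Hence $h^\flat$ is a relative differential operator, and it suffices to take any one index, say $i=1$.

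As a sanity check, I would also confirm the identity $d_i\circ N_i=\varepsilon_{i-1}h^\flat$ directly. First, $d_i\circ N_i=d_i\circ T_{i-1}\circ d_{i+1}$ by definition of $N_i$. Next, Eq. \eqref{eq:dtproperty1} converts $d_i\circ T_{i-1}$ into $\varepsilon_{i+1}\, d_{i-1}\circ T_i$. Finally, comparing with Eq. \eqref{eq:hproperty1} gives the claimed identity. This is exactly the computation the paper already carried out in the proof of Proposition \ref{pro:important property}, so nothing new is needed.

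There is no real obstacle here: the substantive work, namely verifying Eq. \eqref{eq:important relation} using the compatibility of $T_i$ and $T_{i+1}$ and the condition $\varepsilon_1\varepsilon_2\varepsilon_3=1$, is already contained in the preceding proposition. The only point requiring care is bookkeeping, namely using the correct sign $\varepsilon_{i-1}$ from Eq. \eqref{eq:hdnproperty2}. Even that does not matter for the conclusion, since any nonzero scalar multiple of a relative differential operator is again one.
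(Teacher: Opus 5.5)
Your proof is correct and uses the same key input as the paper, namely the preceding proposition that $(d_i,N_i)$ is a DN-structure when $\varepsilon_1\varepsilon_2\varepsilon_3=1$. The paper takes a detour:
\begin{itemize}
\item it invokes the preceding corollary that $(K_3=N_3\circ T_3,d_3)$ is a KD-structure, which comes from $(d_3,N_3)$ via Proposition \ref{pro:hyper-KD-DN};
\item it notes that $K_3=T_2\circ d_1\circ T_3=\varepsilon_2\varepsilon_3(h^\flat)^{-1}$ is an $\huaO$-operator;
\item it then inverts, using that the inverse of an invertible $\huaO$-operator is a relative differential operator.
\end{itemize}
Since $N_3^{-1}=\varepsilon_3N_3$, one has $K_3^{-1}=\varepsilon_3\,d_3\circ N_3=\varepsilon_2\varepsilon_3h^\flat$, so the paper's argument unwinds exactly to yours. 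Your version is shorter: it reads the conclusion directly off the definition of a DN-structure and Eq. \eqref{eq:hdnproperty2}, with no need for the KD/DN correspondence or the passage through $\huaO$-operators.
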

\begin{proof}
Since $(K_3=N_3\circ T_3,d_3)$ is a {\rm KD}-structure, $K_3=N_3\circ T_3=T_2\circ d_1\circ T_3=\varepsilon_2\varepsilon_3 (h^\flat)^{-1}$ is an $\huaO$-operator and thus $h^\flat:\g\to V$ is a relative differential operator.
\end{proof}

\begin{pro}
	Let $(d_1,d_2,d_3)$ be an $\varepsilon$-hyper relative differential operator on a Lie algebra $\g$ such that $\varepsilon_1\varepsilon_2\varepsilon_3=1$. Then, for all $i\in\mathbb{Z}_{3}$,  $(h^\flat,T_i\circ h^\flat=\varepsilon_{i-1}N_i)$ and $(d_i, (h^\flat)^{-1}\circ d_i=\varepsilon_{i+1}N_i)$ are {\rm DN}-structures. In particular, $(h^\flat,N_i)$ are also {\rm DN}-structures.
\end{pro}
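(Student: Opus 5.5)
The plan is to reduce everything to facts already established for $\varepsilon_1\varepsilon_2\varepsilon_3=1$. These are: $h^\flat$ is a relative differential operator (previous corollary), $(d_i,N_i)$ is a DN-structure for every $i$, and each $N_i$ is a Nijenhuis operator by Prop. \ref{pro:Nij}. We then identify the stated compositions with $\pm N_i$ via Prop. \ref{pro:important property}.

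\emph{Identifying the compositions.} From Eq. \eqref{eq:hdnproperty1}, $N_i\circ T_i=\varepsilon_1\varepsilon_2\varepsilon_3\,\varepsilon_{i+1}(h^\flat)^{-1}=\varepsilon_{i+1}(h^\flat)^{-1}$. Inverting gives $(h^\flat)^{-1}\circ d_i=\varepsilon_{i+1}N_i$. Similarly, Eq. \eqref{eq:hdnproperty2} gives $d_i\circ N_i=\varepsilon_{i-1}h^\flat$, so $T_i\circ h^\flat=\varepsilon_{i-1}N_i$. Since a scalar multiple of a Nijenhuis operator is again Nijenhuis (both sides of Eq. \eqref{eq:Nij} scale by $\lambda^2$), both $T_i\circ h^\flat$ and $(h^\flat)^{-1}\circ d_i$ are Nijenhuis.

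\emph{DN conditions.} For $(d_i,\varepsilon_{i+1}N_i)$ we need $d_i\circ\varepsilon_{i+1}N_i$ to be a relative differential operator. This holds because $d_i\circ N_i$ is one, by the DN-structure $(d_i,N_i)$, and relative differential operators form a linear space. Alternatively, $d_i\circ N_i=\varepsilon_{i-1}h^\flat$ together with the corollary suffices. For $(h^\flat,\varepsilon_{i-1}N_i)$ we need $h^\flat\circ N_i$ to be a relative differential operator. By Eq. \eqref{eq:hdnproperty3}, $h^\flat\circ N_i=\varepsilon_i\varepsilon_{i-1}d_i$, which is a scalar multiple of the relative differential operator $d_i$. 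So $(h^\flat,\varepsilon_{i-1}N_i)$ is a DN-structure.

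\emph{The ``in particular'' claim.} Since $\varepsilon_{i-1}=\pm1$, the statement for $(h^\flat,N_i)$ follows because $h^\flat\circ N_i$ is itself a relative differential operator, as above.

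The only real point to check is the sign bookkeeping in Eqs. \eqref{eq:hdnproperty1}--\eqref{eq:hdnproperty3} with $\varepsilon_1\varepsilon_2\varepsilon_3=1$, in particular that inverting $N_i\circ T_i=\varepsilon_{i+1}(h^\flat)^{-1}$ correctly yields $(h^\flat)^{-1}\circ d_i=\varepsilon_{i+1}N_i$. Here one uses $N_i^{-1}=\varepsilon_iN_i$ and $\varepsilon_i\varepsilon_{i+1}\varepsilon_{i-1}=1$. Beyond that, the argument is immediate.
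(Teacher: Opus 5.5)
Your proposal is correct and follows essentially the same route as the paper. You identify $T_i\circ h^\flat$ and $(h^\flat)^{-1}\circ d_i$ with $\pm N_i$ via Eqs. \eqref{eq:hdnproperty1}--\eqref{eq:hdnproperty3}, then reduce the DN conditions to $d_i$ being a relative differential operator and to $(d_i,N_i)$ being a DN-structure. You are slightly more careful than the paper in explicitly citing the previous corollary that $h^\flat$ is itself a relative differential operator, which the definition of a DN-structure requires.

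One small wording point: $(h^\flat)^{-1}\circ d_i=\varepsilon_{i+1}N_i$ comes from composing $N_i\circ T_i=\varepsilon_{i+1}(h^\flat)^{-1}$ on the right with $d_i$ (or directly from Eq. \eqref{eq:hdnproperty3}), not from inverting it, so you do not need $N_i^{-1}=\varepsilon_iN_i$.
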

\begin{proof}
By Eq. \eqref{eq:hdnproperty2}, we have $T_i\circ h^\flat=\varepsilon_{i-1}N_i$ and thus $T_i\circ h^\flat$ is a Nijenhuis operator on the Lie algebra $\g$. Furthermore, by Eq. \eqref{eq:hdnproperty3}, we have $h^\flat\circ T_i\circ h^\flat=\varepsilon_{i-1}h^\flat\circ N_i=\varepsilon_i d_i$ and thus $h^\flat\circ T_i\circ h^\flat$ is a relative differential operator on the Lie algebra $\g$. Therefore, for any $i\in\mathbb{Z}_{3}$,  $(h^\flat,T_i\circ h^\flat)$ is a {\rm DN}-structure.

By Eq. \eqref{eq:hdnproperty3}, we have $(h^\flat)^{-1}\circ d_i=\varepsilon_{i+1}N_i$ and thus $(h^\flat)^{-1}\circ d_i$ is a Nijenhuis operator on the Lie algebra $\g$. Furthermore, we have $d_i\circ (h^\flat)^{-1}\circ d_i=\varepsilon_{i+1}d_i\circ N_i$. Since $(d_i, N_i)$ is a {\rm DN}-structure, $d_i\circ N_i$ is a relative differential operator and thus $d_i\circ (h^\flat)^{-1}\circ d_i$ is a relative differential operator. Therefore, for any $i\in\mathbb{Z}_{3}$,  $(h^\flat,T_i\circ h^\flat)$ is a {\rm DN}-structure.
\end{proof}
By Prop. \ref{pro:hyper-KD-DN} and Prop. \ref{pro:hyper-KN}, we have 
\begin{cor}
Let $(d_1,d_2,d_3)$ be an $\varepsilon$-hyper relative differential operator on a Lie algebra $\g$ such that $\varepsilon_1\varepsilon_2\varepsilon_3=1$. Then, for all $i\in\mathbb{Z}_{3}$,  $(T_i,h^\flat)$ and $((h^\flat)^{-1},d_i)$ are {\rm KD}-structures. Furthermore, $(T_i,h^\flat\circ T_i=\varepsilon_{i-1}S_i,T_i \circ h^\flat=\varepsilon_{i-1}N_i)$ and $((h^\flat)^{-1},d_i\circ (h^\flat)^{-1}=\varepsilon_{i+1}S_i,(h^\flat)^{-1}\circ d_i=\varepsilon_{i+1}N_i)$ are {\rm KN}-structures. In particular, $((h^\flat)^{-1},S_i,N_i)$ are also {\rm KN}-structures. 
\end{cor}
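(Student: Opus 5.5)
The corollary is formal once the last proposition is in hand: it supplies DN-structures, Prop. \ref{pro:hyper-KD-DN} turns these into KD-structures, and Prop. \ref{pro:hyper-KN} turns those into KN-structures. What remains is to identify the resulting operators using Eqs. \eqref{eq:hdnproperty1}--\eqref{eq:hdnproperty3} with $\varepsilon_1\varepsilon_2\varepsilon_3=1$. Throughout, $h^\flat$ is invertible, being a composite of invertible maps, and it is a relative differential operator by the preceding corollary.

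\textbf{Step 1: the KD-structure $(T_i,h^\flat)$.} Start from the DN-structure $(h^\flat,\varepsilon_{i-1}N_i)$ of the previous proposition, where $\varepsilon_{i-1}N_i=T_i\circ h^\flat$. Apply the first half of Prop. \ref{pro:hyper-KD-DN} with $d=h^\flat$ and $N=T_i\circ h^\flat$. This shows that $(N\circ(h^\flat)^{-1},h^\flat)=(T_i,h^\flat)$ is a KD-structure.

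\textbf{Step 2: the KD-structure $((h^\flat)^{-1},d_i)$.} Start from the DN-structure $(d_i,(h^\flat)^{-1}\circ d_i)$. Prop. \ref{pro:hyper-KD-DN} gives the KD-structure $\big((h^\flat)^{-1}\circ d_i\circ d_i^{-1},d_i\big)=((h^\flat)^{-1},d_i)$.

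\textbf{Step 3: the KN-structures.} Prop. \ref{pro:hyper-KN} applied to $(T_i,h^\flat)$ yields the KN-structure
\[
(T_i,\; h^\flat\circ T_i,\; T_i\circ h^\flat).
\]
Eq. \eqref{eq:hdnproperty2} gives $T_i\circ h^\flat=\varepsilon_{i-1}N_i$. For the $S$-component, Eq. \eqref{eq:hdnproperty2} with $\varepsilon_1\varepsilon_2\varepsilon_3=1$ gives $S_i\circ d_i=\varepsilon_{i-1}h^\flat$. Composing on the right with $T_i$ gives $h^\flat\circ T_i=\varepsilon_{i-1}S_i$.

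Similarly, Prop. \ref{pro:hyper-KN} applied to $((h^\flat)^{-1},d_i)$ yields the KN-structure
\[
\big((h^\flat)^{-1},\; d_i\circ(h^\flat)^{-1},\; (h^\flat)^{-1}\circ d_i\big).
\]
Eq. \eqref{eq:hdnproperty3} gives $(h^\flat)^{-1}\circ d_i=\varepsilon_{i+1}N_i$. Eq. \eqref{eq:hdnproperty1} gives $T_i\circ S_i=\varepsilon_{i+1}(h^\flat)^{-1}$, so $d_i\circ(h^\flat)^{-1}=\varepsilon_{i+1}S_i$.

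\textbf{Step 4: the final claim.} It remains to see that $((h^\flat)^{-1},S_i,N_i)$ is a KN-structure, i.e. to rescale $S$ and $N$ simultaneously by $\varepsilon=\varepsilon_{i+1}=\pm1$. Condition \eqref{eq:TN1} is linear in $(S,N)$, so it survives the rescaling. The Nijenhuis condition for $N$ is homogeneous of degree two, so it is invariant under $N\mapsto\varepsilon N$. The dual-Nijenhuis condition \eqref{eq:coNijpair} has every term of total degree two in $(N,S)$, so it is preserved. In Eq. \eqref{eq:TN2} both sides are linear in $(S,N)$, so it is preserved as well.

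\textbf{Main obstacle.} No step is genuinely hard; the risk lies in Step 4 and in the sign bookkeeping. For Step 4, I would check explicitly that each defining condition of a KN-structure is homogeneous in $(N,S)$, as above. For the signs, I would verify that every identity is read off from the correct equation among \eqref{eq:hdnproperty1}--\eqref{eq:hdnproperty3} in the case $\varepsilon_1\varepsilon_2\varepsilon_3=1$.
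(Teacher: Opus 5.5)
Your proposal is correct and takes the paper's route: the paper starts from the preceding proposition's DN-structures $(h^\flat,T_i\circ h^\flat)$ and $(d_i,(h^\flat)^{-1}\circ d_i)$, converts them to KD-structures via Prop.~\ref{pro:hyper-KD-DN}, then to KN-structures via Prop.~\ref{pro:hyper-KN}, and reads the sign identifications off Eqs.~\eqref{eq:hdnproperty1}--\eqref{eq:hdnproperty3}. Your Step~4, which checks that every KN-condition is invariant under $(S,N)\mapsto(\varepsilon S,\varepsilon N)$ with $\varepsilon=\pm1$, correctly justifies the final ``in particular'' claim; the paper leaves that step implicit.
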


\begin{cor}
	Let $(d_1,d_2,d_3)$ be an $\varepsilon$-hyper relative differential operator on a Lie algebra $\g$ such that $\varepsilon_1\varepsilon_2\varepsilon_3=1$. Then, for all $i\in\mathbb{Z}_{3}$,  $T_i$  and  $(h^\flat)^{-1}$ are compatible.
\end{cor}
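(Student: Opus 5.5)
The corollary should follow from the KN-structure just obtained, combined with the hierarchy result in Proposition~\ref{pro:hierarchy}. That is the same route the paper takes for the earlier corollary on compatibility of the $T_i$.

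By the preceding corollary, for each $i\in\mathbb{Z}_3$ the triple
\[
\bigl(T_i,\; h^\flat\circ T_i=\varepsilon_{i-1}S_i,\; T_i\circ h^\flat=\varepsilon_{i-1}N_i\bigr)
\]
is a KN-structure. Applying Proposition~\ref{pro:hierarchy} to it, $T_i$ and $N\circ T_i$ are compatible $\huaO$-operators, where $N=T_i\circ h^\flat$. Since $N\circ T_i=T_i\circ h^\flat\circ T_i$, this does not directly give $(h^\flat)^{-1}$.

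A better choice is the other KN-structure from the same corollary,
\[
\bigl((h^\flat)^{-1},\; \varepsilon_{i+1}S_i,\; \varepsilon_{i+1}N_i\bigr).
\]
By Proposition~\ref{pro:hierarchy}, $(h^\flat)^{-1}$ and $\varepsilon_{i+1}N_i\circ(h^\flat)^{-1}$ are compatible. Equation \eqref{eq:hdnproperty1} gives $(h^\flat)^{-1}=\varepsilon_{i+1}T_i\circ S_i$, and with $\varepsilon_1\varepsilon_2\varepsilon_3=1$ also $T_i\circ S_i=N_i\circ T_i$. Combining these with \eqref{eq:hdnproperty3} (or \eqref{eq:hdnproperty2}), I compute
\[
N_i\circ(h^\flat)^{-1}=\varepsilon_{i}\varepsilon_{i-1}\,T_i .
\]
As a check: $h^\flat\circ N_i=\varepsilon_i\varepsilon_{i-1}d_i$, so $N_i=\varepsilon_i\varepsilon_{i-1}(h^\flat)^{-1}d_i$. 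Since $N_i$ commutes with $(h^\flat)^{-1}h^\flat$ appropriately in this case, $N_i(h^\flat)^{-1}$ should be $\pm T_i$. To avoid any commutation issue, I would instead verify directly that
\[
(h^\flat)^{-1}\circ S_i=\pm T_i,
\]
using $S_i^2=\varepsilon_i\Id_V$ together with $(h^\flat)^{-1}=\varepsilon_{i+1}T_iS_i$, and then use $N_i\circ(h^\flat)^{-1}=(h^\flat)^{-1}\circ S_i$ (property \eqref{eq:TN1} of the KN-structure). So $N\circ(h^\flat)^{-1}=\pm T_i$ in any case.

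Hence $(h^\flat)^{-1}$ and $\pm T_i$ are compatible, that is, $k_1(h^\flat)^{-1}\pm k_2T_i$ is an $\huaO$-operator for all $k_1,k_2$.

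The remaining concern is the sign. Compatibility as defined uses positive coefficients $k_1,k_2$. However, the $\huaO$-operator condition for a sum $A+B$ of two $\huaO$-operators reduces to a bilinear cross-term identity, namely
\[
[Au,Bv]+[Bu,Av]=A\bigl(\rho(Bu)v-\rho(Bv)u\bigr)+B\bigl(\rho(Au)v-\rho(Av)u\bigr).
\]
This identity is invariant under $B\mapsto -B$, and $-T_i$ is itself an $\huaO$-operator. So compatibility with $-T_i$ is equivalent to compatibility with $T_i$. Settling this sign bookkeeping is the only delicate point; everything else is a direct application of Proposition~\ref{pro:hierarchy}.
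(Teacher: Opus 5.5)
Your proof is correct and is essentially the paper's. Both take the KN-structure $((h^\flat)^{-1},\pm S_i,\pm N_i)$ from the preceding corollary, apply Proposition~\ref{pro:hierarchy}, and use $N_i\circ(h^\flat)^{-1}=\pm T_i$; your bilinearity remark makes explicit the sign step the paper passes over. One harmless slip: the sign is $\varepsilon_i\varepsilon_{i+1}$, not $\varepsilon_i\varepsilon_{i-1}$, as your own check $(h^\flat)^{-1}\circ S_i=\varepsilon_{i+1}T_i\circ S_i^2=\varepsilon_i\varepsilon_{i+1}T_i$ shows.
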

\begin{proof}
Since for any $i\in\mathbb{Z}_{3}$, $((h^\flat)^{-1},S_i,N_i)$ is a {\rm KN}-structure,  by Prop. \ref{pro:hierarchy}, we have $(h^\flat)^{-1}$ and $N_i\circ (h^\flat)^{-1}$ are compatible. Also, by Eq.\eqref{eq:hdnproperty1}, we have $N_i\circ (h^\flat)^{-1}=\varepsilon_i\varepsilon_{i+1} T_i$. Thus $(h^\flat)^{-1}$ and $T_i$ are compatible. 
	\end{proof}

\subsection{$\varepsilon$-hyper relative differential operators with $\varepsilon_1\varepsilon_2\varepsilon_3=-1$}

In this section, we discuss two kinds of  $\varepsilon$-hyper relative differential operators with $\varepsilon_1\varepsilon_2\varepsilon_3=-1$. 
\begin{defi}
Let $(d_1,d_2,d_3)$ be an $\varepsilon$-hyper relative differential operator on a Lie algebra $\g$ such that $\varepsilon_1\varepsilon_2\varepsilon_3=-1$. If $\varepsilon_1=\varepsilon_2=\varepsilon_3=-1$, then  $(d_1,d_2,d_3)$ is called a \emph{hyper relative differential operator} on $\g$, while otherwise, $(d_1,d_2,d_3)$ is called a \emph{para-hyper relative differential operator} on $\g$.
\end{defi}

With a cyclic permutation of the indices, all para-hyper relative differential operators satisfy $\varepsilon_1=\varepsilon_2=1$ and $\varepsilon_3=-1$. Thus,  we just need to study para-hyper relative differential operators of this form in the following sections. 

\begin{thm}\label{thm:equivalence}
		Let $\g$ be a Lie algebra and $V$ be a vector space.   Then  $(d_1,d_2,d_3)$ is a (para)-hyper relative differential operator if and only if there exist an invertible linear map $h^\flat:\g\to V$ and (para-)complex structures $I_i:\g\to\g$  for $i=1,2$ satisfying $I_1\circ I_2=-I_2\circ I_1,$ and $d_i=h^\flat\circ I_i$ are relative differential operators  for all $i=1,2,3$ with $I_3=I_1\circ I_2$. 
	\end{thm}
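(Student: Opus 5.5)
The plan is to prove both directions by direct algebraic manipulation. I rely on the identities of Proposition \ref{pro:important property} and Proposition \ref{pro:pro2.8} for the forward direction, and on a short sign computation for the converse. Throughout I assume $\varepsilon=(-1,-1,-1)$ in the hyper case and $\varepsilon=(1,1,-1)$ in the para-hyper case, as the paper allows after a cyclic permutation of the indices.

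\emph{Forward direction.} Let $(d_1,d_2,d_3)$ be a (para-)hyper relative differential operator, take $h^\flat=\varepsilon_3\varepsilon_2\,d_3\circ T_1\circ d_2$ as in \eqref{eq:hyper-metric}, and set $I_i:=(h^\flat)^{-1}\circ d_i$. Then $h^\flat$ is invertible and $d_i=h^\flat\circ I_i$ holds by construction. By \eqref{eq:hdnproperty3} we have $h^\flat\circ N_i=\varepsilon_i\varepsilon_{i-1}d_i$, hence $I_i=\varepsilon_i\varepsilon_{i-1}N_i$. Since $N_i$ is a Nijenhuis operator with $N_i^2=\varepsilon_i\Id_\g$ by Proposition \ref{pro:Nij}, each $I_i$ is a Nijenhuis operator with $I_i^2=\varepsilon_i\Id_\g$.
\begin{itemize}
\item In the hyper case this gives $I_i=N_i$, so all three are complex structures.
\item In the para case it gives $I_1=-N_1$ and $I_2=N_2$, which are para-complex, and $I_3=-N_3$ with $I_3^2=-\Id_\g$.
\end{itemize}
For anticommutation, Proposition \ref{pro:pro2.8}(iii) gives $N_iN_j=\varepsilon_1\varepsilon_2\varepsilon_3N_jN_i=-N_jN_i$, so $I_1I_2=-I_2I_1$. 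The same item gives $N_1N_2=\varepsilon_1\varepsilon_2N_3$. Therefore
\[
I_1I_2=(\varepsilon_1\varepsilon_3)(\varepsilon_2\varepsilon_1)\,\varepsilon_1\varepsilon_2\,N_3=\varepsilon_1\varepsilon_3N_3 .
\]
This equals $I_3=\varepsilon_3\varepsilon_2N_3$ exactly because $\varepsilon_1=\varepsilon_2$, which holds in both cases.

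\emph{Converse.} Suppose $h^\flat$ and $I_1,I_2$ are given with $I_1I_2=-I_2I_1$, $I_3=I_1I_2$, and each $d_i=h^\flat\circ I_i$ a relative differential operator. Each $d_i$ is invertible because $h^\flat$ and $I_i$ are. Then $T_i=I_i^{-1}\circ(h^\flat)^{-1}$, so $N_i=T_{i-1}\circ d_{i+1}=I_{i-1}^{-1}I_{i+1}$, and $h^\flat$ drops out entirely.

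It remains to check $N_i^2=\varepsilon_i\Id_\g$ with the right signs.
\begin{itemize}
\item In the hyper case, $I_1^{-1}=-I_1$, $I_2^{-1}=-I_2$, $I_3^{-1}=I_2I_1$ and $I_3^2=-\Id_\g$. For example, $N_1=I_3^{-1}I_2=I_2I_1I_2=I_1$. Similarly $N_2=I_1^{-1}I_3=-I_1I_1I_2=I_2$ and $N_3=I_2^{-1}I_1=-I_2I_1=I_3$, so all $N_i^2=-\Id_\g$.
\item In the para case, $I_1^{-1}=I_1$, $I_2^{-1}=I_2$, $I_3^2=I_1I_2I_1I_2=-\Id_\g$ and $I_3^{-1}=-I_3=I_2I_1$. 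Then $N_1=I_2I_1I_2=-I_1$ with $N_1^2=\Id_\g$, $N_2=I_1I_1I_2=I_2$ with $N_2^2=\Id_\g$, and $N_3=I_2I_1=-I_3$ with $N_3^2=-\Id_\g$.
\end{itemize}
So $\varepsilon=(1,1,-1)$ in the para case and $(-1,-1,-1)$ in the hyper case. The Nijenhuis property of the $I_i$ is not needed for the converse, since Proposition \ref{pro:Nij} recovers it automatically.

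The main obstacle is only sign bookkeeping. This includes matching the convention $N_i=T_{i-1}d_{i+1}$ with the cyclic indexing and confirming that $I_3=I_1I_2$, rather than $I_2I_1$, is consistent with $\varepsilon_3=-1$ in the para case. I would verify each $N_i$ explicitly, as above, rather than rely on symmetry.
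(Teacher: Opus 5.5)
Your proof is correct and follows essentially the paper's route. It uses the same $h^\flat=\varepsilon_3\varepsilon_2\,d_3\circ T_1\circ d_2$ and identifies the $I_i$ with $\pm N_i$ via Eq.~\eqref{eq:hdnproperty3} and Proposition~\ref{pro:pro2.8}(iii). For the converse, your computation $N_i=I_{i-1}^{-1}I_{i+1}$ is what the paper's step $d_3=d_1\circ I_2\Rightarrow I_2=T_1\circ d_3=N_2$ does one index at a time.

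Your explicit sign tracking in the para case ($I_1=-N_1$, $I_2=N_2$, $I_3=-N_3$) is worth keeping. The paper only writes out the hyper case with $I_i=N_i$ and calls the para case ``similar'', but taking $I_i=N_i$ literally there would give the wrong signs.
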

\begin{proof}
Let	$(d_1,d_2,d_3)$ be a hyper relative differential operator. Letting $h^\flat=d_{3}\circ T_{1}\circ d_{2}$, $I_1=N_1$ and $I_2=N_2$, by  (iii) in Prop.  \ref{pro:pro2.8}, we have $I_1\circ I_2=-I_2\circ I_1$ and $I_3=N_3$.  By Eq.\eqref{eq:hdnproperty3}, we have $d_i=h^\flat\circ N_i=h^\flat\circ I_i$ and thus $h^\flat\circ I_i $ are relative differential operators  for all $i=1,2,3$. 

Conversely, by $I_3=I_1\circ I_2$, we have 
$$d_3=h^\flat\circ I_3=h^\flat\circ I_1\circ I_2=d_1\circ I_2,$$
which implies that $I_2=N_2=T_1\circ d_3$. Similarly, we  have $I_1=N_1$ and $I_2=N_2$. Since $I_1^2=I_2^2=-\Id_\g$, we have $I_3^2=-\Id_\g$.  Hence $(d_1,d_2,d_3)$ is a hyper relative differential operator.

The second claim can be proved similarly.
\end{proof}	

\emptycomment{
{\bf Recall a complex product on a Lie algebra $\g$ is a pair $(E,J)$ such that $E$ is a product structure and $J$ is a complex structure on the Lie algebra $\g$ satisfying $JE=-EJ$. A complex product structure $(E,J)$ on $\g$ gives rise to a double Lie algebra $(\g,\g_+,\g_-)$ such that $E|_{\g_+}=\id_{\g_+},~E|_{\g_-}=-\id_{\g_-}$ and $J\g_+=\g_-$. Conversely, one has
\begin{pro}{\rm(\cite{Andrada})}
A double Lie algebra $(\g,\g_+,\g_-)$ with {\rm dim} $\g_+$={\rm dim} $\g_-$ induces a complex product structure on the Lie algebra $\g$, where the complex product structure $(E,J)$ is defined by
$$E|_{\g_+}=\id_{\g_+},~E|_{\g_-}=-\id_{\g_-},~J(x+a)=-\varphi^{-1}(a)+\varphi(x),\quad x\in\g_+,a\in\g_-,$$
if and only if there exists a linear isomorphism $\varphi:\g_+\rightarrow\g_-$ such that 
\begin{eqnarray}
    \varphi[x,y]+\varphi^{-1}[\varphi(x),\varphi(y)]=[\varphi(x),y]+[x,\varphi(y)],\quad x,y\in\g_+.
\end{eqnarray}
\end{pro}

If $(d_1,d_2,d_3)$ is a para-hyper relative differential operator, then $(E=I_1,J=I_3)$ is a complex structure on $\g$. Then $(\g=\g_+\oplus\g_-,\g_+,\g_-)$ is a double Lie algebra and $V=V_+\oplus V_-$ such that $h^\flat|_{\g_+}=V_+$ and $h^\flat|_{\g_1}=V_1$. Then $d_1$ is a relative differential differential operator if and only if ???, $d_2$ is a relative differential differential operator if and only if ???, and $d_3$ is a relative differential differential operator if and only if ???
}}

\section{$\varepsilon$-hyper symplectic structures and $\varepsilon$-hyper Hessian structures}\label{sec:hyper2}

\subsection{$\varepsilon$-hyper symplectic structures}
Let $\omega$ be a symplectic structure on a Lie algebra $\g$. Recall that $\omega^\natural:\g\rightarrow\g^*$ defined by Eq. \eqref{eq:sym-diff} is a relative differential operator with respect to the coadjoint representation $(\g^*;\ad^*)$.  

\begin{defi}
	Let $\omega_1,\omega_2$ and $\omega_3$ be symplectic structures on the Lie algebra $\g$. If the triple
	$(\omega^\natural_1,\omega^\natural_2,\omega^\natural_3)$ is an  $\varepsilon$-hyper relative differential operator with respect to the coadjoint representation $(\g^*;\ad^*)$, then $(\omega_1,\omega_2,\omega_3)$ is called an  \emph{$\varepsilon$-hyper  symplectic structure}. 
\end{defi}

Recall that a \emph{Hermitian structure} on a Lie algebra $\g$ is a pair $(h,I)$, where $h$ is a pseudo-metric and $I:\g\to \g$ is a complex structure satisfying 
\begin{equation}\label{eq:sym-invariant1}
h(I(x),I(y))=h(x,y),\quad \text{ for all }\, x,y\in \g,
\end{equation}
while a \emph{para-Hermitian structure} on a Lie algebra $\g$ is a pair $(h,I)$, where $h$ is a pseudo-metric and $I:\g\to \g$ is a para-complex structure satisfying 
\begin{equation}\label{eq:sym-invariant2}
h(I(x),I(y))=-h(x,y),\quad \text{ for all }\, x,y\in \g.
\end{equation}

\begin{defi}{\rm (\cite{Hit})}
	Let $(h,I_1)$ and $(h,I_2)$ be (para-)Hermitian structures on a Lie algebra $\g$ and $I_3=I_1I_2$. If $I_1I_2=-I_2I_1$ and $\omega_i$ defined by 
	\begin{equation}\label{eq:her-sym}
		\omega_i(x,y)=h(I_i(x),y),\quad \text{ for all }\, x,y\in \g,
	\end{equation}
	are symplectic structures on $\g$ for  $i=1,2,3$, then $(h,I_1,I_2,I_3)$ is called a \emph{(para-)hyper K\"ahler structure} on the Lie algebra $\g$.
\end{defi}	
\begin{rmk}
The notion of hyper symplectic structure on a Lie algebra introduced in \cite{Andrada05} corresponds to the para-hyper K\"ahler structure appearing here.
\end{rmk}

\begin{pro}\label{pro:hyper-kahler}
	\begin{itemize}
	\item[\rm (i)]The quadruple $(h,I_1,I_2,I_3)$ is a hyper K\"ahler structure on a Lie algebra $\g$ if and only  if $(\omega_1,\omega_2,\omega_3)$ is an  $\varepsilon$-hyper  symplectic structure with $\varepsilon=(-1,-1,-1)$, where $\omega_i$, for  $i=1,2,3$, are given by \textup{Eq}. \eqref{eq:her-sym}.
	\item[\rm (ii)]The quadruple $(h,I_1,I_2,I_3)$ is a para-hyper K\"ahler structure on the Lie algebra $\g$ if and only  if $(\omega_1,\omega_2,\omega_3)$ is an  $\varepsilon$-hyper  symplectic structure with $\varepsilon=(1,1,-1)$, where $\omega_i$, for  $i=1,2,3$, are given by \textup{Eq.}  \eqref{eq:her-sym}.
	\end{itemize}
\end{pro}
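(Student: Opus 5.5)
The plan is to translate everything into statements about the linear maps $\omega_i^\natural,h^\flat:\g\to\g^*$, where $\langle h^\flat(x),y\rangle=h(x,y)$, and then use the definition of an $\varepsilon$-hyper relative differential operator together with Proposition \ref{pro:important property} and Theorem \ref{thm:equivalence}. Throughout, $d_i=\omega_i^\natural$ and $T_i=d_i^{-1}$. The representation is $(\g^*;\ad^*)$, and for each $\omega_i$ being symplectic is the same as $\omega_i^\natural$ being a relative differential operator. So only the algebraic conditions $N_i^2=\varepsilon_i\Id_\g$ and the metric compatibilities need to be checked. Eq. \eqref{eq:her-sym} says exactly that $d_i=h^\flat\circ I_i$.

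\emph{Forward direction.} Assume $(h,I_1,I_2,I_3)$ is a (para-)hyper K\"ahler structure. Then $d_i=h^\flat\circ I_i$ is invertible, so $N_i=T_{i-1}\circ d_{i+1}=I_{i-1}^{-1}\circ I_{i+1}$, because $h^\flat$ cancels. In the hyper case we have $I_1^2=I_2^2=-\Id$, $I_3=I_1I_2$ and $I_1I_2=-I_2I_1$, which give the quaternion relations $I_3^2=-\Id$ and $I_i^{-1}=-I_i$. From these, $N_1=-I_3I_2=I_1$, $N_2=I_2$ and $N_3=I_3$, so $\varepsilon=(-1,-1,-1)$. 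In the para case we have $I_1^2=I_2^2=\Id$, and anticommutation gives $I_3^2=-\Id$. A direct computation then gives $N_1=-I_1$, $N_2=I_2$ and $N_3=-I_3$, whose squares are $\Id,\Id,-\Id$. This is $\varepsilon=(1,1,-1)$.

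\emph{Converse direction.} Assume $(\omega_1,\omega_2,\omega_3)$ is an $\varepsilon$-hyper symplectic structure with $\varepsilon_1\varepsilon_2\varepsilon_3=-1$. Theorem \ref{thm:equivalence} supplies an invertible $h^\flat$ and (para-)complex structures $I_1,I_2$ with $I_1I_2=-I_2I_1$, $I_3=I_1I_2$ and $d_i=h^\flat\circ I_i$.
\begin{itemize}
\item In the hyper case this is $h^\flat=d_3T_1d_2$ with $I_i=N_i$, using Eq. \eqref{eq:hdnproperty3}.
\item In the para case I would take $h^\flat$ as in Eq. \eqref{eq:hyper-metric} and read off $I_i=\pm N_i$ from Eq. \eqref{eq:hdnproperty3}, matching the signs found in the forward direction.
\end{itemize}
Define $h(x,y)=\langle h^\flat(x),y\rangle$. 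Then Eq. \eqref{eq:her-sym} holds by construction, and $h$ is nondegenerate.

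\emph{Symmetry of $h$.} Skew-symmetry of $\omega_i$ means $(d_i)^*=-d_i$, hence $T_i^*=-T_i$. Dualising $h^\flat=\varepsilon_3\varepsilon_2\,d_3T_1d_2$ gives $(h^\flat)^*=-\varepsilon_3\varepsilon_2\,d_2T_1d_3$. By Eq. \eqref{eq:hproperty1} with $i=1$, $d_2T_1d_3=\varepsilon_1h^\flat$. Therefore $(h^\flat)^*=-\varepsilon_1\varepsilon_2\varepsilon_3h^\flat=h^\flat$, so $h$ is symmetric.

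\emph{(Para-)Hermitian condition.} For each $i$,
$$h(I_ix,I_iy)=\omega_i(x,I_iy)=-\omega_i(I_iy,x)=-h(I_i^2y,x)=-\varepsilon\,h(x,y),$$
where $\varepsilon=-1$ for a complex structure and $\varepsilon=+1$ for a para-complex one. This gives Eq. \eqref{eq:sym-invariant1} in the hyper case and Eq. \eqref{eq:sym-invariant2} in the para case.

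The main obstacle I expect is sign bookkeeping in the para case. There the $N_i$ differ from the $I_i$ by signs, and one must check that Eq. \eqref{eq:hproperty1} and Eq. \eqref{eq:hdnproperty3} give a single $h^\flat$ for which $d_i=h^\flat\circ I_i$ holds simultaneously for all three $i$ with $I_3=I_1I_2$. I would first fix $h^\flat$ and then determine each $I_i$ from $d_i$, rather than setting $I_i=N_i$.
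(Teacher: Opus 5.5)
Your proposal is correct and follows essentially the paper's route. The forward direction is the computation behind Theorem~\ref{thm:equivalence}. The converse uses that theorem, proves symmetry of $h$ by dualising $h^\flat$ with $T_i^*=-T_i$, and obtains the Hermitian condition from the same chain $h(I_ix,I_iy)=\omega_i(x,I_iy)=-\omega_i(I_iy,x)$. Your explicit para-case signs $I_1=-N_1$, $I_2=N_2$, $I_3=-N_3$ and the uniform identity $(h^\flat)^*=-\varepsilon_1\varepsilon_2\varepsilon_3h^\flat$ are more careful than the paper's ``similarly''. One small citation fix: $d_2T_1d_3=\varepsilon_1h^\flat$ is not Eq.~\eqref{eq:hproperty1} at $i=1$, which gives $d_3T_1d_2$; it follows from Eq.~\eqref{eq:hdnproperty2} at $i=2$, since $N_2=T_1d_3$.
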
	
\begin{proof}
Let	$(h,I_1,I_2,I_3)$ be a hyper K\"ahler structure on the Lie algebra $\g$. By Theorem \ref{thm:equivalence}, $(\omega^\natural_1,\omega^\natural_2,\omega^\natural_3)$ is an  $\varepsilon$-hyper relative differential operator with respect to the coadjoint representation $(\g^*;\ad^*)$ with $\varepsilon=(-1,-1,-1)$. Furthermore, for $i=1,2$, by Eq. \eqref{eq:sym-invariant1} and $I_i^2=-\id_\g$, we have
\begin{align*}
\omega_i(x,y)=h(I_i(x),y)=-h(x,I_i(y))=-h(I_i(y),x)=-\omega_i(y,x),
\end{align*}
and by $I_1I_2=-I_2I_1$, we also have 
\begin{align*}
\omega_3(x,y)=h(I_3(x),y)=h(I_1I_2(x),y)=h(x,I_2I_1(y))=-h(I_3(y),x)=-\omega_3(y,x),
\end{align*}
which implies that $\omega_1$, $\omega_2$ and $\omega_3$ are skew-symmetric and hence are symplectic structures on the Lie algebra $\g$. Note that, for $i=1,2,3$, the cocycle condition satisfied by $\omega_i$ is equivalent to $d_i$ being a relative differential operator  with respect to the coadjoint representation $(\g^*;\ad^*)$.   Thus,  $(\omega_1,\omega_2,\omega_3)$ is an $\varepsilon$-hyper  symplectic structure with $\varepsilon=(-1,-1,-1)$.  

Conversely, if $(\omega_1,\omega_2,\omega_3)$ is an  $\varepsilon$-hyper  symplectic structure with $\varepsilon=(-1,-1,-1)$, by Theorem \ref{thm:equivalence}, there exist an invertible linear map $h^\flat:\g\to \g^*$, and complex structures $I_i:\g\to\g$  for $i=1,2$ such that $I_1\circ I_2=-I_2\circ I_1,$ and $\omega^\natural_i=h^\flat\circ I_i$. Set $h(x,y)=\langle h^\flat (x),y\rangle$ for $x,y\in \g$. By the fact that  $h^\flat=\omega^\natural_3\circ (\omega^\natural_1)^{-1}\circ \omega^\natural_2=-\omega^\natural_2\circ (\omega^\natural_1)^{-1}\circ \omega^\natural_3$ and $\omega_i$ are skew-symmetric, we have
$$h(x,y)=\langle h^\flat (x),y\rangle=\langle \omega^\natural_3\circ (\omega^\natural_1)^{-1}\circ \omega^\natural_2(x),y\rangle =-\langle x, \omega^\natural_2\circ (\omega^\natural_1)^{-1}\circ \omega^\natural_3(y) \rangle =h(y,x),$$
and 
 for $ i=1,2$, we have
$$h(I_i(x),I_i(y))=\omega_i(x,I_i(y))=-\omega_i(I_i(y),x)=h(x,y).$$
Thus, $(h,I_1)$ and $(h,I_2)$ are  Hermitian structures on the Lie algebra $\g$. 

The second claim can be proved similarly. 
\end{proof}

\begin{defi}
	Let $d_1,d_2$ and $d_3$ be invertible derivations on the Lie algebra $\g$. If the triple
		$(d_1,d_2,d_3)$ is an  $\varepsilon$-hyper relative differential operator with respect to the adjoint representation $(\g;\ad)$, then $(d_1,d_2,d_3)$ is called an  \emph{$\varepsilon$-hyper  differential operator} on the Lie algebra $\g$.
	\end{defi}

Let $\g$ be a Lie algebra with a non-degenerate, $\ad$-invariant, symmetric bilinear form $B\in\g\otimes\g$. Then $B$ induces a bijective linear map $B^\sharp:\g\to\g^*$ given by
\begin{equation}
	\langle B^\sharp(x),y\rangle=B(x,y),\quad \text{ for all }\, x,y\in \g.
\end{equation}
By the $\ad$-invariance of $B$, we have
\begin{equation}\label{eq:adinv}
	\ad^*_x B^\sharp (y)=B^\sharp(\ad_xy).
\end{equation}
A \emph{skew-symmetric endomorphism of $(\g,B)$} is a linear map $\varphi:\g\to \g$ such that $ B^\sharp\circ \varphi:\g^*\longrightarrow\g$ is skew-symmetric, that is, 
$$\langle B^\sharp\circ \varphi(x),y\rangle =-\langle B^\sharp\circ \varphi(y),x\rangle ,\quad \text{ for all } \, x,y\in\g.$$
\begin{thm}\label{thm:rmatrix-RBN}
	Let $d_i$ be skew-symmetric endomorphisms of $(\g,B)$ and set $\omega_i(x,y)=B(d_i(x),y)$ for all $i=1,2,3$.  Then $(d_1,d_2,d_3)$ is an  $\varepsilon$-hyper  differential operator if and only if $(\omega_1,\omega_2,\omega_3)$ is  an  $\varepsilon$-hyper  symplectic structure. 
\end{thm}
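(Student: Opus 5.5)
The plan is to observe that $\omega_i^\natural=B^\sharp\circ d_i$ and that $B^\sharp:\g\to\g^*$ is an isomorphism intertwining $\ad$ and $\ad^*$ by Eq. \eqref{eq:adinv}. Both conditions in the statement then reduce to the same identities, transported along $B^\sharp$.

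\emph{Identification of the maps.} For $x,y\in\g$ we have $\langle\omega_i^\natural(x),y\rangle=\omega_i(x,y)=B(d_i(x),y)=\langle B^\sharp(d_i x),y\rangle$, so $\omega_i^\natural=B^\sharp\circ d_i$. The skew-symmetry hypothesis on $d_i$ says exactly that each $\omega_i$ is a $2$-form. Since $B^\sharp$ is bijective, $\omega_i$ is nondegenerate if and only if $d_i$ is invertible.

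\emph{Transfer of the cocycle condition.} Apply $B^\sharp$ to $d_i[x,y]-[x,d_iy]+[y,d_ix]$. Using Eq. \eqref{eq:adinv}, this becomes
\[
\omega_i^\natural[x,y]-\ad^*_x\omega_i^\natural(y)+\ad^*_y\omega_i^\natural(x).
\]
Because $B^\sharp$ is injective, $d_i$ is a derivation (a relative differential operator for $(\g;\ad)$) if and only if $\omega_i^\natural$ is a relative differential operator for $(\g^*;\ad^*)$. By the earlier Example on symplectic structures, the latter holds if and only if $\omega_i$ is a symplectic structure, given that $\omega_i$ is a nondegenerate $2$-form.

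\emph{Transfer of the $N_i$.} Put $T_i=d_i^{-1}$ and $\tilde T_i=(\omega_i^\natural)^{-1}=T_i\circ(B^\sharp)^{-1}$. Then
\[
\tilde N_i:=\tilde T_{i-1}\circ\omega^\natural_{i+1}=T_{i-1}\circ(B^\sharp)^{-1}\circ B^\sharp\circ d_{i+1}=N_i.
\]
So the conditions $N_i^2=\varepsilon_i\Id_\g$ are literally the same for both triples. Combining the three steps gives both directions of the equivalence.

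The only potential obstacle is a sign or ordering convention: whether $\ad^*$ is defined as $\langle\ad^*_x\xi,y\rangle=-\langle\xi,[x,y]\rangle$, so that Eq. \eqref{eq:adinv} holds with exactly the stated sign. I would verify this first using the invariance $B([x,y],z)=-B(y,[x,z])$. Everything else is formal.
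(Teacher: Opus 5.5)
Your proposal is correct and follows essentially the same route as the paper. It identifies $\omega_i^\natural=B^\sharp\circ d_i$, uses the $\ad$-invariance of $B$ (Eq.~\eqref{eq:adinv}) to match the derivation identity for $d_i$ with the cocycle identity for $\omega_i$, and notes that $(\omega_{i-1}^\natural)^{-1}\circ\omega_{i+1}^\natural=d_{i-1}^{-1}\circ d_{i+1}$. The one difference is presentation: you transport the identity along the injective intertwiner $B^\sharp$, which gives both directions at once, whereas the paper pairs with $z$ in one direction and leaves the converse as ``similar.''
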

\begin{proof}
Let $(d_1,d_2,d_3)$ be an  $\varepsilon$-hyper  differential operator. Since $d_i$, for all $i=1,2,3$, are skew-symmetric endomorphisms and invertible, then $\omega_i$, for all $i=1,2,3$, are skew-symmetric and nondegenerate. Since $d_i$ are derivations on $\g$, by $\ad$-invariance of $B$,  we have
\begin{eqnarray*}
&&	\omega_i([x,y],z)+	\omega_i([z,x],y)+	\omega_i([y,z],x)\\
&=&B(d_i[x,y],z)-B(d_i(y),[z,x])-B(d_i(x),[y,z])\\
&=&B(d_i[x,y],z)-\langle \ad_x^* B^\sharp(d_i(y)),z\rangle +\langle \ad_y^* B^\sharp(d_i(x)),z\rangle \\
&=&B(d_i[x,y]-[d_i(x),y]-[x,d_i(y)],z)=0,
\end{eqnarray*}
which implies that $\omega_i$ are symplectic structures on the Lie algebra $\g$.  Also, we have 
$N_i=(\omega_{i-1}^\natural)^{-1}\circ \omega_{i+1}^\natural=d_{i-1}^{-1}\circ d_{i+1}.$
Thus $(\omega_1,\omega_2,\omega_3)$ is  an  $\varepsilon$-hyper  symplectic structure. 

The converse can be proved similarly. 
\end{proof}

By Theorem \ref{thm:rmatrix-RBN} and Proposition \ref{pro:hyper-kahler}, we have 
\begin{cor}
		Let $d_i$ be skew-symmetric endomorphisms of $(\g,B)$ for all $i=1,2,3$. Then 
			\begin{itemize}
			\item[\rm (i)]$(d_1,d_2,d_3)$ is an  $\varepsilon$-hyper  differential operator with $\varepsilon=(-1,-1,-1)$ if and only if the quadruple $(h,I_1,I_2,I_3)$ is a hyper K\"ahler structure on the Lie algebra $\g$, where $h^\flat=d_3\circ d_1^{-1}\circ d_2$ and $I_i=(d_{i-1})^{-1}\circ d_{i+1}$ for $i\in \mathbb{Z}_3$.
			\item[\rm (ii)]$(d_1,d_2,d_3)$ is an  $\varepsilon$-hyper  differential operator with  $\varepsilon=(1,1,-1)$ if and only if the quadruple $(h,I_1,I_2,I_3)$ is a para-hyper K\"ahler structure on the Lie algebra $\g$, where $h^\flat=-d_3\circ d_1^{-1}\circ d_2$ and $I_i=(d_{i-1})^{-1}\circ d_{i+1}$ for $i\in \mathbb{Z}_3$.
		\end{itemize}
\end{cor}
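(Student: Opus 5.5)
The plan is to derive the Corollary directly from Theorem \ref{thm:rmatrix-RBN} and Proposition \ref{pro:hyper-kahler}. The only real work is to match the data: the $h$ and $I_i$ in the statement must coincide with the ones produced in those results.

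\textbf{Step 1: reduce to a hyper symplectic structure.} Put $\omega_i(x,y)=B(d_i(x),y)$, so that $\omega_i^\natural=B^\sharp\circ d_i$. By Theorem \ref{thm:rmatrix-RBN}, $(d_1,d_2,d_3)$ is an $\varepsilon$-hyper differential operator if and only if $(\omega_1,\omega_2,\omega_3)$ is an $\varepsilon$-hyper symplectic structure, with the same $\varepsilon$.

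\textbf{Step 2: identify the operators.} Since $(\omega_{i-1}^\natural)^{-1}\circ\omega_{i+1}^\natural=d_{i-1}^{-1}\circ (B^\sharp)^{-1}\circ B^\sharp\circ d_{i+1}=d_{i-1}^{-1}\circ d_{i+1}$, the Nijenhuis operators $N_i$ attached to the $\omega$-triple are exactly the $I_i$ of the statement. Likewise
\[
\omega_3^\natural\circ(\omega_1^\natural)^{-1}\circ\omega_2^\natural=B^\sharp\circ d_3\circ d_1^{-1}\circ d_2 .
\]
So the map $h^\flat$ from Eq. \eqref{eq:hyper-metric}, with prefactor $\varepsilon_3\varepsilon_2$, is $B^\sharp\circ d_3\circ d_1^{-1}\circ d_2$ in case (i), because $\varepsilon_2\varepsilon_3=1$. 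In case (ii), with $\varepsilon=(1,1,-1)$, the prefactor is $\varepsilon_3\varepsilon_2=-1$, giving the sign in the statement.

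Here $h^\flat$ is read as a map $\g\to\g$ via $B$, i.e.\ $h(x,y)=B(h^\flat x,y)$. Then $h(x,y)=\langle B^\sharp h^\flat x,y\rangle$, consistent with the proof of Proposition \ref{pro:hyper-kahler}, where $h^\flat:\g\to\g^*$ was $\omega_3^\natural\circ(\omega_1^\natural)^{-1}\circ\omega_2^\natural$ with the $\varepsilon$-prefactor.

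\textbf{Step 3: conclude.} By Proposition \ref{pro:hyper-kahler}(i) and (ii), $(\omega_1,\omega_2,\omega_3)$ is a $(-1,-1,-1)$- (resp.\ $(1,1,-1)$-) hyper symplectic structure if and only if $(h,I_1,I_2,I_3)$ is a (para-)hyper K\"ahler structure. Composing the two equivalences proves both parts.

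\textbf{Main obstacle.} The delicate point is the converse direction of Proposition \ref{pro:hyper-kahler}. There the quadruple is given and the $\omega_i$ are defined by $\omega_i(x,y)=h(I_i x,y)$. One must check these agree with $B(d_i x,y)$, i.e.\ $h^\flat\circ I_i=B^\sharp\circ d_i$ once $h$ and $I_i$ are defined from the $d_i$. This follows from Eq. \eqref{eq:hdnproperty3}, $h^\flat\circ N_i=\varepsilon_i\varepsilon_{i-1}d_i$, which applies to the $\omega$-triple and gives $\omega_i^\natural=\pm h^\flat\circ I_i$.

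The signs $\varepsilon_i\varepsilon_{i-1}$ are all $+1$ in case (i). In case (ii) they must be tracked index by index, and I would verify them by hand. If a stray sign appears, it can be absorbed by replacing $I_i$ with $-I_i$, which preserves the (para-)complex and (para-)Hermitian conditions. The triple-product relation $I_3=I_1I_2$ then comes from Proposition \ref{pro:pro2.8}(iii).
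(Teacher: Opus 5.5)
Your proposal is correct and is the same argument as the paper's, which simply chains Theorem~\ref{thm:rmatrix-RBN} with Proposition~\ref{pro:hyper-kahler}; you also make explicit three identifications the paper leaves implicit: $N_i$ with $I_i$, the prefactor $\varepsilon_3\varepsilon_2$ with the sign of $h^\flat$, and $h(I_ix,y)$ with $B(d_ix,y)$. Two refinements: in case (ii) the stray signs do occur, namely $h(I_1x,y)=-B(d_1x,y)$ and $h(I_3x,y)=-B(d_3x,y)$, and they are best dismissed by noting that $\omega$ is symplectic if and only if $-\omega$ is, so there is no need to change the $I_i$ fixed in the statement; and in the converse direction Eq.~\eqref{eq:hdnproperty3} should be used only as an algebraic identity needing $N_i^2=\varepsilon_i$, which you obtain from $I_1^2=\varepsilon_1\Id_\g$, $I_2^2=\varepsilon_2\Id_\g$, $I_1I_2=-I_2I_1$ and $I_3=I_1I_2$ rather than from the $\omega$-triple already being hyper.
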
	
\begin{proof}
(i) By Theorem \ref{thm:rmatrix-RBN}, $(d_1,d_2,d_3)$ is an  $\varepsilon$-hyper  differential operator with $\varepsilon=(-1,-1,-1)$ if and only if $(\omega_1,\omega_2,\omega_3)$ is  an  $\varepsilon$-hyper  symplectic structure with $\omega_i(x,y)=B(d_i(x),y)$ for all $i=1,2,3$. Furthermore, by Proposition \ref{pro:hyper-kahler}, $(\omega_1,\omega_2,\omega_3)$ is  an  $\varepsilon$-hyper  symplectic structure with $\varepsilon=(-1,-1,-1)$ and  $\omega_i(x,y)=B(d_i(x),y)$ for all $i=1,2,3$ if and only if the quadruple $(h,I_1,I_2,I_3)$ is a hyper K\"ahler structure on a Lie algebra $\g$, where $h(x,y)=d_3\circ d_1^{-1}\circ d_2$ and $I_i=(d_{i-1})^{-1}\circ d_{i+1}$ for $i\in \mathbb{Z}_3$.

Part (ii) can be proved similarly. 
\end{proof}

\subsection{$\varepsilon$-hyper Hessian structures}
Let $\mathscr{B}$ be a Hessian structure on the pre-Lie algebra $\g$. Recall that $\mathscr{B}^\natural:\g\rightarrow\g^*$ defined by \eqref{eq:hess-diff} is a relative differential operator with respect to the coregular representation $(\g^*,L^*)$.  
\begin{defi}
	Let $\mathscr{B}_1,\mathscr{B}_2$ and $\mathscr{B}_3$ be Hessian structures on the pre-Lie algebra $\g$. If the triple
	$(\mathscr{ B}^\natural_1,\mathscr{B}^\natural_2,\mathscr{ B}^\natural_3)$ is an  $\varepsilon$-hyper relative differential operator on the Lie algebra $\g^c$ with respect to the coregular representation $(\g^*;L^*)$, then $(\mathscr{B}_1,\mathscr{B}_2,\mathscr{B}_3)$ is called an  \emph{ $\varepsilon$-hyper  Hessian structure}. 
\end{defi}
\begin{ex}{\rm Consider the 4-dimensional pre-Lie algebra given in the basis $\{e_1, e_2, e_3, e_4\}$ by
\[
e_3 \cdot e_1= e_2, \quad e_3 \cdot e_2=- e_1.
\]
Let us define:
\[
\begin{array}{lcl}
\mathscr{B}_1 & = & e_1^* \otimes e_1^*+e_2^* \otimes e_2^* + 2 e_3^*\otimes e_3^*+e_3^*\otimes e_4^*+e_4^*\otimes e_3^*,\\
\mathscr{B}_2 & = & -e_1^* \otimes e_1^*-e_2^* \otimes e_2^* + 2 e_3^*\otimes e_3^*+e_4^*\otimes e_4^*+e_3^*\otimes e_4^*+e_4^*\otimes e_3^*,\\
\mathscr{B}_3 & = & i \, e_1^* \otimes e_1^*+i\, e_2^* \otimes e_2^* + 2 i\, e_3^*\otimes e_3^*+i\,  e_4^*\otimes e_4^*+i\, e_3^*\otimes e_4^*+i\, e_4^*\otimes e_3^*.\\
\end{array}
\]
A direct computation shows that $(\mathscr{B}_1,\mathscr{B}_2,\mathscr{B}_3)$ is an $\varepsilon$-hyper  Hessian structure for $\varepsilon=(-1,-1,1).$}
\end{ex}

\begin{defi} 
	An \emph{anti-Hermitian structure} on a Lie algebra $\g$ is a pair $(\omega,I)$, where $\omega$ is a non-degenerate skew-symmetric bilinear and $I:\g\to \g$ is a complex structure satisfying 
\begin{equation}\label{eq:skew-invariant1}
\omega(I(x),I(y))=\omega(x,y),\quad \text{ for all }\, x,y\in \g,
\end{equation}
while a \emph{para-anti-Hermitian structure} on a Lie algebra $\g$ is a pair $(\omega,I)$, where $\omega$ is a non-degenerate skew-symmetric bilinear and $I:\g\to \g$ is a para-complex structure satisfying 
\begin{equation}\label{eq:skew-invariant2}
\omega(I(x),I(y))=-\omega(x,y),\quad \text{ for all } \, x,y\in \g.
\end{equation}
\end{defi}
\begin{ex}{\rm Consider the 4-dimensional symplectic Lie algebra $L$ given in the basis $\{e_1, e_2, e_3, e_4\}$ by 
\[
[e_1, e_2]=e_3, \quad \omega=e_2^*\wedge e_3^*- e_1^*\wedge e_4^*.
\]
There exists a complex structure $I$ on $L$ given by 
\[
I(e_1)=-e_2, \quad I(e_2)=e_1, \quad I(e_3)=-e_4, \quad I(e_4)=e_3.
\]
Moreover, a direct computation shows that Eq. \eqref{eq:skew-invariant1} hold.
}
\end{ex}
\begin{defi}
Let $\g$ be a pre-Lie algebra.  Let $(\omega,I_1)$ and $(\omega,I_2)$ be (para-)anti-Hermitian structures on the sub-adjacent Lie algebra $\g^c$ and $I_3=I_1I_2$. If $I_1I_2=-I_2I_1$ and $\mathscr{B}_i\in \wedge^2\g$ defined by 
	\begin{equation}\label{eq:her-Hes}
		\mathscr{B}_i(x,y)=\omega(I_i(x),y),\quad \text{ for all } \, x,y\in \g,
	\end{equation}
	are Hessian structures on the pre-Lie algebra $\g$ for  $i=1,2,3$, then $(\omega,I_1,I_2,I_3)$ is called a \emph{ (para-)hyper anti-K\"ahler structure} on the pre-Lie algebra $\g$.
\end{defi}	
\begin{pro}\label{pro:hyper-hes}
	\begin{itemize}
	\item[\rm (i)]The quadruple $(\omega,I_1,I_2,I_3)$ is a hyper anti-K\"ahler structure on the pre-Lie algebra $\g$ if and only  if $(\mathscr{B}_1,\mathscr{B}_2,\mathscr{B}_3)$ is an  $\varepsilon$-hyper  Hessian structure with $\varepsilon=(-1,-1,-1)$, where $\mathscr{B}_i$ are given by \eqref{eq:her-Hes} for  $i=1,2,3$.
	\item[\rm (ii)]The quadruple $(\omega,I_1,I_2,I_3)$ is a para-hyper anti-K\"ahler structure on the pre-Lie algebra $\g$ if and only  if $(\mathscr{B}_1,\mathscr{B}_2,\mathscr{B}_3)$ is an  $\varepsilon$-hyper  Hessian structure with $\varepsilon=(1,1,-1)$, where $\mathscr{B}_i$ are given by \eqref{eq:her-Hes} for  $i=1,2,3$.
	\end{itemize}
\end{pro}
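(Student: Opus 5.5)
The plan follows the proof of Proposition \ref{pro:hyper-kahler}, with the roles of symmetric and skew-symmetric forms exchanged. The symmetric pseudo-metric $h$ becomes the nondegenerate skew form $\omega$, and the skew symplectic forms $\omega_i$ become the symmetric Hessian forms $\mathscr{B}_i$. The Lie algebra is now $\g^c$ and the representation is the coregular one $(\g^*;L^*)$. Throughout I write $\omega^\flat:\g\to\g^*$ for $\langle\omega^\flat(x),y\rangle=\omega(x,y)$. Then $\mathscr{B}_i(x,y)=\omega(I_i x,y)$ reads $\mathscr{B}_i^\natural=\omega^\flat\circ I_i$.

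\textbf{Forward direction for (i).} Let $(\omega,I_1,I_2,I_3)$ be a hyper anti-K\"ahler structure. First I check that each $\mathscr{B}_i$ is symmetric. For $i=1,2$, use \eqref{eq:skew-invariant1} together with $I_i^2=-\Id$:
$$\omega(I_ix,y)=\omega(I_i^2x,I_iy)=-\omega(x,I_iy)=\omega(I_iy,x).$$
For $i=3$, use $I_1I_2=-I_2I_1$ and the invariance of $\omega$ under $I_1$ and $I_2$:
$$\omega(I_1I_2x,y)=-\omega(I_2x,I_1y)=\omega(x,I_2I_1y)=-\omega(x,I_1I_2y)=\omega(I_3y,x).$$
Each $\mathscr{B}_i$ is nondegenerate because $I_i$ is invertible. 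By hypothesis the $\mathscr{B}_i$ are Hessian structures, so the proposition on Hessian structures makes each $\mathscr{B}_i^\natural$ a relative differential operator on $\g^c$ with respect to $(\g^*;L^*)$. Now apply the converse part of Theorem \ref{thm:equivalence} with $h^\flat=\omega^\flat$ and $d_i=\omega^\flat\circ I_i$. This gives $N_i=I_i$, so $N_i^2=-\Id$, and therefore $\varepsilon=(-1,-1,-1)$.

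\textbf{Converse for (i).} Suppose $(\mathscr{B}_1,\mathscr{B}_2,\mathscr{B}_3)$ is a $(-1,-1,-1)$-hyper Hessian structure. By Theorem \ref{thm:equivalence} and Proposition \ref{pro:important property} there are $h^\flat=\mathscr{B}_3^\natural(\mathscr{B}_1^\natural)^{-1}\mathscr{B}_2^\natural$ and $I_i=N_i$ with $\mathscr{B}_i^\natural=h^\flat\circ I_i$. Set $\omega(x,y)=\langle h^\flat x,y\rangle$.

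The step I expect to be the main obstacle is showing that $\omega$ is skew. By \eqref{eq:hproperty1}, $h^\flat=\varepsilon_1\varepsilon_2\,\mathscr{B}_2^\natural(\mathscr{B}_3^\natural)^{-1}\mathscr{B}_1^\natural$ as well. The transpose of a composite reverses the order of its factors, and each $\mathscr{B}_i^\natural$ is self-transpose because $\mathscr{B}_i$ is symmetric. Hence
$$(h^\flat)^*=\mathscr{B}_2^\natural(\mathscr{B}_1^\natural)^{-1}\mathscr{B}_3^\natural.$$
The relations \eqref{eq:hproperty1} and \eqref{eq:dtproperty1} give $\mathscr{B}_2^\natural(\mathscr{B}_1^\natural)^{-1}\mathscr{B}_3^\natural=\varepsilon_1 h^\flat=-h^\flat$. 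In the skew-symmetric case of Proposition \ref{pro:hyper-kahler} the analogous identity produced a symmetric $h$; here the symmetry of the $\mathscr{B}_i$ flips the sign and makes $\omega$ skew. The indices and signs in this step must be tracked carefully.

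With $\omega$ skew, invariance follows from the symmetry of $\mathscr{B}_i$ and $I_i^2=-\Id$:
$$\omega(I_ix,I_iy)=\mathscr{B}_i(x,I_iy)=\mathscr{B}_i(I_iy,x)=\omega(I_i^2y,x)=-\omega(y,x)=\omega(x,y).$$
Together with $I_1I_2=-I_2I_1$ and $I_3=I_1I_2$ from Theorem \ref{thm:equivalence}, this shows $(\omega,I_1,I_2,I_3)$ is a hyper anti-K\"ahler structure. The $\mathscr{B}_i$ are Hessian structures by hypothesis.

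\textbf{Part (ii).} The argument is identical, using the para version of Theorem \ref{thm:equivalence} and $\varepsilon=(1,1,-1)$. Now $I_1^2=I_2^2=\Id$, and \eqref{eq:skew-invariant2} carries a minus sign. These two sign changes cancel in the symmetry check for $\mathscr{B}_1$ and $\mathscr{B}_2$. For $\mathscr{B}_3$ and for the skewness of $\omega$, one inserts $\varepsilon_1=1$, $\varepsilon_3=-1$ into the same identities and verifies the signs directly.
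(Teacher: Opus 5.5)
Your proposal is correct and takes essentially the same route as the paper. Both directions go through Theorem~\ref{thm:equivalence} with $h^\flat=\omega^\natural$; symmetry of the $\mathscr{B}_i$ comes from the $\omega$-skewness of $I_1,I_2$ together with $I_1I_2=-I_2I_1$; skewness of $\omega$ comes from comparing the transpose of $h^\flat=\mathscr{B}_3^\natural(\mathscr{B}_1^\natural)^{-1}\mathscr{B}_2^\natural$ with $h^\flat=-\mathscr{B}_2^\natural(\mathscr{B}_1^\natural)^{-1}\mathscr{B}_3^\natural$; and invariance follows from the same computation the paper uses.

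One small tidy-up: the transpose you write down is that of the first expression for $h^\flat$, not of the form $\varepsilon_1\varepsilon_2\,\mathscr{B}_2^\natural(\mathscr{B}_3^\natural)^{-1}\mathscr{B}_1^\natural$ that you introduce just before it. The identity you reach is nonetheless correct, and in general it reads $(h^\flat)^*=\varepsilon_1\varepsilon_2\varepsilon_3\,h^\flat=-h^\flat$, which handles part (ii) at once.
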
	
\begin{proof}
Let	$(\omega,I_1,I_2,I_3)$ be a hyper Hessian structure on the pre-Lie algebra $\g$. By Theorem \ref{thm:equivalence}, $(\mathscr{ B}^\natural_1,\mathscr{ B}^\natural_2,\mathscr{ B}^\natural_3)$ is an  $\varepsilon$-hyper relative differential operator with respect to the coregular representation $(\g^*;L^*)$ with $\varepsilon=(-1,-1,-1)$, where $h^\flat=\omega^\natural$. Furthermore, for $i=1,2$, by Eq. \eqref{eq:skew-invariant1} and $I_i^2=-\id_\g$, we have
\begin{align*}
\mathscr{ B}_i(x,y)=\omega(I_i(x),y)=-\omega(x,I_i(y))=\omega(I_i(y),x)=\mathscr{B}_i(y,x),
\end{align*}
and by $I_1I_2=-I_2I_1$, we also have
\begin{align*}
\mathscr{ B}_3(x,y)=\omega(I_3(x),y)=\omega(I_1I_2(x),y)=\omega(x,I_2I_1(y))=\omega(I_3(y),x)=\mathscr{B}_3(y,x),
\end{align*}
which implies that $\mathscr{B}_1$, $\mathscr{B}_2$ and $\mathscr{B}_3$ are symmetric  and hence are Hessian structures on the pre-Lie algebra $\g$. Thus, $(\mathscr{B}_1,\mathscr{B}_2,\mathscr{B}_3)$ is an  $\varepsilon$-hyper Hessian structure with $\varepsilon=(-1,-1,-1)$.  

Conversely, if $(\mathscr{ B}_1,\mathscr{ B}_2,\mathscr{B}_3)$ is an  $\varepsilon$-hyper  Hessian structure with $\varepsilon=(-1,-1,-1)$, by Theorem \ref{thm:equivalence}, there exist an invertible linear map $h^\flat:\g\to V$, and complex structures $I_i:\g\to\g$  for $i=1,2$ such that $I_1\circ I_2=-I_2\circ I_1,$ and $\mathscr{ B}^\natural_i=h^\flat\circ I_i$. Set $\omega(x,y)=\langle h^\flat (x),y\rangle$ for $x,y\in \g$. Note that $h^\flat=\mathscr{ B}^\natural_3\circ (\mathscr{ B}^\natural_1)^{-1}\circ \mathscr{B}^\natural_2=-\mathscr{ B}^\natural_2\circ (\mathscr{ B}^\natural_1)^{-1}\circ \mathscr{B}^\natural_3$. By the symmetry of $\mathscr{B}_i$, we have
$$\omega(x,y)=\langle h^\flat (x),y\rangle=\langle \mathscr{ B}^\natural_3\circ (\mathscr{ B}^\natural_1)^{-1}\circ \mathscr{ B}^\natural_2(x),y\rangle =\langle x, \mathscr{ B}^\natural_2\circ (\mathscr{ B}^\natural_1)^{-1}\circ \mathscr{B}^\natural_1(y) \rangle =-\omega(y,x),$$
and 
 for $ i=1,2$, we have 
$$\omega(I_i(x),I_i(y))=\mathscr{ B}_i(x,I_i(y))=\mathscr{ B}_i(I_i(y),x)=-\omega(y,x)=\omega(x,y).$$
Thus $(\omega,I_1)$ and $(\omega,I_2)$ are  anti-Hermitian structures on the Lie algebra $\g^c$. 

The second claim can be proved similarly. 
\end{proof}	

Recall that a \emph{ derivation} on a pre-Lie algebra $\g$ is a linear map $d:\g\rightarrow \g$ satisfying $d(x\cdot y)=d(x)\cdot y+x\cdot d(y)$ for all $x,y\in \g$. 

\begin{defi}
Let $d_1,d_2$ and $d_3$ be invertible derivations on the pre-Lie algebra $\g$. If the triple
		$(d_1,d_2,d_3)$ is an  $\varepsilon$-hyper relative differential operator with respect to the regular representation $(\g;L)$, then $(d_1,d_2,d_3)$ is called an  \emph{ $\varepsilon$-hyper  differential operator} on the pre-Lie algebra $\g$.
	\end{defi}

A nondegenerate skew-symmetric bilinear form $\omega\in \g^*\otimes \g^*$ on a pre-Lie algebra $\g$ is called \emph{ invariant} if 
\begin{equation}
\omega(x\cdot y,z)+\omega(y,[x,z])=0,\quad \text{ for all } \, x,y,z\in \g.
\end{equation}

Let $\g$ be a pre-Lie algebra with a non-degenerate, invariant, skew-symmetric bilinear form $\omega\in \g^*\otimes \g^*$. Then $\omega$ induces a bijective linear map $\omega^\natural:\g\to\g^*$ given by $\langle \omega^\natural(x),y\rangle=\omega(x,y)$ for $x,y\in \g.$

By the invariance of $\omega$, we have
\begin{equation}\label{eq:Linv}
	\ad^*_x \omega^\natural (y)=\omega^\natural(L_xy),~\mbox{or equivalently,}~ L^*_x \omega^\natural (y)=\omega^\natural(\ad_xy).
\end{equation}
A \emph{symmetric endomorphism of $(\g,\omega)$} is a linear map $\omega:\g\to \g$ such that $ \omega^\natural\circ \varphi:\g^*\rightarrow\g$ is symmetric, that is, 
$$\langle \omega^\natural \varphi(x),y\rangle =\langle \omega^\natural \varphi(y),x\rangle ,\quad \text{ for all } \, x,y\in\g.$$

\begin{thm}\label{thm:sym}
Let $d_i$ be symmetric endomorphisms of $(\g,\omega)$ and set $\mathscr{ B}_i(x,y)=\omega(d_i(x),y)$ for all $i=1,2,3$.  Then,  $(d_1,d_2,d_3)$ is an  $\varepsilon$-hyper  differential operator on the pre-Lie algebra $\g$ if and only if $(\mathscr{ B}_1,\mathscr{ B}_2,\mathscr{B}_3)$ is  an  $\varepsilon$-hyper Hessian structure. 
\end{thm}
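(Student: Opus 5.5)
The plan mirrors the proof of Theorem \ref{thm:rmatrix-RBN}, with the Lie algebra $(\g,\ad)$ and the form $B$ replaced by the pre-Lie algebra $(\g,L)$ and the invariant form $\omega$. The central observation is that $\mathscr{B}_i^\natural=\omega^\natural\circ d_i$ up to a sign fixed by skew-symmetry. Indeed,
\[
\langle \mathscr{B}_i^\natural(x),y\rangle=\omega(d_i(x),y)=\langle\omega^\natural d_i(x),y\rangle ,
\]
so $\mathscr{B}_i^\natural=\omega^\natural\circ d_i$ exactly. The hypothesis that $d_i$ is a symmetric endomorphism of $(\g,\omega)$ says precisely that each $\mathscr{B}_i$ is a symmetric bilinear form. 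Since $\omega^\natural$ is bijective, $\mathscr{B}_i$ is nondegenerate if and only if $d_i$ is invertible.

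\textbf{Step 1: the hyper relations transfer.} Writing $T_i=d_i^{-1}$, we get
\[
(\mathscr{B}_{i-1}^\natural)^{-1}\circ\mathscr{B}_{i+1}^\natural=d_{i-1}^{-1}(\omega^\natural)^{-1}\omega^\natural d_{i+1}=d_{i-1}^{-1}\circ d_{i+1}.
\]
So the operators $N_i$ attached to the two triples coincide, and the conditions $N_i^2=\varepsilon_i\Id_\g$ are literally the same on both sides. It therefore only remains to match the operator conditions termwise: $d_i$ should be a relative differential operator for $(\g;L)$ exactly when $\mathscr{B}_i^\natural$ is one on $\g^c$ for $(\g^*;L^*)$, i.e.\ exactly when $\mathscr{B}_i$ is Hessian.

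\textbf{Step 2: transport along $\omega^\natural$.} I will use Eq.~\eqref{eq:Linv} in the form $L^*_x\omega^\natural(y)=\omega^\natural(\ad_x y)$. Then
\[
\mathscr{B}_i^\natural[x,y]-L^*_x\mathscr{B}_i^\natural(y)+L^*_y\mathscr{B}_i^\natural(x)
=\omega^\natural\big(d_i[x,y]-[x,d_i(y)]+[y,d_i(x)]\big).
\]
By injectivity of $\omega^\natural$, $\mathscr{B}_i^\natural$ is a relative differential operator for $(\g^*;L^*)$ if and only if $d_i$ is a derivation of the sub-adjacent Lie algebra $\g^c$, i.e.\ a relative differential operator for $(\g^c;\ad)$. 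The proof is then completed by one of two routes:
\begin{itemize}
\item[(a)] The intended representation in the definition of a hyper differential operator is effectively $\ad$ on $\g^c$, as in the proof of Theorem \ref{thm:rmatrix-RBN}. Then Step 2 finishes the argument directly.
\item[(b)] The representation really is $L$, i.e.\ the condition is $d[x,y]=x\cdot d(y)-y\cdot d(x)$. Then I would use the other half of \eqref{eq:Linv}, $\ad^*_x\omega^\natural(y)=\omega^\natural(L_xy)$, together with skew-symmetry of $\omega$. This lets me rewrite
\[
\langle\mathscr{B}_i^\natural[x,y]-L^*_x\mathscr{B}_i^\natural y+L^*_y\mathscr{B}_i^\natural x,\,z\rangle
\]
as $-\omega\big(z,\,d_i[x,y]-x\cdot d_i y+y\cdot d_i x\big)$, up to terms that cancel by the symmetry of $\mathscr{B}_i$. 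Concretely, the step is to move $y\cdot z$ and $x\cdot z$ onto $\mathscr{B}_i$ and then use symmetry to swap the arguments.
\end{itemize}
The definition also asks that the $d_i$ be derivations of the pre-Lie product. I would either show that this is automatic for $\omega$-symmetric relative differential operators, via the same invariance identity, or treat it as part of the standing hypotheses on both sides.

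\textbf{Main obstacle.} The main difficulty is getting the sign and slot bookkeeping in Step 2 to land on exactly the representation used in the definition of an $\varepsilon$-hyper differential operator on the pre-Lie algebra. Everything else is formal. I would first carry out the computation with a test vector $z$, using invariance in the form $\omega(x\cdot y,z)=-\omega(y,[x,z])$ together with symmetry of $\mathscr{B}_i$. The converse direction then follows by reading the same equivalences backwards, exactly as in Theorem \ref{thm:rmatrix-RBN}.
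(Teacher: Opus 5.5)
Your Step 1 and the computation in Step 2 are exactly the paper's. Your route (a) is also the reading the paper's proof actually uses: in the forward direction, the only property of the $d_i$ needed beyond symmetry and invertibility is that a derivation of the pre-Lie product is a derivation of $\g^c$.

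The gap is in the converse, which does not follow by reading Step 2 backwards. Step 2 only yields that each $d_i$ is a derivation of the Lie algebra $\g^c$. The definition requires $d_i(x\cdot y)=d_i(x)\cdot y+x\cdot d_i(y)$, which is stronger in general, and this, not the sign bookkeeping, is the one substantive step of the proof. Your fallback of treating it as a standing hypothesis on both sides is not available, since the Hessian side assumes nothing of the kind. Your other option is correct but must be carried out, and it is where the $\omega$-symmetry of $d_i$ is used essentially. Apply $\omega(d_ia,b)=\omega(d_ib,a)$ twice, invariance $\omega(a\cdot b,c)=-\omega(b,[a,c])$ three times, and skew-symmetry of $\omega$, to get
\[
\omega\big(d_i(x\cdot y)-d_i(x)\cdot y-x\cdot d_i(y),\,z\big)=\omega\big(y,\,[x,d_i(z)]+[d_i(x),z]-d_i[x,z]\big)=0 .
\]
Nondegeneracy of $\omega$ then gives the pre-Lie derivation property.

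Route (b) should be discarded; no cancellation coming from the symmetry of $\mathscr{B}_i$ will rescue it. Step 2 is an exact equivalence, so $\mathscr{B}_i$ is Hessian if and only if $d_i$ is a derivation of $\g^c$. The $L$-condition $d_i[x,y]=x\cdot d_i(y)-y\cdot d_i(x)$ differs from that by the term $d_i(x)\cdot y-d_i(y)\cdot x$. Worse, pair the $L$-condition with $z$ and use invariance and symmetry. It becomes $\mathscr{B}_i([x,y],z)+\mathscr{B}_i([x,z],y)=\mathscr{B}_i([y,z],x)$, whose left side is symmetric and right side antisymmetric in $(y,z)$. Hence, for $\omega$-symmetric $d_i$, the $L$-condition is equivalent to $d_i([\g,\g])=0$. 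For invertible $d_i$ this forces $\g^c$ to be abelian, and then invariance forces the pre-Lie product to vanish. So a literal reading with $(\g;L)$ would trivialize the theorem. Your suspicion about the definition is justified, and the paper's proof indeed never checks the $L$-condition.
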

\begin{proof}
Let $(d_1,d_2,d_3)$ be an  $\varepsilon$-hyper  differential operator on the pre-Lie algebra $\g$. Since $d_i$ for all $i=1,2,3$ are invertible symmetric endomorphisms, all $\mathscr{B}_i$ are non-degenerate and symmetric. Since $d_i$ are derivations on the pre-Lie algebra $\g$, by invariance of $\omega$,  we have
\begin{eqnarray*}
&&	\mathscr{B}_i(x\cdot y,z)-\mathscr{B}_i(x,y\cdot z)-\mathscr{B}_i(y\cdot x,z)+\mathscr{B}_i(y,x\cdot z)\\
&=&\omega(d_i[x,y],z)-\omega(d_i(x),y\cdot z)+\omega(d_i(y),x\cdot z)\\
&=&\omega(d_i[x,y],z)+\langle L_y^* \omega^\natural(d_i(x)),z\rangle -\langle L_x^* \omega^\natural(d_i(y)),z\rangle \\
&=&\omega(d_i[x,y]-[d_i(x),y]-[x,d_i(y)],z)=0,
\end{eqnarray*}
which implies that $\mathscr{B}_i$ are Hessian structures on the pre-Lie algebra $\g$.  Also, we have 
$N_i=(\mathscr{B}_{i-1}^\natural)^{-1}\circ \mathscr{B}_{i+1}^\natural=d_{i-1}^{-1}\circ d_{i+1}.$
Thus $(\mathscr{B}_1,\mathscr{B}_2,\mathscr{B}_3)$ is  an  $\varepsilon$-hyper Hessian structure. 

Conversely, since $N_i=(\mathscr{B}_{i-1}^\natural)^{-1}\circ \mathscr{B}_{i+1}^\natural=d_{i-1}^{-1}\circ d_{i+1}$, we only need to check that $d_i$ for all $i=1,2,3$ are derivations on the pre-Lie algebra $\g$. By the fact that $\mathscr{B}_i$ are Hessian structures on the pre-Lie algebra $\g$, we have shown that $d_i$ are the derivations on the Lie algebra $\g^c$. Furthermore, by invariance of $\omega$, we have 
\begin{eqnarray*}
 && \omega(d_i(x\cdot y)-d_i(x)\cdot y-x\cdot d_i(y),z) \\
 &=&\omega(d_i(z),x\cdot y)-\omega(d_i(x)\cdot y,z) -\omega(x\cdot d_i(y),z)\\
 &=&-\omega([x,d_i(z)], y)-\omega([d_i(x),z],y)+\omega(d_i(y),[x,z])\\
 &=&-\omega([x,d_i(z)]+[d_i(x),z]-d_i([x,z]), y)=0,
\end{eqnarray*}
which implies that $d_i$ for all $i=1,2,3$ are derivations on the pre-Lie algebra $\g$.
\end{proof}

By Theorem \ref{thm:sym} and Proposition \ref{pro:hyper-hes}, we have 
\begin{cor}
		Let $d_i$ be symmetric endomorphisms of $(\g,\omega)$ for all $i=1,2,3$. Then 
			\begin{itemize}
			\item[\rm (i)]$(d_1,d_2,d_3)$ is an  $\varepsilon$-hyper  differential operator on the pre-Lie algebra $\g$ with $\varepsilon=(-1,-1,-1)$ if and only if the quadruple $(\omega,I_1,I_2,I_3)$ is a hyper anti-K\"ahler structure on the pre-Lie algebra $\g$, where $\omega^\sharp=d_3\circ d_1^{-1}\circ d_2$ and $I_i=(d_{i-1})^{-1}\circ d_{i+1}$ for $i\in \mathbb{Z}_3$.
			\item[\rm (ii)]$(d_1,d_2,d_3)$ is an  $\varepsilon$-hyper  differential operator on the pre-Lie algebra with  $\varepsilon=(1,1,-1)$ if and only if the quadruple $(\omega,I_1,I_2,I_3)$ is a para-hyper anti-K\"ahler structure on the pre-Lie algebra $\g$, where $\omega^\sharp=-d_3\circ d_1^{-1}\circ d_2$ and $I_i=(d_{i-1})^{-1}\circ d_{i+1}$ for $i\in \mathbb{Z}_3$.
		\end{itemize}
\end{cor}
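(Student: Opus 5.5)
The corollary is obtained by chaining Theorem \ref{thm:sym} with Proposition \ref{pro:hyper-hes}, exactly as in the preceding corollary for the Lie-algebra case. We fix symmetric endomorphisms $d_i$ of $(\g,\omega)$ and put $\mathscr{B}_i(x,y)=\omega(d_i(x),y)$. Then $\mathscr{B}_i^\natural=\omega^\natural\circ d_i$, so the $\mathscr{B}_i$ are symmetric and nondegenerate precisely when the $d_i$ are invertible.

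Part (i) proceeds in three steps.

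\begin{enumerate}
\item Theorem \ref{thm:sym} says that $(d_1,d_2,d_3)$ is an $\varepsilon$-hyper differential operator on the pre-Lie algebra $\g$ with $\varepsilon=(-1,-1,-1)$ if and only if $(\mathscr{B}_1,\mathscr{B}_2,\mathscr{B}_3)$ is an $\varepsilon$-hyper Hessian structure with the same $\varepsilon$. This is because the Nijenhuis operators coincide:
\[
N_i=(\mathscr{B}_{i-1}^\natural)^{-1}\circ\mathscr{B}_{i+1}^\natural=d_{i-1}^{-1}\circ d_{i+1}.
\]
\item Proposition \ref{pro:hyper-hes}(i) says that this holds if and only if $(\omega',I_1,I_2,I_3)$ is a hyper anti-K\"ahler structure. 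Here $I_i=N_i$, and $\omega'$ is the form produced in the converse part of that proposition, namely the one with $\omega'^\natural=h^\flat=\mathscr{B}_3^\natural\circ(\mathscr{B}_1^\natural)^{-1}\circ\mathscr{B}_2^\natural$, using $\varepsilon_3\varepsilon_2=1$ in \eqref{eq:hyper-metric}.
\item Substituting $\mathscr{B}_i^\natural=\omega^\natural\circ d_i$ gives
\[
h^\flat=\omega^\natural\circ d_3\circ d_1^{-1}\circ d_2 .
\]
So $h^\flat$ is the operator $d_3\circ d_1^{-1}\circ d_2$ transported by $\omega^\natural$, which is how the formula $\omega^\sharp=d_3\circ d_1^{-1}\circ d_2$ in the statement should be read. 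Likewise $I_i=(d_{i-1})^{-1}\circ d_{i+1}$, since $\mathscr{B}_{i-1}^{\natural\,-1}\circ\mathscr{B}_{i+1}^\natural=d_{i-1}^{-1}(\omega^\natural)^{-1}\omega^\natural d_{i+1}$.
\end{enumerate}

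Part (ii) is identical, now using Proposition \ref{pro:hyper-hes}(ii) with $\varepsilon=(1,1,-1)$. The only change is the sign in $h^\flat=\varepsilon_3\varepsilon_2\,d_3T_1d_2$: since $\varepsilon_3\varepsilon_2=-1$, this gives the factor $-d_3\circ d_1^{-1}\circ d_2$.

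The main obstacle is bookkeeping rather than substance. One must check that the form $\omega'$ appearing in the anti-K\"ahler structure agrees with the data prescribed in the statement, and that the $I_i$ match the $N_i$ with the cyclic indexing $i\in\mathbb{Z}_3$. Both are handled by the identity $\mathscr{B}_i^\natural=\omega^\natural\circ d_i$ together with Theorem \ref{thm:equivalence}, which gives $I_3=I_1I_2=N_3$.
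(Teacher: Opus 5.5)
Your route is the paper's: there the corollary is justified only by citing Theorem \ref{thm:sym} and Proposition \ref{pro:hyper-hes}. You are also right to read the ill-typed formula $\omega^\sharp=d_3\circ d_1^{-1}\circ d_2$ as defining a new skew form $\omega'$ with $\omega'^\natural=\omega^\natural\circ d_3\circ d_1^{-1}\circ d_2$. Part~(i) goes through. Part~(ii), however, is not identical to (i) up to the sign of $h^\flat$. The bookkeeping you call automatic, namely that the $I_i$ match the $N_i$, fails there. The paper's one-line citation passes over this as well.

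Apply \eqref{eq:hdnproperty3} to the triple $\mathscr{B}_i^\natural=\omega^\natural\circ d_i$. This gives $h^\flat\circ N_i=\varepsilon_i\varepsilon_{i-1}\,\mathscr{B}_i^\natural$.
\begin{itemize}
\item For $\varepsilon=(-1,-1,-1)$ every sign is $+1$. So $\omega'(N_ix,y)=\mathscr{B}_i(x,y)$, and Proposition \ref{pro:hyper-hes}(i) links $(\omega',N_1,N_2,N_3)$ to exactly your triple.
\item For $\varepsilon=(1,1,-1)$ the signs are $(-1,+1,-1)$. With $\omega'^\natural=h^\flat$ and $I_i=N_i$, as the statement prescribes, the Hessian forms of the quadruple are $-\mathscr{B}_1,\ \mathscr{B}_2,\ -\mathscr{B}_3$, not the $\mathscr{B}_i$.
\end{itemize}
Equivalently, suppose $\mathscr{B}_i=\omega'(I_i\cdot,\cdot)$ with $I_1,I_2$ anticommuting para-complex structures and $I_3=I_1I_2$. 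Then $N_1=I_3^{-1}I_2=-I_1$, $N_2=I_2$ and $N_3=I_2^{-1}I_1=-I_3$. So running Proposition \ref{pro:hyper-hes}(ii) on $(\mathscr{B}_1,\mathscr{B}_2,\mathscr{B}_3)$ produces the quadruple $(\omega',-N_1,N_2,-N_3)$, not the one in the statement. This is already visible in linear algebra on $\mathbb{R}^2$: take $h^\flat=\Id$, $I_1=\mathrm{diag}(1,-1)$, $I_2$ the coordinate swap and $d_i=I_i$; then $h^\flat\circ N_1=-d_1$.

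The repair is short.
\begin{itemize}
\item Apply Proposition \ref{pro:hyper-hes}(ii) to the triple $(-\mathscr{B}_1,\mathscr{B}_2,-\mathscr{B}_3)$ instead.
\item Its Nijenhuis operators are $-N_1,\ N_2,\ -N_3$, which have the same squares. The Hessian conditions are invariant under $\mathscr{B}\mapsto-\mathscr{B}$. Hence this triple is a $(1,1,-1)$-hyper Hessian structure if and only if $(\mathscr{B}_1,\mathscr{B}_2,\mathscr{B}_3)$ is.
\item Its associated quadruple is exactly $(\omega',N_1,N_2,N_3)$. Here you use $N_1N_2=\varepsilon_1\varepsilon_2N_3=N_3$ from Proposition \ref{pro:pro2.8}(iii).
\item Then chain with Theorem \ref{thm:sym} as you do.
\end{itemize}

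Separately, in the converse directions, the identification $\omega'^\natural\circ I_i=\varepsilon_i\varepsilon_{i-1}\,\omega^\natural\circ d_i$ does not come from Theorem \ref{thm:equivalence}, whose converse assumes $d_i=h^\flat\circ I_i$. It comes from \eqref{eq:hdnproperty3}, which is pure linear algebra once $N_i^2=\varepsilon_i$. That in turn follows from the (para-)complex relations on $I_1,I_2$ together with $I_3=I_1I_2$ and $I_1I_2=-I_2I_1$.
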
	

\noindent
{\bf Acknowledgments.} 
JL was  supported by the NSFC (W2412041,12371029) and the National Key Research and Development Program of China 
(2021YFA1002000). SB was supported by the grant: NYUAD-065.

\end{document}